\newtheorem{dummytheorem}{Dummy-Theorem}[section]
\newcommand{\proofendsign}{$\Box$} 
\newtheorem{definition}[dummytheorem]{Definition}
\newtheorem{lemma}[dummytheorem]{Lemma}
\newtheorem{theorem}[dummytheorem]{Theorem}
\newtheorem{corollary}[dummytheorem]{Corollary}
\newenvironment{proof}{{\noindent \bf Proof }}
 {{\hspace*{\fill}\proofendsign\par\bigskip}}
\newtheorem{remarknorm}[dummytheorem]{Remark}
\newtheorem{examplenorm}[dummytheorem]{Example}
\newcommand{\U}{\mathbb{U}}
\newcommand{\N}{\mathbb{N}}
\newcommand{\Z}{\mathbb{Z}}
\newcommand{\R}{\mathbb{R}}
\newcommand{\pr}{\mathbb{P}}
\newcommand{\ex}{\mathbb{E}}
\newcommand{\vari}{\mathbb{V}{\rm ar}}
\newcommand{\covi}{\mathbb{C}{\rm ov}}
\newcommand{\eins}{\mathbbm{1}}
\newcommand{\vatr}{{\rm V@R}}
\newcommand{\avatr}{{\rm AV@R}}
\begin{document}


\title{A definition of qualitative robustness for general point estimators,\\ and examples}

\author{
Henryk Zähle\footnote{Department of Mathematics, Saarland University, Saarbrücken, Germany; {\tt zaehle@math.uni-sb.de}}}
\date{}
\maketitle

\begin{abstract}
A definition of qualitative robustness for point estimators in general statistical models is proposed. Some criteria for robustness are established and applied to estimators in parametric, semiparametric, and nonparametric models. In specific nonparametric models, the proposed definition boils down to Hampel robustness. It is also explained how plug-in estimators in certain nonparametric models can be reasonably classified w.r.t.\ their degrees of robustness.
\end{abstract}

{\bf Keywords:} Qualitative robustness $\cdot$ Dominated statistical model $\cdot$ Nonparametric statistical model $\cdot$ Strong mixing $\cdot$ Weak topology $\cdot$ $\psi$-weak topology $\cdot$ Linear process


\newpage

\section{Introduction}\label{Introduction}

Let $(\Theta,d_\Theta)$ be a metric space, where $\Theta$ will be regarded as a parameter space. Let $(\Omega,{\cal F})$ be a measurable space, and $\pr^\theta$ be any probability measure on $(\Omega,{\cal F})$ for every $\theta\in\Theta$. The set $\Omega$ can be seen as the sample space, where the sample is drawn from $\pr^\theta$ with (unknown) $\theta\in\Theta$. As usual, the triplet $(\Omega,{\cal F},\{\pr^\theta:\theta\in\Theta\})$ will be referred to as statistical model. Further, let $(\Sigma,{\cal S})$ be a measurable space and for every $n\in\N$ let $T_n:\Theta\to\Sigma$ be any map, where $T_n$ and $\Sigma$ can be regarded as an aspect function and the state space of the aspect function, respectively. For every $n\in\N$, let $\widehat T_n:\Omega\to\Sigma$ be any $({\cal F},{\cal S})$-measurable map, which can be seen as an estimator for the aspect $T_n(\theta)$ of $\theta$. Often the sample space and the estimator can be written as
\begin{equation}\label{def estimator in our setting}
    (\Omega,{\cal F})=(E^\N,{\cal E}^{\otimes\N})\quad\mbox{and}\quad\widehat T_n(x)=\widehat T_n(x_1,\ldots,x_n)\mbox{ for all }x=(x_1,x_2,\dots)\in\Omega
\end{equation}
for some measurable space $(E,{\cal E})$, which is virtually the standard statistical setting, but this particular form will not be assumed here. Finally, let $\rho$ be any metric on the set ${\cal M}_1(\Sigma)$ of all probability measures on $(\Sigma,{\cal S})$.

The following definition proposes a notion of (qualitative) robustness for the sequence of estimators $(\widehat T_n)$ which is in line with Hampel's notion of (qualitative) robustness. Note that the aspect functions $T_n$, $n\in\N$, do not play any role in the definition. They will only occur again in Section \ref{Sec Hampel Huber}.

\begin{definition}\label{def quali rob}
For any subset $\Theta_0\subset\Theta$ we use the following terminology.

(i) The sequence $(\widehat T_n)$ is said to be $(d_\Theta,\rho)$-robust on $\Theta_0$ if for every $\theta_1\in\Theta_0$ and $\varepsilon>0$ there is some $\delta>0$ such that
\begin{equation}\label{def quali rob - eq}
    \theta_2\in\Theta_0,\quad d_\Theta(\theta_1,\theta_2)\le\delta\quad\Longrightarrow\quad \rho(\pr^{\theta_1}\circ \widehat T_n^{-1}\,,\,\pr^{\theta_2}\circ \widehat T_n^{-1})\le\varepsilon \quad\mbox{for all }n\in\N.
\end{equation}

(ii) The sequence $(\widehat T_n)$ is said to be asymptotically $(d_\Theta,\rho)$-robust on $\Theta_0$ if for every $\theta_1\in\Theta_0$ and $\varepsilon>0$ there are some $\delta>0$ and $n_0\in\N$ such that
\begin{equation}\label{def quali rob - asymptotical}
    \theta_2\in\Theta_0,\quad d_\Theta(\theta_1,\theta_2)\le\delta\quad\Longrightarrow\quad \rho(\pr^{\theta_1}\circ \widehat T_n^{-1}\,,\,\pr^{\theta_2}\circ \widehat T_n^{-1})\le\varepsilon \quad\mbox{for all }n\ge n_0.
\end{equation}

(iii) The sequence $(\widehat T_n)$ is said to be finite-sample $(d_\Theta,\rho)$-robust on $\Theta_0$ if for every $\theta_1\in\Theta_0$, $\varepsilon>0$, and $n_0\in\N$ there is some $\delta>0$ such that
\begin{equation}\label{def quali rob - small sample}
    \theta_2\in\Theta_0,\quad d_\Theta(\theta_1,\theta_2)\le\delta\quad\Longrightarrow\quad \rho(\pr^{\theta_1}\circ \widehat T_n^{-1}\,,\,\pr^{\theta_2}\circ \widehat T_n^{-1})\le\varepsilon \quad\mbox{for all }1\le n\le n_0.
\end{equation}
\end{definition}

On the one hand, Definition \ref{def quali rob} is close to Hampel's definition of robustness in the context of nonparametric statistical models. Indeed, letting specifically $\Theta:=\Theta_0:={\cal M}_1(E)$ be the set of all probability measures on some complete and separable metric space $E$ (equipped with any metric $d_\Theta$ generating the weak topology), $(\Omega,{\cal F})$ be as in (\ref{def estimator in our setting}), $\widehat T_n$ be as in (\ref{def estimator in our setting}) and invariant against permutations of the arguments, $\pr^\mu:=\mu^{\otimes\N}$ for all $\mu\in\Theta$, $\Sigma:=\R^d$, and $\rho$ be the Prohorov metric, then part (i) of Definition \ref{def quali rob} coincides with the definition of robustness as given in Section 4 of \cite{Hampel1971}. Cuevas \cite{Cuevas1988} put forward Hampel's nonparametric theory by replacing $\R^d$ by a general complete and separable metric space $\Sigma$. Kr\"atschmer et al.\ \cite{Kraetschmeretal2012,Kraetschmeretal2014} considered metrics that metrize finer topologies than the weak topology, and Z\"ahle \cite{Zaehle2014b} allowed for laws $\pr^\mu$ that are not necessarily infinite product measures (for a different approach for nonparametric estimators based on dependent observations, see \cite{Boenteatal1987,Bustos1981,Cox1981,PapantoniKazakosGray1979,StrohrieglHable2014}). The distinction between asymptotic and finite-sample robustness was implicitly also done in \cite{Cuevas1988,Hampel1971}. Huber \cite{Huber1981} and other authors (e.g.\ \cite{Kraetschmeretal2012,Kraetschmeretal2014,Mizera2010}) regraded robustness simply as asymptotic robustness. Examples for robust estimators in nonparametric statistical models range from sample trimmed means \cite{Hampel1971} to L-estimators \cite{Huber1981} to Z- and M-estimators \cite{Hampel1971,Huber1981} to R-estimators \cite{Huber1981} to support vector machines \cite{HableChristmann2011}.

On the other hand, Definition \ref{def quali rob} allows for more statistical models $(\Omega,{\cal F},\{\pr^\theta:\theta\in\Theta\})$ than the one just discussed. In many classical examples of the theory of point estimation the parameter space $\Theta$ is a subset of $\R^k$ (and not the measure space ${\cal M}_1(E)$). The underlying statistical model has indeed often the shape $(E^\N,{\cal E}^{\otimes\N},\{\pr^\theta:\theta\in\Theta\})$ for some subset $\Theta\subset\R^k$. If $\pr^\theta=\mu_\theta^{\otimes\N}$ for some $\mu_\theta\in{\cal M}_1(E)$, $\theta\in\Theta$, then this model corresponds to the standard situation where one can observe the realizations of i.i.d.\ $E$-valued random variables with distribution $\mu_\theta$ but the true $k$-dimensional parameter vector $\theta$ is unknown; this setting is known as infinite product model. Robustness of the distribution of a given estimator w.r.t.\ small changes of the underlying model associated with $\theta$ is an obvious quality criterion, but it is not unique what the ``right'' notion of robustness is. In the mentioned infinite product model, for instance, a ``change'' of the underlying model can be measured in at least two ways. First, one may measure a change of $\theta$ w.r.t.\ the Euclidean distance on $\Theta$. Second, one may measure a change of the probability measure $\mu_{\theta}$ w.r.t.\ any metric on $\{\mu_\theta:\theta\in\Theta\}$ which metrizes the relative weak topology. The former approach is not covered by Hampel's theory, but it is covered by Definition \ref{def quali rob} and seems to be more natural in the context of classical parametric models (as, for instance, the Gaussian model where $\mu_\theta:={\rm N}_{m,s^2}$ for $\theta=(m,s^2)\in\Theta:=\R\times(0,\infty)$). The latter approach basically leads to a version of Hampel's definition when regarding $\widetilde\Theta:=\{\mu_\theta:\theta\in\Theta\}$ as the parameter space. But strictly speaking this approach is neither covered by the existing literature due to the traditional assumption $\widetilde\Theta={\cal M}_1(E)$. Definition \ref{def quali rob}, on the other hand, is more flexible and makes the second approach possible too.

Apart from the situation where $\Theta\subset\R^k$ (``parametric model''), the parameter space $\Theta$ is often the product of a subset of $\R^k$ and a subset of ${\cal M}_1(E)$ (``semiparametric model''). This is the case, for instance, in some parametric regression models, ARMA models, and so on. Then a change of the underlying model should be measured by any metric which metrizes the product topology on $\Theta$. In this situation the classical definition of robustness does not apply again, but Definition \ref{def quali rob} does.

The preceding discussion shows that Definition \ref{def quali rob} is suitable not only for nonparametric statistical models but also for parametric and semiparametric statistical models. In this sense, this article treats a rather general setting and facilitates more examples than the existing literature on robustness in nonparametric statistical models.

The article is organized as follows. Section \ref{Sec Hampel Huber} provides some criteria for asymptotic and finite-sample robustness in the fashion of the celebrated Hampel theorem. Section \ref{sec examples} is devoted to examples, and Section \ref{Proofs of the results of Section Hampel Huber} provides the proofs of the results of Section \ref{Sec Hampel Huber}. In Section \ref{Sec statistical functionals}, we investigate plug-in estimators in {\em nonparametric} statistical models being more general compared to \cite{Cuevas1988,Hampel1971,Kraetschmeretal2012,Zaehle2014b}, and we classify plug-in estimators on Euclidean spaces w.r.t.\ their degrees of robustness. Section \ref{Sec exponential models} provides results on robustness for estimators in dominated {\em parametric} statistical models, and Section \ref{Sec autoregressive process} is devoted to robustness of a Yule--Walker-type estimator in the {\em semiparametric} statistical model of a linear process. The Introduction will be completed with some basic remarks on Definition \ref{def quali rob}.

\begin{remarknorm}\label{def quali rob - remark}
When the metric $\rho$ is fixed, then $(d_\Theta,\rho)$-robustness of $(\widehat T_n)$ is clearly equivalent to $(d_\Theta',\rho)$-robustness of $(\widehat T_n)$ for any other metric $d_\Theta'$ which is equivalent to $d_\Theta$.
{\hspace*{\fill}$\Diamond$\par\bigskip}
\end{remarknorm}

\begin{remarknorm}
Of course, the sequence $(\widehat T_n)$ is robust on $\Theta_0$ if and only if it is both asymptotically and finite-sample robust on $\Theta_0$, and finite-sample robustness already holds when in (\ref{def quali rob - small sample}) the phrase ``for all $1\le n\le n_0$'' is replaced by ``for $n=n_0$''. Moreover, $(d_\Theta,\rho)$-robustness of $(\widehat T_n)$ on $\Theta_0$ means that the set of mappings 
$$
    \big\{\Theta\,\longrightarrow\,{\cal M}_1(\Sigma),~\theta\,\longmapsto\,\pr^\theta\circ\widehat T_n^{-1}~:~ n\in\N\big\}
$$
is $(d_\Theta,\rho)$-equicontinuous on $\Theta_0$.
{\hspace*{\fill}$\Diamond$\par\bigskip}
\end{remarknorm}

\begin{remarknorm}
In Definition \ref{def quali rob}, robustness of the sequence $(\widehat T_n)$ is a property which holds on the whole set $\Theta_0$. One could also define robustness at a fixed $\theta_1\in\Theta_0$ by requiring that (\ref{def quali rob - eq}) holds only for the fixed $\theta_1$; and analogously for asymptotic and finite-sample robustness. However, from a statistician's point of view, it is somewhat unsatisfying only to know that the estimator behaves robustly at a fixed parameter. After all the true parameter is unknown. For this reason, each of the conditions (\ref{def quali rob - eq})--(\ref{def quali rob - small sample}) is required to hold for all $\theta_1\in\Theta_0$.
{\hspace*{\fill}$\Diamond$\par\bigskip}
\end{remarknorm}

\begin{remarknorm}
Obviously, $(d_{\Theta}^1,\rho)$-robustness is a stronger requirement than $(d_{\Theta}^2,\rho)$-ro\-bustness when $d_{\Theta}^1\le d_{\Theta}^2$, and $(d_\Theta,\rho_1)$-robustness is a stronger requirement than $(d_\Theta,\rho_2)$-robustness when $\rho_1\ge\rho_2$. The same is true for asymptotic and finite- sample robustness.
{\hspace*{\fill}$\Diamond$\par\bigskip}
\end{remarknorm}


\section{Some criteria for robustness}\label{Sec Hampel Huber}

In this section, we will present some abstract criteria for asymptotic and finite-sample robustness. Take the notation introduced in Section \ref{Introduction}. Assume that $\Sigma$ is equipped with a complete and separable metric $d_\Sigma$, and that ${\cal S}$ is the corresponding Borel $\sigma$-field. In Theorem \ref{hampel-huber generalized} the metric $\rho$ will be chosen as the Prohorov metric $\rho_{\mbox{\scriptsize{\rm P}}}$. The Prohorov metric $\rho_{\mbox{\scriptsize{\rm P}}}$ on ${\cal M}_1(\Sigma)$ is defined by
\begin{equation}\label{def p metric}
    \rho_{\mbox{\scriptsize{\rm P}}}(\mu_1,\mu_2)\,:=\,\inf\{\varepsilon>0\,:\,\mu_1[A]\le\mu_2[A^\varepsilon]+\varepsilon\mbox{ for all }A\in{\cal S}\},
\end{equation}
where $A^\varepsilon:=\{s\in\Sigma:\,\inf_{a\in A}d_\Sigma(s,a)\le\varepsilon\}$. According to Theorem 2.14 in \cite{Huber1981} the Prohorov metric $\rho_{\mbox{\scriptsize{\rm P}}}$ metrizes the weak topology on ${\cal M}_1(\Sigma)$. The proofs of the following results can be found in Section \ref{Proofs of the results of Section Hampel Huber}.

First of all we will give a criterion for asymptotic robustness. To this end we will assume that the aspect function $T_n$ and estimator $\widehat T_n$ have the representations in (\ref{composition of estimator}) below. Let $(\Upsilon,d_{\Upsilon})$ be another complete and separable metric space equipped with the corresponding Borel $\sigma$-field ${\cal U}$. Let $U:\Theta\to\Upsilon$ and $V_n:\Upsilon\to\Sigma$, $n\in\N$, be any maps, and assume that $T_n$ and $\widehat T_n$ can be represented as
\begin{equation}\label{composition of estimator}
    T_n=V_n\circ U\quad\mbox{ and }\quad\widehat T_n=V_n\circ\widehat U_n
\end{equation}
for some $({\cal F},{\cal U})$-measurable map $\widehat U_n:\Omega\to\Upsilon$ for every $n\in\N$. Due to (\ref{composition of estimator}), Theorem \ref{hampel-huber generalized} below is applicable in the following two different situations. 1) On the one hand, when $T_n=T$ for all $n\in\N$, condition (b) of Theorem \ref{hampel-huber generalized} can sometimes be shown directly for $\widehat T_n$, $T$, $d_\Sigma$ (in place of $\widehat U_n$, $U$, $d_\Upsilon$). Then one can choose $\Upsilon:=\Sigma$, $U:=T$, $\widehat U_n:=\widehat T_n$, $V_n(u):=u$. This situation occurs, for instance, in dominated parametric statistical models; see Theorem \ref{hampel-huber generalized - parametric model - asymptotically} and its proof. 2) On the other hand, an empirical plug-in estimator has the shape $\widehat T_n=V_n(\widehat m_n)$ for an empirical probability measure $\widehat m_n$ and a possibly sample-size dependent statistical functional $V_n:\Theta\rightarrow\Sigma$, where $\Theta$ is any (nonparametric) subset of ${\cal M}_1(E)$ for some metric space $E$. It is known that for complete and separable $E$ condition (b) of Theorem \ref{hampel-huber generalized} holds for $\widehat U_n:=\widehat m_n$, $U(\theta):=\theta$, $d_\Upsilon:=$\,Prohorov metric. Thus, in this case one can choose $\Upsilon:=\Theta$, $U(\theta):=\theta$, $\widehat U_n:=\widehat m_n$, $V_n:=T_n$. This situation will be studied in detail in Section \ref{Sec statistical functionals}. A similar application of Theorem \ref{hampel-huber generalized} will be discussed in Section \ref{Sec autoregressive process}.

We will use the following terminology. The sequence $(V_n)$ is said to be {\em asymptotically continuous at $u_1\in\Upsilon$ if for every $\varepsilon>0$ there are some $\delta>0$ and $n_0\in\N$ such that
$$
    u_2\in\Upsilon,\quad d_\Upsilon(u_1,u_2)\le\delta\quad\Longrightarrow\quad d_\Sigma(V_n(u_1),V_n(u_2)\le\varepsilon \quad\mbox{for all }n\ge n_0.
$$
The sequence $(V_n)$ is said to be asymptotically continuous if it is asymptotically continuous at every $u_1\in\Upsilon$. Of course, if $V_n=V$ for all $n\in\N$, then asymptotic continuity of $(V_n)$ boils down to continuity of $V$. Moreover, equicontinuity of the family $\{V_n:n\in\N\}$ clearly implies asymptotic continuity of the sequence $(V_n)$.}

\begin{theorem}\label{hampel-huber generalized}
Assume that $\Sigma$ is equipped with a complete and separable metric $d_\Sigma$, and that ${\cal S}$ is the corresponding Borel $\sigma$-field. Let $\Theta_0\subset\Theta$. Assume that $T_n$ and $\widehat T_n$ are as in (\ref{composition of estimator}) and that the following two conditions hold:
\begin{itemize}
    \item[(a)] $U|_{\Theta_0}$ is $(d_\Theta,d_\Upsilon)$-continuous, and $(V_n)$ is asymptotically $(d_\Upsilon,d_\Sigma)$-continuous.
    \item[(b)] For every $\theta_1\in\Theta_0$, $\varepsilon>0$, and $\eta>0$ there are some $\delta>0$ and $n_0\in\N$ such that
    $$
        \theta_2\in\Theta_0,\quad d_\Theta(\theta_1,\theta_2)\le\delta\quad\Longrightarrow\quad \pr^{\theta_2}\big[\,d_\Upsilon(\widehat U_n,U(\theta_2))\ge\eta\,\big]\le\varepsilon \quad\mbox{for all }n\ge n_0.
    $$
\end{itemize}
Then the sequence $(\widehat T_n)$ is asymptotically $(d_\Theta,\rho_{\mbox{\scriptsize{\rm P}}})$-robust on $\Theta_0$.
\end{theorem}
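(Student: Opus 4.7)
The strategy is to show that, whenever $\theta_2\in\Theta_0$ is close to $\theta_1$, both laws $\pr^{\theta_i}\circ\widehat T_n^{-1}$ concentrate on the same small $d_\Sigma$-ball around the common deterministic point $p_n:=V_n(U(\theta_1))$; the Prohorov bound then follows directly from the definition (\ref{def p metric}). Roughly, condition (b) pushes $\widehat U_n$ close to $U(\theta_i)$ in $\Upsilon$, condition (a) first pushes $U(\theta_2)$ close to $U(\theta_1)$ and then converts closeness in $\Upsilon$ into closeness in $\Sigma$ through $V_n$, and the two effects compose via the representation $\widehat T_n=V_n(\widehat U_n)$.

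Concretely, fix $\theta_1\in\Theta_0$ and $\varepsilon>0$. By asymptotic continuity of $(V_n)$ at $u_1:=U(\theta_1)$, choose $\eta_0>0$ and $n_1\in\N$ such that $d_\Upsilon(u_1,u)\le\eta_0$ implies $d_\Sigma(V_n(u_1),V_n(u))\le\varepsilon/2$ for all $n\ge n_1$, and set $\eta:=\eta_0/2$. Use continuity of $U|_{\Theta_0}$ to obtain $\delta_1>0$ such that $d_\Theta(\theta_1,\theta_2)\le\delta_1$ forces $d_\Upsilon(U(\theta_1),U(\theta_2))\le\eta$. Use condition (b) with this $\eta$ and tolerance $\varepsilon/2$ to obtain $\delta_2>0$ and $n_2\in\N$ such that $\pr^{\theta_2}[d_\Upsilon(\widehat U_n,U(\theta_2))<\eta]\ge 1-\varepsilon/2$ whenever $d_\Theta(\theta_1,\theta_2)\le\delta_2$ and $n\ge n_2$; the special case $\theta_2=\theta_1$ gives the analogous concentration under $\pr^{\theta_1}$. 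Set $\delta:=\min(\delta_1,\delta_2)$ and $n_0:=\max(n_1,n_2)$.

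For such $\theta_2$ and $n\ge n_0$, on the concentration event under $\pr^{\theta_1}$ the choice of $\eta_0$ immediately yields $d_\Sigma(\widehat T_n,p_n)\le\varepsilon/2$, while under $\pr^{\theta_2}$ the triangle inequality $d_\Upsilon(\widehat U_n,u_1)\le\eta+\eta=\eta_0$ delivers the same bound; hence $\widehat T_n$ lies in the closed $d_\Sigma$-ball $B_n$ of radius $\varepsilon/2$ around $p_n$ with $\pr^{\theta_i}$-probability at least $1-\varepsilon/2$ for $i=1,2$. For any $A\in\mathcal{S}$, if $A\cap B_n=\emptyset$ then $\pr^{\theta_1}[\widehat T_n\in A]\le\varepsilon/2$, and if $A\cap B_n\ne\emptyset$ then $B_n\subset A^\varepsilon$ and consequently $\pr^{\theta_2}[\widehat T_n\in A^\varepsilon]\ge 1-\varepsilon/2$; in either case $\pr^{\theta_1}[\widehat T_n\in A]\le\pr^{\theta_2}[\widehat T_n\in A^\varepsilon]+\varepsilon$. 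A symmetric argument (or taking complements in the Prohorov condition) yields the reverse inequality, so $\rho_{\mbox{\scriptsize{\rm P}}}(\pr^{\theta_1}\circ\widehat T_n^{-1},\pr^{\theta_2}\circ\widehat T_n^{-1})\le\varepsilon$, as required.

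The only genuinely delicate point is the coordination of the tolerances: condition (b) localises $\widehat U_n$ around $U(\theta_2)$ rather than around $u_1$, so the asymptotic-continuity radius $\eta_0$ must be chosen twice as large as the concentration radius $\eta$ in order to absorb the drift $d_\Upsilon(U(\theta_1),U(\theta_2))\le\eta$ through the triangle inequality. Everything else is standard Prohorov bookkeeping, and nothing in the argument requires any additional structural hypothesis beyond completeness and separability of $(\Sigma,d_\Sigma)$.
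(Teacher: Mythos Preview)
Your proof is correct and follows the same overall strategy as the paper: both arguments show that the two laws $\pr^{\theta_i}\circ\widehat T_n^{-1}$ concentrate (with probability $\ge 1-\varepsilon/2$) in the $d_\Sigma$-ball of radius $\varepsilon/2$ around the deterministic point $T_n(\theta_1)=V_n(U(\theta_1))$, using condition (b) to control $\widehat U_n$ near $U(\theta_2)$, the continuity of $U$ to shift $U(\theta_2)$ to $U(\theta_1)$, and the asymptotic continuity of $(V_n)$ to pass from $\Upsilon$ to $\Sigma$. The coordination of tolerances (your $\eta_0=2\eta$) matches the paper's exactly.

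The one genuine difference is in how the Prohorov bound is extracted from this concentration. The paper routes through the triangle inequality $\rho_{\mbox{\scriptsize\rm P}}(\pr^{\theta_1}\circ\widehat T_n^{-1},\pr^{\theta_2}\circ\widehat T_n^{-1})\le \rho_{\mbox{\scriptsize\rm P}}(\pr^{\theta_1}\circ\widehat T_n^{-1},\delta_{T_n(\theta_1)})+\rho_{\mbox{\scriptsize\rm P}}(\delta_{T_n(\theta_1)},\pr^{\theta_2}\circ\widehat T_n^{-1})$ and then invokes Strassen's theorem (Theorem~\ref{strassens theorem}) to convert each concentration bound into a Prohorov bound of $\varepsilon/2$. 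You bypass Strassen entirely with the elementary dichotomy on whether $A$ meets $B_n$: if not, $\pr^{\theta_1}[\widehat T_n\in A]\le\varepsilon/2$; if so, the diameter bound gives $B_n\subset A^\varepsilon$ and hence $\pr^{\theta_2}[\widehat T_n\in A^\varepsilon]\ge 1-\varepsilon/2$. This is cleaner and self-contained, at the cost of being slightly ad hoc; the paper's use of Strassen is heavier machinery but makes the structure (comparison with a Dirac mass) more transparent and reusable. Either way the argument goes through, and your remark about symmetry is harmless but unnecessary, since the one-sided inequality in (\ref{def p metric}) already defines a metric.
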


\begin{remarknorm}\label{hampel-huber generalized - remark on b}
In the case where $\Upsilon=\Sigma$, $U=T$, $\widehat U_n=\widehat T_n$ and $V_n(u)=u$ for all $n\in\N$ it suffices to consider $\eta=\varepsilon/2$ in condition (b) of Theorem \ref{hampel-huber generalized}.
{\hspace*{\fill}$\Diamond$\par\bigskip}
\end{remarknorm}

\begin{remarknorm}\label{hampel-huber generalized - remark on variance}
In view of Markov's inequality, condition (b) in Theorem \ref{hampel-huber generalized} is fulfilled when for every $\theta_1\in\Theta_0$ and $\varepsilon>0$ there are some $\delta>0$ and $n_0\in\N$ such that
\begin{equation}\label{hampel-huber generalized - proof - 2 - sufficient condition}
    \theta_2\in\Theta_0,\quad d_\Theta(\theta_1,\theta_2)\le\delta\quad\Longrightarrow\quad \ex^{\theta_2}[\,d_\Upsilon(\widehat U_n,U(\theta_2))\,]\le\varepsilon \quad\mbox{for all }n\ge n_0.
\end{equation}
In the case where $\Upsilon\subset\R$ and $\widehat U_n$ is unbiased for $U(\theta)$ on $\Theta_0$ (that is, $\ex^\theta[\widehat U_n]=U(\theta)$ for all $\theta\in\Theta_0$) for every $n\in\N$, condition (b) in Theorem \ref{hampel-huber generalized} is fulfilled when for every $\theta_1\in\Theta_0$ and $\varepsilon>0$ there are some $\delta>0$ and $n_0\in\N$ such that
\begin{equation}\label{hampel-huber generalized - proof - 2 - sufficient condition - prime}
    \theta_2\in\Theta_0,\quad d_\Theta(\theta_1,\theta_2)\le\delta\quad\Longrightarrow\quad\vari^{\theta_2}[\widehat U_n]\le\varepsilon \quad\mbox{for all }n\ge n_0.
\end{equation}
In many specific situations condition (\ref{hampel-huber generalized - proof - 2 - sufficient condition - prime}) can be easily checked.
{\hspace*{\fill}$\Diamond$\par\bigskip}
\end{remarknorm}

To some extent the following theorem provides the converse of Theorem \ref{hampel-huber generalized}.

\begin{theorem}\label{hampel-huber generalized - reversed}
Assume that $\Sigma$ is equipped with a complete and separable metric $d_\Sigma$, and that ${\cal S}$ is the corresponding Borel $\sigma$-field. Let $\rho$ be any metric that metrizes the weak topology on ${\cal M}_1(\Sigma)$. Let $\Theta_0\subset\Theta$, and assume that the following two conditions hold:
\begin{itemize}
    \item[($\alpha$)] $(\widehat T_n)$ is asymptotically $(d_\Theta,\rho)$-robust on $\Theta_0$.
    \item[($\beta$)] There exists a map $T_0:\Theta_0\rightarrow\Sigma$ such that $\lim_{n\to\infty}\,\pr^\theta[d_\Sigma(\widehat T_n,T_0(\theta))\ge\eta]=0$ for every $\theta\in\Theta_0$ and $\eta >0$.
\end{itemize}
Then $T_0$ is $(d_\Theta,d_\Sigma)$-continuous.
\end{theorem}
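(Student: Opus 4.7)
The plan is to fix $\theta_1\in\Theta_0$ and show $(d_\Theta,d_\Sigma)$-continuity of $T_0$ at $\theta_1$ by ``transferring'' the asymptotic $\rho$-robustness statement from the laws $\pr^\theta\circ\widehat T_n^{-1}$ to their limits $\delta_{T_0(\theta)}$, exploiting that $\rho$ metrizes the weak topology on ${\cal M}_1(\Sigma)$.

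First, I would record two standard facts. (i) Since $\widehat T_n$ converges in $\pr^\theta$-probability to the constant $T_0(\theta)$ by condition ($\beta$), the image measures converge weakly, $\pr^\theta\circ\widehat T_n^{-1}\Rightarrow\delta_{T_0(\theta)}$, and hence, because $\rho$ metrizes the weak topology,
\[
  \lim_{n\to\infty}\rho\bigl(\pr^\theta\circ\widehat T_n^{-1}\,,\,\delta_{T_0(\theta)}\bigr)\;=\;0\qquad\mbox{for every }\theta\in\Theta_0.
\]
(ii) The map $\Sigma\to{\cal M}_1(\Sigma)$, $s\mapsto\delta_s$, is a homeomorphism onto its image (equipped with the weak topology): $\delta_{s_k}\Rightarrow\delta_{s_1}$ if and only if $d_\Sigma(s_k,s_1)\to 0$. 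In particular, given $\varepsilon'>0$, there is some $\varepsilon>0$ such that
\[
  \rho\bigl(\delta_{T_0(\theta_1)},\delta_s\bigr)\;\le\;\varepsilon\quad\Longrightarrow\quad d_\Sigma\bigl(T_0(\theta_1),s\bigr)\;\le\;\varepsilon'
\]
for every $s\in\Sigma$.

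Now, given $\varepsilon'>0$, pick $\varepsilon>0$ as in (ii). By asymptotic $(d_\Theta,\rho)$-robustness on $\Theta_0$, there are $\delta>0$ and $n_0\in\N$ such that $d_\Theta(\theta_1,\theta_2)\le\delta$ with $\theta_2\in\Theta_0$ implies $\rho(\pr^{\theta_1}\circ\widehat T_n^{-1},\pr^{\theta_2}\circ\widehat T_n^{-1})\le\varepsilon$ for all $n\ge n_0$. Applying the triangle inequality,
\[
  \rho\bigl(\delta_{T_0(\theta_1)},\delta_{T_0(\theta_2)}\bigr)\;\le\;\rho\bigl(\delta_{T_0(\theta_1)},\pr^{\theta_1}\circ\widehat T_n^{-1}\bigr)+\varepsilon+\rho\bigl(\pr^{\theta_2}\circ\widehat T_n^{-1},\delta_{T_0(\theta_2)}\bigr),
\]
and letting $n\to\infty$ the outer two terms vanish by (i), so $\rho(\delta_{T_0(\theta_1)},\delta_{T_0(\theta_2)})\le\varepsilon$. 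By the choice of $\varepsilon$ this yields $d_\Sigma(T_0(\theta_1),T_0(\theta_2))\le\varepsilon'$, which is exactly $(d_\Theta,d_\Sigma)$-continuity of $T_0$ at $\theta_1$.

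There is no serious obstacle here: the only points that need care are the two standard facts cited in (i) and (ii), both of which follow directly from the assumption that $\rho$ metrizes the weak topology and from the Portmanteau-type characterization of weak convergence to a Dirac measure (convergence in probability to a constant $\Longleftrightarrow$ weak convergence of the law to the Dirac at that constant). The rest is just a triangle-inequality limit argument.
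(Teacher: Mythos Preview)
Your argument is correct and follows essentially the same route as the paper's proof: both use the triangle inequality for $\rho$ to split $\rho(\delta_{T_0(\theta_1)},\delta_{T_0(\theta_2)})$ into three pieces, bound the middle one by asymptotic robustness, and let the outer two vanish via condition~($\beta$) and the fact that $\rho$ metrizes the weak topology. The only cosmetic difference is that the paper writes $\rho(\delta_{T_0(\theta_1)},\delta_{T_0(\theta_2)})=\min\{d_\Sigma(T_0(\theta_1),T_0(\theta_2));1\}$ directly (implicitly thinking of the Prohorov metric), whereas you handle the passage from $\rho$-closeness of Diracs to $d_\Sigma$-closeness more carefully via the homeomorphism $s\mapsto\delta_s$, which is arguably the cleaner way to treat a general $\rho$.
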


In the following Theorem \ref{hampel-huber generalized - finite sample} we will give a criterion for finite-sample robustness. We again assume that the estimator $\widehat T_n$ has a certain decomposition, but the decomposition is different from those in (\ref{composition of estimator}). More precisely, we will assume that for every $n\in\N$ the estimator $\widehat T_n$ can be represented as a composition
\begin{equation}\label{composition of estimator - new}
    \widehat T_n=\widehat t_n\circ\Pi_n
\end{equation}
of two measurable maps $\Pi_n:\Omega\to\Omega_n$ and $\widehat t_n:\Omega_n\to\Sigma$, where $(\Omega_n,d_n)$ is some complete and separable metric space equipped with the Borel $\sigma$-field ${\cal F}_n$. Moreover, $\rho_n$ will refer to any metric on the set ${\cal M}_1(\Omega_n)$ of all probability measures on $(\Omega_n,{\cal F}_n)$ which metrizes the weak topology. The decomposition (\ref{composition of estimator - new}) is motivated by the setting where $\Omega$ is an infinite product space, $\Pi_n$ is the projection on the first $n$ coordinates and $\widehat{t}_n$ is to some extent the estimator $\widehat T_n$ itself (provided it depends only on the first $n$ coordinates); see Example \ref{hampel-huber generalized - finite sample - example} below for more details.

\begin{theorem}\label{hampel-huber generalized - finite sample}
Assume that $\Sigma$ is equipped with a complete and separable metric $d_\Sigma$, and that ${\cal S}$ is given by the corresponding Borel $\sigma$-field. Let $\rho$ be any metric that metrizes the weak topology on ${\cal M}_1(\Sigma)$. Assume that for every $n\in\N$ the estimator $\widehat T_n$ has the decomposition (\ref{composition of estimator - new}) as described above. Let $\Theta_0\subset\Theta$, and assume that the following two conditions hold:
\begin{itemize}
    \item[(c)] $\Omega_n\ni\omega_n\mapsto\widehat t_n(\omega_n)$ is $(d_n,d_\Sigma)$-continuous for every $n\in\N$.
    \item[(d)] $\Theta_0\ni\theta\mapsto\pr^\theta\circ\Pi_n^{-1}$ is $(d_\Theta,\rho_{n})$-continuous for every $n\in\N$.
\end{itemize}
Then the sequence $(\widehat T_n)$ is finite-sample $(d_\Theta,\rho)$-robust on $\Theta_0$.
\end{theorem}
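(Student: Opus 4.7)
The plan is to establish, for each fixed $n\in\N$, that the map
$$
    \Theta_0\,\longrightarrow\,{\cal M}_1(\Sigma),\qquad \theta\,\longmapsto\,\pr^{\theta}\circ\widehat T_n^{-1}
$$
is $(d_\Theta,\rho)$-continuous, and then to exploit the finiteness of $\{1,\dots,n_0\}$ in order to pick a single $\delta$ that works for all $n\le n_0$ simultaneously.

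First I would rewrite the law of the estimator via the decomposition (\ref{composition of estimator - new}) as
$$
    \pr^{\theta}\circ\widehat T_n^{-1}\,=\,\pr^{\theta}\circ\Pi_n^{-1}\circ\widehat t_n^{-1},
$$
so that the full law is the pushforward of $\pr^{\theta}\circ\Pi_n^{-1}$ under $\widehat t_n$. Since $(\Omega_n,d_n)$ and $(\Sigma,d_\Sigma)$ are complete and separable and $\widehat t_n$ is $(d_n,d_\Sigma)$-continuous by (c), the continuous mapping theorem ensures that the pushforward operator $\nu\mapsto\nu\circ\widehat t_n^{-1}$ is continuous from $({\cal M}_1(\Omega_n),\rho_n)$ to $({\cal M}_1(\Sigma),\rho)$ (both $\rho_n$ and $\rho$ metrize the weak topology by assumption). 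Condition (d) says that $\theta\mapsto\pr^{\theta}\circ\Pi_n^{-1}$ is itself $(d_\Theta,\rho_n)$-continuous on $\Theta_0$. Composing these two continuous maps yields the desired $(d_\Theta,\rho)$-continuity of $\theta\mapsto\pr^{\theta}\circ\widehat T_n^{-1}$ on $\Theta_0$, for each fixed $n$.

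Now fix $\theta_1\in\Theta_0$, $\varepsilon>0$ and $n_0\in\N$. For each $n\in\{1,\dots,n_0\}$ the continuity just established at the point $\theta_1$ yields some $\delta_n>0$ such that $\theta_2\in\Theta_0$ with $d_\Theta(\theta_1,\theta_2)\le\delta_n$ implies $\rho(\pr^{\theta_1}\circ\widehat T_n^{-1},\pr^{\theta_2}\circ\widehat T_n^{-1})\le\varepsilon$. Setting $\delta:=\min_{1\le n\le n_0}\delta_n>0$ gives the condition (\ref{def quali rob - small sample}) uniformly in $1\le n\le n_0$, which is exactly finite-sample $(d_\Theta,\rho)$-robustness on $\Theta_0$.

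There is no real obstacle here beyond invoking the continuous mapping theorem correctly: since the set of sample sizes $\{1,\dots,n_0\}$ is finite, no uniformity in $n$ is needed and only pointwise continuity of each law-map at $\theta_1$ matters. The contrast with the asymptotic setting of Theorem \ref{hampel-huber generalized} is precisely that one no longer needs the asymptotic continuity or a quantitative concentration bound on $\widehat U_n$, which is why (c) and (d) alone suffice.
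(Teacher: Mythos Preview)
Your proof is correct and follows essentially the same route as the paper: both arguments reduce finite-sample robustness to the $(d_\Theta,\rho)$-continuity of $\theta\mapsto\pr^\theta\circ\widehat T_n^{-1}$ for each fixed $n$, and obtain this via the Continuous Mapping theorem applied to the decomposition $\widehat T_n=\widehat t_n\circ\Pi_n$. The only cosmetic difference is that the paper argues via sequential continuity and invokes the earlier remark that it suffices to handle a single $n$ at a time, whereas you phrase the Continuous Mapping theorem as continuity of the pushforward operator and then take $\delta=\min_{1\le n\le n_0}\delta_n$ explicitly.
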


\begin{examplenorm}\label{hampel-huber generalized - finite sample - example}
To illustrate the setting of Theorem \ref{hampel-huber generalized - finite sample}, let $(E,d_E)$ be a complete and separable metric space and ${\cal E}$ be the corresponding Borel $\sigma$-field. Set $(\Omega,{\cal F}):=(E^\N,{\cal E}^{\otimes\N})$ as well as $(\Omega_n,{\cal F}_n):=(E^n,{\cal E}^{\otimes n})$ for every $n\in\N$. For every $n\in\N$, equip the $n$-fold product space $E^n:=\times_{i=1}^nE$ with the metric
\begin{equation}\label{def metric on En}
    d_{n}(x^{n},y^{n})\,:=\,d_{E^n}(x^{n},y^{n})\,:=\,\max_{1\le i\le n}\,d_E(x_i^{n},y_i^{n}),\qquad x^n,y^n\in E^n
\end{equation}
which metrizes the product topology. Since $(E,d_E)$ was assumed to be complete and separable, the same is true for $(\Omega_n,d_n)=(E^n,d_{E^n})$. Let $X_i$ be the $i$-th coordinate projection on $E^\N$ (that is, $X_i(x_1,x_2,\ldots):=x_i$), and note that $\pr^\theta\circ(X_1,\ldots,X_n)^{-1}$ is an element of ${\cal M}_1(E^n)$ for every $\theta\in\Theta$. If $\widehat T_n$ is as in (\ref{def estimator in our setting}), then we may represent $\widehat T_n$ as $\widehat t_n\circ\Pi_n$ for $\widehat t_n(x_1,\ldots,x_n):=\widehat T_n(x_1,x_2,\ldots)$ and $\Pi_n:=(X_1,\ldots,X_n)$.
{\hspace*{\fill}$\Diamond$\par\bigskip}
\end{examplenorm}


\section{Examples}\label{sec examples}

\subsection{Plug-in estimators in nonparametric statistical models}\label{Sec statistical functionals}

In this section we will revisit classical plug-in estimators, which are the objects in the classical literature on robustness. Theorem \ref{hampel-huber generalized - plug-in estimator} below provides a generalization of the classical Hampel theorem. For $\psi\equiv 1$ it is indeed a version of Theorem 1 in \cite{Hampel1971} and of Theorems 1--2 in \cite{Cuevas1988}. Moreover, Theorem \ref{hampel-huber generalized - plug-in estimator - strong mixing} provides a version of Hampel's theorem for strongly mixing observations. Robustness under strong mixing has been already investigated in \cite{Zaehle2014b}, but the conditions of Theorem \ref{hampel-huber generalized - plug-in estimator - strong mixing} are significantly weaker than the conditions imposed in \cite{Zaehle2014b}. Indeed, in (\ref{hampel-huber generalized - plug-in estimator - strong mixing - eq}) the right-hand side is not required to hold uniformly in $\mu_2$ on $\Theta_0$ (as in \cite{Zaehle2014b}) but only for $\mu_2$ close to $\mu_1$. Also note that Theorems \ref{hampel-huber generalized - plug-in estimator} and \ref{hampel-huber generalized - plug-in estimator - strong mixing} take into account an aspect that was not considered in \cite{Cuevas1988,Hampel1971}. The possible restriction to a subset $\Theta_0$ of the domain of the statistical functionals $T_n$, $n\in\N$, enables us to introduce a finer notion of robustness and to compare estimators w.r.t.\ their ``degrees'' of robustness; for details see Subsections \ref{sec examples - Refinement}--\ref{sec examples - Degree} below.

Let $(E,d_E)$ be a complete and separable metric space and ${\cal E}$ be the corresponding Borel $\sigma$-field. 
Set $(\Omega,{\cal F}):=(E^\N,{\cal E}^{\otimes\N})$ and let $X_i$ be the $i$-th coordinate projection on $\Omega=E^\N$. Let $\Theta$ be any subset of ${\cal M}_1(E)$ such that $\mathfrak{E}_n\subset\Theta$ for all $n\in\N$, where $\mathfrak{E}_n:=\{\widehat m_n(x_1,\ldots,x_n):x_1,\ldots,x_n\in E\}$ is the set of all empirical probability measures $\widehat m_n(x_1,\ldots,x_n):=\frac{1}{n}\sum_{i=1}^n\delta_{x_i}$ of order $n\in\N$. For every $\mu\in\Theta$, let $\pr^\mu$ be a probability measure on $(\Omega,{\cal F})$ such that $\pr^\mu\circ X_i^{-1}=\mu$ for all $i\in\N$.  This means that the coordinate projections $X_1,X_2,\ldots$ are identically distributed random variables with distribution $\mu$ under $\pr^\mu$ for every $\mu\in\Theta$. Let $(\Sigma,d_\Sigma)$ be a complete and separable metric space and ${\cal S}$ be the corresponding Borel $\sigma$-field. Let $T_n:\Theta\to\Sigma$ be any map and assume that the map $\widehat T_n:\Omega\to\Sigma$ defined by
\begin{equation}\label{Sec statistical functionals - def}
    \widehat T_n(x)\,=\,\widehat T_n(x_1,\ldots,x_n)\,:=\,T_n(\widehat m_n(x_1,\ldots,x_n)),\quad x=(x_1,x_2,\ldots)\in\Omega
\end{equation}
is $({\cal E}^{\otimes\N},{\cal S})$-measurable for every $n\in\N$.

In \cite{Cuevas1988,Hampel1971} and many other references $\Theta$ and $d_\Theta$ were chosen to be respectively ${\cal M}_1(E)$ and any metric generating the weak topology. This implies in particular that the classical Hampel theorem yields $(d_\Theta,\rho_{\mbox{\scriptsize{\rm P}}})$-robustness of $(\widehat T_n)=(T(\widehat m_n))$ only for sequences of statistical functionals $(T_n)$ that are asymptotically continuous w.r.t.\ the weak topology (and well defined) on ${\cal M}_1(E)$. On the other hand, there are many relevant sequences of statistical functionals $(T_n)$ that are {\em not} asymptotically weakly continuous. And the distributions of the plug-in estimators of two sequences of statistical functionals that are not asymptotically weakly continuous may react quite different to changes in the underlying (marginal) distribution, just as these plug-in estimators may have quite different influence functions. For this reason the authors of \cite{Kraetschmeretal2012,Kraetschmeretal2014,Zaehle2014b} allowed for metrics $d_\Theta$ that metrizes finer topologies than the relative weak topology.

It was discussed in \cite{Kraetschmeretal2012,Kraetschmeretal2014,Zaehle2014b} that the so-called (relative) $\psi$-weak topology (cf.\ Subection \ref{sec examples - psi weak topology} below) is a suitable topology in this context. The crucial point is that many relevant sequences of statistical functionals $(T_n)$ are not asymptotically continuous w.r.t.\ the weak topology but can be shown to be asymptotically continuous w.r.t.\ the $\psi$-weak topology for some suitable $\psi$ depending on $(T_n)$. In the case where $d_\Theta^\psi$ metrizes the relative {\em $\psi$-weak} topology and $T=T_n$ is continuous w.r.t.\ the relative $\psi$-weak topology, asymptotic $(d_\Theta^\psi,\rho_{\mbox{\scriptsize{\rm P}}})$-robustness of $(\widehat T_n)$ could be proven for so-called uniformly $\psi$-integrating sets $\Theta_0$ (cf.\ Definition \ref{def of uniformly psi integrating}). Lemma \ref{weak and psi weak topology} will show that in this case one even gets (asymptotic) $(d_\Theta,\rho_{\mbox{\scriptsize{\rm P}}})$-robustness of $(\widehat T_n)$ on such $\Theta_0$ for any metric $d_\Theta$ generating the relative {\em weak} topology. Indeed, the lemma shows that the locally uniformly $\psi$-integrating sets (cf.\ Definition \ref{def of uniformly psi integrating}) are exactly those sets on which the relative $\psi$-weak topology and the relative weak topology coincide.


\subsubsection{The $\psi$-weak topology and locally uniformly $\psi$-integrating sets}\label{sec examples - psi weak topology}

Let $\psi: E\to[1,\infty)$ be a continuous function. Let ${\cal M}_1^\psi(E)$ be the set of all probability measures $\mu$ on $(E,{\cal E})$ satisfying $\int\psi\,d\mu<\infty$, and $C_\psi(E)$ be the set of all continuous functions on $E$ for which $\|f/\psi\|_\infty<\infty$, where $\|\cdot\|_\infty$ is the sup-norm. The $\psi$-weak topology on ${\cal M}_1^\psi(E)$ is defined to be the coarsest topology for which all mappings $\mu\mapsto\int f\,d\mu$, $f\in C_\psi(E)$, are continuous; cf.\ Section A.6 in \cite{FoellmerSchied2011}. Clearly, the $\psi$-weak topology is finer than the weak topology, and the two topologies coincide if and only if $\psi$ is bounded. Note that $\mu_n\to\mu$ $\psi$-weakly if and only if $\int fd\mu_n\to \int fd\mu$ for all $f\in C_\psi(E)$. Moreover, the $\psi$-weak topology is metrizable by
\begin{equation}\label{Def d psi}
    d_\psi(\mu_1,\mu_2):=d_{\mbox{\scriptsize{\rm w}}}(\mu_1,\mu_2)+\Big|\int\psi\,d\mu_1-\int\psi\,d\mu_2\Big|
\end{equation}
for any metric $d_{\mbox{\scriptsize{\rm w}}}$ which metrizes the weak topology. In the following definition, $d_{\Theta_0}$ refers to any metric on $\Theta_0$ $(\subset{\cal M}_1(E)$) which metrizes the relative weak topology.

\begin{definition}\label{def of uniformly psi integrating}
A set $\Theta_0\subset{\cal M}_1(E)$ is said to be locally uniformly $\psi$-integrating if for every $\mu_1\in\Theta_0$ and $\varepsilon>0$ there exist some $\delta>0$ and $a>0$ such that
$$
    \mu_2\in\Theta_0,\quad d_{\Theta_0}(\mu_1,\mu_2)\le\delta\quad\Longrightarrow\quad \int \psi\,\eins_{\{\psi\ge a\}}\,d\mu_2\,\le\,\varepsilon.
$$
It is said to be uniformly $\psi$-integrating if for every $\varepsilon>0$ there exists some $a>0$ such that
$$
    \sup_{\mu\in\Theta_0}\int \psi\,\eins_{\{\psi\ge a\}}\,d\mu\,\le\,\varepsilon.
$$
\end{definition}

Of course, any uniformly $\psi$-integrating set $\Theta_0$ is also locally uniformly $\psi$-integrating, and any locally uniformly $\psi$-integrating set $\Theta_0$ is a subset of ${\cal M}_1^\psi(E)$. The following two lemmas characterize (locally) uniformly $\psi$-integrating sets. The equivalence of the first three conditions in Lemma \ref{characterization of relative psi weak compactness} is already known from Corollary A.47 in \cite{FoellmerSchied2011}. Recall that $E$ was assumed to be a complete and separable metric space.

\begin{lemma}\label{characterization of relative psi weak compactness}
Assume that each set $\{\psi\le n\}$, $n\in\N$, is relatively compact in $E$. Then, for any $\Theta_0\subset{\cal M}_1^\psi(E)$, the following conditions are equivalent:
\begin{itemize}
    \item[(i)] $\Theta_0$ is relatively compact in ${\cal M}_1^\psi(E)$ for the $\psi$-weak topology.
    \item[(ii)] For every $\varepsilon>0$ there is a compact subset $K\subset E$ such that $\sup_{\mu\in\Theta_0}\int\psi\eins_{K^{\sf c}}\,d\mu\le \varepsilon$.
    \item[(iii)] There is a measurable function $\phi:E\to[1,\infty]$ such that each set $\{\phi/\psi\le n\}$, $n\in\N$, is relatively compact in $E$, and such that $\sup_{\mu\in\Theta_0}\int\phi\,d\mu<\infty$.
    \item[(iv)] $\Theta_0$ is uniformly $\psi$-integrating.
\end{itemize}
\end{lemma}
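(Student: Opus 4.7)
The plan is to take the equivalence (i)$\Leftrightarrow$(ii)$\Leftrightarrow$(iii) as given by Corollary A.47 in \cite{FoellmerSchied2011} (which uses the standing assumption that each $\{\psi\le n\}$ is relatively compact in $E$) and only to establish the missing link with (iv). The natural bridge is (ii)$\Leftrightarrow$(iv), since both conditions express uniform smallness of $\int\psi\eins_A\,d\mu$ outside a ``large'' set: in (ii) the set is a compact $K$, in (iv) it is a sublevel set of $\psi$. Under the compactness assumption on the sublevel sets $\{\psi\le n\}$, the two kinds of ``large'' sets can be interchanged.

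For (ii)$\Rightarrow$(iv) I would fix $\varepsilon>0$, choose a compact $K\subset E$ with $\sup_{\mu\in\Theta_0}\int\psi\eins_{K^{\sf c}}\,d\mu\le\varepsilon$, and set $M:=\max_{x\in K}\psi(x)$, which is finite because $\psi$ is continuous and $K$ is compact. Then for any $a>M$ one has $\{\psi\ge a\}\subset K^{\sf c}$, so
\[
\int\psi\,\eins_{\{\psi\ge a\}}\,d\mu\,\le\,\int\psi\,\eins_{K^{\sf c}}\,d\mu\,\le\,\varepsilon\qquad\text{for all }\mu\in\Theta_0,
\]
which is (iv). Conversely, for (iv)$\Rightarrow$(ii), given $\varepsilon>0$ pick $a>0$ as in (iv) and consider $K:=\{\psi\le a\}$. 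Since $\psi$ is continuous, $K$ is closed, and since $K\subset\{\psi\le\lceil a\rceil\}$ with the latter set relatively compact by assumption, $K$ is a closed subset of a compact set, hence compact. Because $K^{\sf c}=\{\psi>a\}\subset\{\psi\ge a\}$, the estimate
\[
\sup_{\mu\in\Theta_0}\int\psi\,\eins_{K^{\sf c}}\,d\mu\,\le\,\sup_{\mu\in\Theta_0}\int\psi\,\eins_{\{\psi\ge a\}}\,d\mu\,\le\,\varepsilon
\]
gives (ii).

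There is no real obstacle here; the proof is a short exchange between the compact set $K$ in (ii) and the threshold $a$ in (iv), mediated by the continuity of $\psi$ and the relative compactness of the sublevel sets. The only point requiring a moment of care is the passage from ``relatively compact'' in the hypothesis to ``compact'' for the set $K=\{\psi\le a\}$ needed in (ii), which is handled by observing that a closed subset of a relatively compact set is compact.
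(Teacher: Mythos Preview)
Your proof is correct and follows the same overall strategy as the paper: cite Corollary A.47 in \cite{FoellmerSchied2011} for (i)$\Leftrightarrow$(ii)$\Leftrightarrow$(iii), then establish (ii)$\Leftrightarrow$(iv) via the compactness of sublevel sets of $\psi$. The only difference is that for (iv)$\Rightarrow$(ii) you give a direct construction (taking $K=\{\psi\le a\}$), whereas the paper argues by contradiction; your version is slightly cleaner but rests on the same observation that $\{\psi\le a\}$ is closed and contained in a compact set.
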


\begin{remarknorm}
In Lemma \ref{characterization of relative psi weak compactness}, we still have (i)$\Leftrightarrow$(ii)$\Leftrightarrow$(iii)$\Rightarrow$(iv) when the sets $\{\psi\le n\}$, $n\in\N$, are not necessarily relatively compact in E.
{\hspace*{\fill}$\Diamond$\par\bigskip}
\end{remarknorm}

\begin{proof}{\bf (of Lemma \ref{characterization of relative psi weak compactness})}
(i)$\Leftrightarrow$(ii)$\Leftrightarrow$(iii) is already known from Corollary A.47 in \cite{FoellmerSchied2011}, where one should note that $\sup_{\mu\in\Theta_0}\int\psi\,d\mu<\infty$ is  automatically implied by (ii) because $\Theta_0$ consists of {\em probability} measures only.

(ii)$\Rightarrow$(iv): Pick $\varepsilon>0$. By assumption (ii), we can choose a compact subset $K\subset E$ such that $\sup_{\mu\in\Theta_0}\int\psi\eins_{K^{\sf c}}\,d\mu\le \varepsilon$. In particular, $\sup_{\mu\in\Theta_0}\int \psi\eins_{\{\psi\ge a\}}\,d\mu$ is bounded above by $\sup_{\mu\in\Theta_0}\int \psi\eins_{\{\psi\ge a\}}\eins_{K}\,d\mu\,+\,\varepsilon$ for every $a>0$. Since $\psi$ as a continuous function is bounded on $K$, we can choose $a>0$ such that $\eins_{\{\psi\ge a\}}\eins_{K}=0$. Thus, (iv) holds.

(iv)$\Rightarrow$(ii): Suppose that condition (ii) is violated. Then there exists some $\varepsilon>0$ such that for every compact subset $K\subset E$ we can find a probability measure $\mu_K\in\Theta_0$ such that $\int \psi\eins_{K^{\sf c}}\,d\mu_K>\varepsilon$. In particular, for every $n\in\N$ we can find a probability measure $\mu_n\in\Theta_0$ such that $\int \psi\eins_{\{\psi>n\}}\,d\mu_n>\varepsilon$ for all $n\in\N$; note that the sets $\{\psi\le n\}$, $n\in\N$, are compact in $E$ (they are relatively compact by assumption and contain all of their limit points by the continuity of $\psi$). This contradicts condition (iv).
\end{proof}

\begin{lemma}\label{weak and psi weak topology}
Let $\Theta_0\subset{\cal M}_1^\psi(E)$. Then $\Theta_0$ is locally uniformly $\psi$-integrating if and only if the relative weak topology and the relative $\psi$-weak topology on $\Theta_0$ coincide.
\end{lemma}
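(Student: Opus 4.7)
Since $E$ is Polish, the weak topology on ${\cal M}_1(E)$ is metrizable, and $d_\psi$ from (\ref{Def d psi}) metrizes the $\psi$-weak topology on ${\cal M}_1^\psi(E)$; hence the two relative topologies on $\Theta_0$ coincide iff they admit the same convergent sequences. From the definition of $d_\psi$ we have $d_\psi\ge d_{\mbox{\scriptsize{\rm w}}}$, so the $\psi$-weak topology is always at least as fine as the weak topology on $\Theta_0$. The task therefore reduces to characterising when weak convergence $\mu_n\to\mu_1$ in $\Theta_0$ already forces $\int\psi\,d\mu_n\to\int\psi\,d\mu_1$, and the plan is to relate this to Definition \ref{def of uniformly psi integrating} via a standard truncation of $\psi$ at level $a$.

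\textbf{Sufficiency.} Assume $\Theta_0$ is locally uniformly $\psi$-integrating and let $\mu_n\to\mu_1$ weakly in $\Theta_0$. Given $\varepsilon>0$, pick $\delta,a>0$ as in Definition \ref{def of uniformly psi integrating} for $\mu_1$. For $n$ large enough, $d_{\Theta_0}(\mu_1,\mu_n)\le\delta$, so $\int\psi\,\eins_{\{\psi\ge a\}}\,d\mu_n\le\varepsilon$, and taking $\mu_2=\mu_1$ in the definition gives $\int\psi\,\eins_{\{\psi\ge a\}}\,d\mu_1\le\varepsilon$ as well. Split $\psi=(\psi\wedge a)+(\psi-a)^+$ and note $(\psi-a)^+\le\psi\,\eins_{\{\psi\ge a\}}$. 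Since $\psi\wedge a$ is bounded and continuous, $\int(\psi\wedge a)\,d\mu_n\to\int(\psi\wedge a)\,d\mu_1$ by weak convergence, and therefore $\limsup_n|\int\psi\,d\mu_n-\int\psi\,d\mu_1|\le 2\varepsilon$. As $\varepsilon>0$ was arbitrary, $\int\psi\,d\mu_n\to\int\psi\,d\mu_1$, which combined with weak convergence is precisely $d_\psi(\mu_n,\mu_1)\to 0$.

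\textbf{Necessity and main obstacle.} Conversely, assume the two topologies coincide on $\Theta_0$ but, for contradiction, that local uniform $\psi$-integrability fails at some $\mu_1\in\Theta_0$. Then there exist $\varepsilon_0>0$ and $\mu_n\in\Theta_0$ with $d_{\Theta_0}(\mu_1,\mu_n)\le 1/n$ and $\int\psi\,\eins_{\{\psi\ge n\}}\,d\mu_n>\varepsilon_0$. By the coincidence hypothesis, $\mu_n\to\mu_1$ also in $d_\psi$, so $\int\psi\,d\mu_n\to\int\psi\,d\mu_1<\infty$; subtracting $\int(\psi\wedge a)\,d\mu_n\to\int(\psi\wedge a)\,d\mu_1$ yields, for every fixed $a>0$, the convergence $\int(\psi-a)^+\,d\mu_n\to\int(\psi-a)^+\,d\mu_1$. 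The delicate step -- and the main obstacle -- is to uniformise in $n$: by dominated convergence pick $a_0$ with $\int(\psi-a_0)^+\,d\mu_1<\varepsilon_0/6$, then use the above convergence to find $N$ with $\int(\psi-a_0)^+\,d\mu_n<\varepsilon_0/3$ for all $n\ge N$, and enlarge $a_0$ to some $a^*\ge a_0$ so that the finitely many indices $n<N$ also satisfy $\int(\psi-a^*)^+\,d\mu_n<\varepsilon_0/3$ (which is possible because each $\mu_n$ is in ${\cal M}_1^\psi(E)$ individually). The pointwise estimate $\psi\,\eins_{\{\psi\ge 2a^*\}}\le 2(\psi-a^*)^+$ then gives $\sup_n\int\psi\,\eins_{\{\psi\ge 2a^*\}}\,d\mu_n\le 2\varepsilon_0/3<\varepsilon_0$, which contradicts $\int\psi\,\eins_{\{\psi\ge n\}}\,d\mu_n>\varepsilon_0$ as soon as $n\ge 2a^*$, completing the proof.
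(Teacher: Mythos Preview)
Your proof is correct and takes a genuinely different route from the paper's. You exploit metrizability of both topologies to reduce the question to equality of convergent sequences, and you use the continuous truncation $\psi=(\psi\wedge a)+(\psi-a)^+$ throughout. The paper instead works directly with neighborhood bases: for sufficiency it shows that inside any $\psi$-weak basic neighborhood $U_{\varepsilon,0}^\psi(\mu;f_1,\ldots,f_n)$ one can fit a weak neighborhood, by truncating the test functions $f_i$ rather than $\psi$; for necessity it exhibits a specific $\psi$-weak neighborhood $U_{\varepsilon,0}^\psi(\mu;\psi)$ that no weak $\delta$-ball can enter, splitting $\int\psi$ via the discontinuous truncation $\psi\eins_{\{\psi\le a_\varepsilon\}}$ and invoking the Portmanteau theorem at a continuity point $a_\varepsilon$ of $\mu\circ\psi^{-1}$. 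Your sequential argument is more streamlined---the continuous truncation avoids the continuity-point selection entirely, and the contradiction in the necessity step (deriving uniform $\psi$-integrability of the offending sequence from $\psi$-weak convergence) is perhaps more transparent than the paper's direct neighborhood construction. The paper's approach, on the other hand, makes the topological content more explicit and would transfer to non-metrizable settings without modification.
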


\begin{proof}
It is easily seen that the sets
$$
    U_\varepsilon^\psi(\mu;f_1,\ldots,f_n)\,:=\,\bigcap_{i=1}^n\Big\{\nu\in{\cal M}_1^\psi(E):\Big|\int f_i\,d\nu-\int f_i\,d\mu\Big|<\varepsilon\Big\}
$$
for $\varepsilon>0$, $n\in\N$, and $f_1,\ldots,f_n\in C_\psi(E)$ form a basis for the neighborhoods of $\mu\in{\cal M}_1^\psi(E)$ for the $\psi$-weak topology on ${\cal M}_1^\psi(E)$. It follows that the family $\U_0^\psi(\mu)$ of sets $U_{\varepsilon,0}^\psi(\mu;f_1,\ldots,f_n):=U_\varepsilon^\psi(\mu;f_1,\ldots,f_n)\cap \Theta_0$ for $\varepsilon>0$, $n\in\N$, and $f_1,\ldots,f_n\in C_\psi(E)$ provides a basis for the neighborhoods of $\mu\in\Theta_0$ for the relative $\psi$-weak topology on $\Theta_0$. In the case where $\psi\equiv 1$, we write $C_{\sf b}(E)$, $U_{\varepsilon,0}(\mu;f_1,\ldots,f_n)$, and $\U_0(\mu)$ instead of $C_\psi(E)$, $U_{\varepsilon,0}^\psi(\mu;f_1,\ldots,f_n)$, and $\U_0^\psi(\mu)$, respectively.

First, assume that $\Theta_0$ is locally uniformly $\psi$-integrating. Obviously, the relative $\psi$-weak topology is finer than the relative weak topology on $\Theta_0$. So it suffices to show that for every $\mu\in\Theta_0$ and $U^\psi\in\U_0^\psi(\mu)$ there exists some $U\in\U_0(\mu)$ such that $U\subset U^\psi$. Let $\mu\in\Theta_0$ and $U^\psi=U_{\varepsilon,0}^\psi(\mu;f_1,\ldots,f_n)\in\U_0^\psi(\mu)$. For every $i=1,\ldots,n$ there exists some constant $c_i>0$ such that $|f_i|\le c_i\,\psi$. Set $c:=\max_{1\le i\le n}c_i$. Since $\Theta_0$ is locally uniformly $\psi$-integrating, we can choose some $\delta_\varepsilon>0$ and $a_\varepsilon>0$ such that for every $\nu\in\Theta_0$ with $d_{\Theta_0}(\mu,\nu)<\delta_\varepsilon$ we have that $\int \psi\,\eins_{\{c\psi\ge a_\varepsilon\}}\,d\nu<\varepsilon/(4c)$. Set $f_{i,\varepsilon}:=f_i\eins_{\{-a_\varepsilon< f_i<a_\varepsilon\}}+a_\varepsilon\eins_{\{f_i\ge a_\varepsilon\}}-a_\varepsilon\eins_{\{f_i\le -a_\varepsilon\}}$ and note that $f_{i,\varepsilon}\in C_{\sf b}(E)$. Then, for every $\nu\in\Theta_0$ with $d_{\Theta_0}(\mu,\nu)<\delta_\varepsilon$ and $i=1,\ldots,n$,
\begin{eqnarray*}
    \Big|\int f_i\,d\nu-\int f_i\,d\mu\Big|
    & \le & \Big|\int f_{i,\varepsilon}\,d\nu-\int f_{i,\varepsilon}\,d\mu\Big|\,+\,2c\sup_{\pi\in\Theta_0:\,d_{\Theta_0}(\mu,\pi)<\delta_\varepsilon}\int \psi\eins_{\{c\psi \ge a_\varepsilon\}}\,d\pi.
\end{eqnarray*}
The latter summand is bounded above by $\varepsilon/2$ by the choice of $\delta_\varepsilon$ and $a_\varepsilon$. It follows that
$$
    U\,:=\,U_{\widetilde\varepsilon,0}(\mu;f_{1,\varepsilon},\ldots,f_{n,\varepsilon},\widetilde f_{1,\widetilde\varepsilon},\ldots,\widetilde f_{\widetilde n,\widetilde\varepsilon})\,\subset\, U_{\varepsilon,0}^\psi(\mu;f_1,\ldots,f_n)\,=\,U^\psi,
$$
where $\widetilde\varepsilon\in(0,\varepsilon/2]$, $\widetilde n\in\N$, and $\widetilde f_{1,\widetilde\varepsilon},\ldots,\widetilde f_{\widetilde n,\widetilde\varepsilon}\in C_{\sf b}(E)$ are chosen such that the neighborhood $U_{\widetilde\varepsilon,0}(\mu;\widetilde f_{1,\widetilde\varepsilon},\ldots,\widetilde f_{\widetilde n,\widetilde\varepsilon})$ of $\mu$ is contained in the open $d_{\Theta_0}$-ball around $\mu$ with radius $\delta_\varepsilon$. Since $U\in\U_0(\mu)$, we have shown that the relative weak topology and the relative $\psi$-weak topology on $\Theta_0$ coincide.

Next, assume that the relative weak topology and the relative $\psi$-weak topology on $\Theta_0$ coincide. Suppose that $\Theta_0$ is not locally uniformly $\psi$-integrating. Then we can find some $\varepsilon>0$ and $\mu,\mu_1,\mu_2,\ldots\in\Theta_0$ such that $d_{\Theta_0}(\mu_n,\mu)\to 0$ and $\int\psi\eins_{\{\psi\ge n\}}\,d\mu_n>3\varepsilon$ for all $n\in\N$. We will show that this implies that there does not exist any $\delta>0$ such that the open $d_{\Theta_0}$-ball around $\mu$ with radius $\delta>0$ is contained in $U_{\varepsilon,0}^\psi(\mu;\psi)$. This in turn implies that we cannot find any neighborhood of $\mu$ for the relative weak topology which is contained in the neighborhood $U_{\varepsilon,0}^\psi(\mu;\psi)$ of $\mu$ for the relative $\psi$-weak topology. This contradicts the assumption. Since $\int\psi\,d\mu<\infty$, we can choose $a_\varepsilon>0$ such that $\int\psi\eins_{\{\psi>a_\varepsilon\}}\,d\mu<\varepsilon$. Since $\mu\circ\psi^{-1}$ as a probability measure on the real line has at most countably many atoms, there are at most countably many different $a>0$ with $\mu[\psi=a]>0$. In particular, we may assume $\mu[\psi=a_\varepsilon]=0$. Then
\begin{eqnarray*}
    \lefteqn{\int\psi\,d\mu_n-\int\psi\,d\mu}\\
    & = & \Big(\int\psi\eins_{\{\psi> a_\varepsilon\}}\,d\mu_n-\int\psi\eins_{\{\psi> a_\varepsilon\}}\,d\mu\Big)+\Big(\int\psi\eins_{\{\psi\le a_\varepsilon\}}\,d\mu_n-\int\psi\eins_{\{\psi\le a_\varepsilon\}}\,d\mu\Big)\\
    & =: & S_1(\varepsilon,n)\,+\,S_2(\varepsilon,n).
\end{eqnarray*}
Since $d_{\Theta_0}(\mu_n,\mu)\to 0$, the Portmanteau theorem ensures that we can find some $n_\varepsilon\in\N$ such that $|S_2(\varepsilon,n)|<\varepsilon$ for all $n\ge n_\varepsilon$. By the choice of $(\mu_n)$ and $a_\varepsilon$, we can also find some $n_\varepsilon'\ge n_\varepsilon$ such that $S_1(\varepsilon,n)>2\varepsilon$ for all $n\ge n_\varepsilon'$. That is, $|\int\psi\,d\mu_n-\int\psi\,d\mu|>\varepsilon$ for all $n\ge n_\varepsilon'$. Thus, there is indeed no $\delta>0$ such that the open $d_{\Theta_0}$-ball around $\mu$ with radius $\delta>0$ is contained in $U_{\varepsilon,0}^\psi(\mu;\psi)$. This completes the proof.
\end{proof}


\subsubsection{General criteria for robustness}\label{sec examples - General criteria}

Let $\psi:E\to[1,\infty)$ be a continuous function and $d_\psi$ be the metric defined in (\ref{Def d psi}). Recall that $d_\psi$ generates the $\psi$-weak topology. Let $d_\Theta$ refer to any metric which generates the relative weak topology on $\Theta$ ($\subset{\cal M}_1(E)$) and let $\widehat T_n$ be defined by (\ref{Sec statistical functionals - def}). For every $n\in\N$ let the map $\widehat t_n:E^n\to\Sigma$  be defined by
$$
    \widehat t_n(x_1,\ldots,x_n)\,:=\,\widehat T_n(x_1,\ldots,x_n)\,:=\,T_n(\widehat m_n(x_1,\ldots,x_n)),\quad x=(x_1,\ldots,x_n)\in E^n
$$
and let $d_n=d_{E_n}$ be the metric on $E^n$ which was defined in (\ref{def metric on En}). Moreover, let $\rho_{\mbox{\scriptsize{\rm P}}}$ be the Prohorov metric on ${\cal M}_1(\Sigma)$ as defined in (\ref{def p metric}) and $\rho$ be an arbitrary metric on ${\cal M}_1(\Sigma)$ which metrizes the weak topology.

\begin{theorem}\label{hampel-huber generalized - plug-in estimator}
Take the notation from above, and assume that $\Theta\subset{\cal M}_1^\psi(E)$ and that $\pr^\mu=\mu^{\otimes\N}$ for every $\mu\in\Theta$. Then the following three assertions hold:
\begin{itemize}
    \item[(i)] If the sequence $(T_n)$ is asymptotically $(d_\psi,d_\Sigma)$-continuous, then $(\widehat T_n)$ is asymptotically $(d_\Theta,\rho_{\mbox{\scriptsize{\rm P}}})$-robust on every locally uniformly $\psi$-integrating set $\Theta_0\subset\Theta$.
    \item[(ii)] If $\widehat t_n:E^n\to\Sigma$ is $(d_n,d_\Sigma)$-continuous for every $n\in\N$, then $(\widehat T_n)$ is finite-sample $(d_\Theta,\rho)$-robust on $\Theta$.
    \item[(iii)] Assume that $(\widehat T_n)$ is asymptotically $(d_\Theta,\rho)$-robust on some $\Theta_0\subset\Theta$ and that there is a map $T_0:\Theta_0\rightarrow\Sigma$ such that $\lim_{n\to\infty}\pr^\mu[d_\Sigma(\widehat T_n,T_0(\mu))\ge\eta]=0$ for all $\eta>0$ and $\mu\in\Theta_0$. Then $T_0$ is $(d_\Theta,d_\Sigma)$-continuous.
\end{itemize}
\end{theorem}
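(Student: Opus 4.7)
The plan is to derive each of the three parts as an application of the corresponding abstract result from Section~\ref{Sec Hampel Huber}, combined with Lemma~\ref{weak and psi weak topology}.

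For part~(i), I would invoke Theorem~\ref{hampel-huber generalized} with $\Upsilon:=\Theta$, $d_\Upsilon:=d_\psi$, $U(\mu):=\mu$, $\widehat U_n:=\widehat m_n$, and $V_n:=T_n$, so that $T_n=V_n\circ U$ and $\widehat T_n=V_n\circ\widehat U_n$; the inclusion $\widehat m_n\in\mathfrak{E}_n\subset\Theta$ guarantees that $\widehat U_n$ indeed takes values in $\Upsilon$. Condition~(a) of that theorem splits into two halves: asymptotic $(d_\psi,d_\Sigma)$-continuity of $(V_n)=(T_n)$ is the hypothesis of~(i), while $(d_\Theta,d_\psi)$-continuity of the identity $U:\Theta_0\to\Theta$ follows from Lemma~\ref{weak and psi weak topology}, since local uniform $\psi$-integrability of $\Theta_0$ forces the relative weak and relative $\psi$-weak topologies on $\Theta_0$ to coincide. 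The substantive step is condition~(b), which amounts to showing that for every $\mu_1\in\Theta_0$, $\varepsilon>0$ and $\eta>0$ there exist $\delta>0$ and $n_0$ with $\pr^{\mu_2}[d_\psi(\widehat m_n,\mu_2)\ge\eta]\le\varepsilon$ for all $\mu_2\in\Theta_0$ with $d_\Theta(\mu_1,\mu_2)\le\delta$ and all $n\ge n_0$. I would first pass, by contradiction, to a $d_\Theta$-neighborhood of $\mu_1$ that is itself uniformly $\psi$-integrating: otherwise one extracts $\mu_2^{(n)}\in\Theta_0$ with $d_\Theta(\mu_1,\mu_2^{(n)})<1/n$ and $\int\psi\,\eins_{\{\psi\ge n\}}\,d\mu_2^{(n)}>\varepsilon$, contradicting local uniform $\psi$-integrability of $\Theta_0$ at $\mu_1$. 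On such a neighborhood I would then bound $d_\psi(\widehat m_n,\mu_2)\le d_{\mathrm w}(\widehat m_n,\mu_2)+|\int\psi\,d\widehat m_n-\int\psi\,d\mu_2|$ and control each summand uniformly in $\mu_2$: the $\psi$-integral piece is handled by truncating $\psi$ at level $a$, using uniform $\psi$-integrability for the tail and a Chebyshev/variance bound for the bounded remainder, while the Prohorov piece is handled by a uniform Varadarajan-type statement on the (uniformly tight) neighborhood.

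Part~(ii) follows from Theorem~\ref{hampel-huber generalized - finite sample} in the concrete i.i.d.\ setup of Example~\ref{hampel-huber generalized - finite sample - example}, with $(\Omega_n,d_n):=(E^n,d_{E^n})$, $\Pi_n:=(X_1,\ldots,X_n)$, and $\widehat t_n$ as in the text. Condition~(c) is exactly the hypothesis of~(ii); condition~(d) asks for weak continuity of $\mu\mapsto\pr^\mu\circ\Pi_n^{-1}=\mu^{\otimes n}$ from $\Theta$ into ${\cal M}_1(E^n)$, which is the classical fact that weak convergence is preserved under formation of finite product measures (verify on continuous bounded product-form test functions and extend). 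Part~(iii) is a direct specialization of Theorem~\ref{hampel-huber generalized - reversed}: hypotheses $(\alpha)$ and $(\beta)$ of that theorem match the assumptions of~(iii) verbatim, with $\rho$ metrizing the weak topology on ${\cal M}_1(\Sigma)$, and its conclusion is exactly the desired $(d_\Theta,d_\Sigma)$-continuity of $T_0$.

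The main obstacle will lie in condition~(b) of part~(i), namely the uniform convergence $\widehat m_n\to\mu_2$ in the $d_\psi$-metric over a $d_\Theta$-neighborhood of $\mu_1$. The reduction to a uniformly $\psi$-integrating neighborhood is a routine compactness argument, and the $\psi$-integral piece of $d_\psi$ is controlled cleanly by truncation plus Chebyshev; the delicate ingredient is the uniform Prohorov convergence, which becomes straightforward once uniform tightness of the neighborhood is in hand but may need extra care if no relative compactness of the level sets $\{\psi\le n\}$ is a priori assumed.
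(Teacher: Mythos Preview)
Your proposal is correct and follows essentially the same route as the paper, with one organizational difference and one unnecessary worry.

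For part~(i), the paper applies Lemma~\ref{weak and psi weak topology} at the outset to replace the target statement by asymptotic $(d_\psi,\rho_{\mbox{\scriptsize{\rm P}}})$-robustness on $\Theta_0$; Theorem~\ref{hampel-huber generalized} is then invoked with $d_\psi$ as the metric on the parameter space, so that $U=\mathrm{id}$ is trivially $(d_\psi,d_\psi)$-continuous and the lemma is not needed again for condition~(a). Your variant---keeping $d_\Theta$ on the parameter side and using the lemma to check $(d_\Theta,d_\psi)$-continuity of $U|_{\Theta_0}$---is equivalent.

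The point you flag as ``the main obstacle'' is in fact not one. The paper dispatches the Prohorov half of condition~(b) in one line by citing Lemma~4 of \cite{Mizera2010}, which states
\[
\lim_{n\to\infty}\,\sup_{\mu\in{\cal M}_1(E)}\,\pr^\mu\big[d_{\mbox{\scriptsize{\rm P}}}(\widehat m_n,\mu)\ge\eta\big]\,=\,0\quad\mbox{for all }\eta>0,
\]
uniformly over \emph{all} of ${\cal M}_1(E)$. No tightness of a neighborhood and no relative compactness of the level sets $\{\psi\le n\}$ is required. Your preliminary ``pass to a uniformly $\psi$-integrating neighborhood'' is therefore unnecessary for the Prohorov piece; for the $\psi$-integral piece one simply uses local uniform $\psi$-integrability directly to pick $\delta$ and $a$ (depending on the given $\varepsilon,\eta$), then splits and bounds by Markov and Chebyshev exactly as you outline. (Your phrasing ``uniformly $\psi$-integrating neighborhood'' is slightly stronger than what local uniform $\psi$-integrability guarantees, since the definition allows $\delta$ to depend on $\varepsilon$; but you only ever use it for the fixed $\varepsilon,\eta$ at hand, so the argument goes through.)

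Parts~(ii) and~(iii) match the paper verbatim.
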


Note that the assumption $\pr^\mu=\mu^{\otimes\N}$ means that the observations $X_1,X_2,\ldots$ are i.i.d.\ according to $\mu$ under $\pr^\mu$, and that $d_\psi$ can be replaced by any other metric which generates the $\psi$-weak topology. Also note that for $E=\R$ and a fixed functional $T$ ($=T_n$ for all $n\in\N$), part (i) of Theorem \ref{hampel-huber generalized - plug-in estimator} with ``asymptotically $(d_\Theta,\rho_{\mbox{\scriptsize{\rm P}}})$-robust'' and ``locally uniformly'' replaced by respectively ``asymptotically $(d_\psi,\rho_{\mbox{\scriptsize{\rm P}}})$-robust'' and ``uniformly'' is basically already known from Theorem 3.2 in \cite{Kraetschmeretal2014}.

\bigskip

\begin{proof}{\bf (of Theorem \ref{hampel-huber generalized - plug-in estimator})}
(i) Let $\Theta_0$ be a  locally uniformly $\psi$-integrating set. Then, by Lemma \ref{weak and psi weak topology}, asymptotic $(d_\Theta,\rho_{\mbox{\scriptsize{\rm P}}})$-robustness on $\Theta_0$ is equivalent to asymptotic $(d_\psi,\rho_{\mbox{\scriptsize{\rm P}}})$-robustness on $\Theta_0$. To verify asymptotic $(d_\psi,\rho_{\mbox{\scriptsize{\rm P}}})$-robustness, it suffices to show that conditions (a)--(b) in Theorem \ref{hampel-huber generalized} hold for $(\Upsilon,d_\Upsilon):=(\Theta,d_\psi)$, $U(\theta):=\theta$, $V_n:\equiv T_n$, and $\widehat U_n(x_1,x_2,\ldots):=\widehat m_n(x_1,\ldots,x_n)$. Condition (a) holds by assumption and the choice of $U$. To verify condition (b) we assume without loss of generality that the metric $d_{\mbox{\scriptsize{\rm w}}}$ in (\ref{Def d psi}) is given by the Prohorov metric $d_{\mbox{\scriptsize{\rm P}}}$, i.e.\ $d_\psi(\mu_1,\mu_2)=d_{\mbox{\scriptsize{\rm P}}}(\mu_1,\mu_2)+|\int\psi\,d\mu_1-\int\psi\,d\mu_2|$. Lemma 4 in \cite{Mizera2010} says
$$
    \lim_{n\to\infty}\,\sup_{\mu\in{\cal M}_1(E)}\,\pr^\mu\big[d_{\mbox{\scriptsize{\rm P}}}(\widehat m_n,\mu)\ge\eta\big]\,=\,0\quad\mbox{ for all }\eta>0.
$$
So it remains to show that that for every $\mu_1\in\Theta_0$, $\varepsilon>0$, and $\eta>0$ there are some $\delta>0$ and $n_0\in\N$ such that
\begin{equation}\label{hampel-huber generalized - plug-in estimator - proof - 10}
    \mu_2\in\Theta_0,\quad d_\psi(\mu_1,\mu_2)\le\delta\quad\Longrightarrow\quad \pr^{\mu_2}\Big[\,\Big|\int\psi\,d\widehat m_n-\int\psi\,d\mu_2\Big|\ge\eta\,\Big]\le\varepsilon \quad\mbox{for all }n\ge n_0.
\end{equation}
To prove (\ref{hampel-huber generalized - plug-in estimator - proof - 10}), fix $\mu_1\in\Theta_0$, $\varepsilon>0$, and $\eta>0$. Since $\Theta_0$ is locally uniformly $\psi$-integrating, we find some $\delta>0$ and $a>0$ such that $\int\psi\eins_{\{\psi\ge a\}} d\mu_2<\min\{\eta/3;\eta\varepsilon/6\}$ for all $\mu_2\in\Theta_0$ with $d_{\mbox{\scriptsize{\rm P}}}(\mu_1,\mu_2)\le\delta$. For every $\mu_2\in\Theta_0$ with $d_{\mbox{\scriptsize{\rm P}}}(\mu_1,\mu_2)\le\delta$ we then obtain
\begin{eqnarray}
    \pr^{\mu_2}\Big[\Big|\int\psi\,d\widehat m_n-\int\psi\,d\mu_2\Big|\ge\eta\Big]
    & \le & \pr^{\mu_2}\Big[\int\psi\eins_{\{\psi\ge a\}}\,d\widehat m_n\ge\frac{\eta}{3}\Big]\nonumber\\
    & & +\,\pr^{\mu_2}\Big[\,\Big|\int\psi\eins_{\{\psi<a\}}\,d\widehat m_n-\int\psi\eins_{\{\psi<a\}}\,d\mu_2\Big|\ge\frac{\eta}{3}\,\Big]\nonumber\\
    & & +\,\pr^{\mu_2}\Big[\int\psi\eins_{\{\psi\ge a\}}\,d\mu_2\ge\frac{\eta}{3}\Big]\nonumber\\
    & =: & S_1(n,a)+S_2(n,a)+S_3(a),\nonumber
\end{eqnarray}
where $S_3(a)=0$ and $S_1(n,a)\le(3/\eta)\int \psi\eins_{\{\psi\ge a\}}d\mu_2\le\varepsilon/2$ for all $n\in\N$ (by Markov's inequality). Further, by Chebychev's inequality we can find some $n_0\in\N$ such that $S_2(n,a)\le\varepsilon/2$ for all $n\ge n_0$ (and all $\mu_2\in{\cal M}_1(E)$). This proves (\ref{hampel-huber generalized - plug-in estimator - proof - 10}) with $d_\psi$ replaced by $d_{\mbox{\scriptsize{\rm P}}}$. Since $d_{\mbox{\scriptsize{\rm P}}}\le d_\psi$, we arrive at (\ref{hampel-huber generalized - plug-in estimator - proof - 10}).

(ii) Let $(\Omega_n,{\cal F}_n):=(E^n,{\cal E}^{\otimes n})$ for every $n\in\N$. Equip the $n$-fold product space $\Omega_n:=E^n$ with the metric $d_n=d_{E^n}$ and note that the corresponding Borel $\sigma$-field coincides with ${\cal E}^{\otimes n}$. Note that $\pr^\mu\circ(X_1,\ldots,X_n)^{-1}$ is an element of ${\cal M}_1(E^n)$ for every $\mu\in\Theta$. To verify finite-sample $(d_\Theta,\rho)$-robustness, it suffices to show that conditions (c)--(d) in Theorem \ref{hampel-huber generalized - finite sample} hold for $\widehat t_n$ and $\Pi_n:=(X_1,\ldots,X_n)$. Condition (c) holds by assumption. Moreover, the mapping $\Theta\ni\mu\mapsto\pr^\mu\circ(X_1,\ldots,X_n)^{-1}=\mu^{\otimes n}$ is clearly $(d_\Theta,\rho_{n})$-continuous for every $n\in\N$. So condition (d) holds, too.

(iii) To verify that the map $T_0$ is $(d_\Theta,d_\Sigma)$-continuous, it suffices to show that conditions ($\alpha$)--($\beta$) in Theorem \ref{hampel-huber generalized - reversed} hold. But these two conditions hold by assumption. This completes the proof.
\end{proof}

It is apparent from the proof that assertion (iii) of Theorem \ref{hampel-huber generalized - plug-in estimator} still holds when not necessarily $\pr^\mu=\mu^{\otimes\N}$. To obtain some analogues of assertions (i)--(ii) for the case where not necessarily $\pr^\mu=\mu^{\otimes\N}$, we need some additional assumptions on $\pr^\mu$. Recall that the strong mixing coefficients of the sequence $(X_i)$ are defined by
$$
    \alpha_n^\mu\,:=\,\sup_{k\in\N}\,\sup_{A\in{\cal F}_{1}^k,\,B\in{\cal F}_{n+k}^{\infty}}|\pr^\mu[A\cap B]-\pr^\mu[A]\pr^\mu[B]|,\quad n\in\N,
$$
where ${\cal F}_1^k:=\sigma(X_1,\ldots,X_k)$ and ${\cal F}_m^\infty:=\sigma(X_m,X_{m+1},\ldots)$. According to Rosenblatt \cite{Rosenblatt1956}, the sequence $(X_i)$ is said to be {\em strongly mixing} (or {\em $\alpha$-mixing}) under $\pr^\mu$ if the strong mixing coefficient $\alpha_n^\mu$ converges to $0$ as $n\to\infty$. For an overview on mixing conditions, see, for instance, \cite{Bradley2005,Doukhan1994}. Examples for sets $\Theta_0$ that satisfy condition (\ref{hampel-huber generalized - plug-in estimator - strong mixing - eq}) below can be derived from Sections B and 3.1 in \cite{Zaehle2014b}. As before we assume that $\pr^\mu\circ X_i^{-1}=\mu$ for all $i\in\N$ and $\mu\in\Theta$.

\begin{theorem}\label{hampel-huber generalized - plug-in estimator - strong mixing}
Take the notation from above, and assume that $\Theta\subset{\cal M}_1^\psi(E)$. Let $\rho_{n}$ be any metric on the set ${\cal M}_1(E^n)$ of all probability measures on $(E^n,{\cal E}^{\otimes n})$ which metrizes the weak topology. Then the following three assertions hold:
\begin{itemize}
    \item[(i)] Assume that $(E,d_E)$ is in addition locally compact. Let $\Theta_0\subset\Theta$ and assume that for every $\mu_1\in\Theta_0$ and $\varepsilon>0$ there are some $\delta>0$ and $n_0\in\N$ such that
    \begin{equation}\label{hampel-huber generalized - plug-in estimator - strong mixing - eq}
        \mu_2\in\Theta_0,\quad d_\Theta(\mu_1,\mu_2)\le\delta\quad\Longrightarrow\quad \alpha_n^{\mu_2}\le\varepsilon \quad\mbox{for all }n\ge n_0.
    \end{equation}
    Then, if the sequence $(T_n)$ is asymptotically $(d_\psi,d_\Sigma)$-continuous, $(\widehat T_n)$ is asymptotically $(d_\Theta,\rho_{\mbox{\scriptsize{\rm P}}})$-robust on every locally uniformly $\psi$-integrating set $\Theta_0\subset\Theta$.
    \item[(ii)] Assume that the mapping $\mu\mapsto\pr^\mu\circ(X_1,\ldots,X_n)^{-1}$ is $(d_\Theta,\rho_{n})$-continuous on some fixed $\Theta_0\subset\Theta$ for every $n\in\N$. Then, if $\widehat t_n:E^n\to\Sigma$ is $(d_n,d_\Sigma)$-continuous for every $n\in\N$, the sequence $(\widehat T_n)$ is finite-sample $(d_\Theta,\rho)$-robust on $\Theta_0$.
    \item[(iii)] Assume that $(\widehat T_n)$ is asymptotically $(d_\Theta,\rho)$-robust on some $\Theta_0\subset\Theta$ and that there is a map $T_0:\Theta_0\rightarrow\Sigma$ such that $\lim_{n\to\infty}\pr^\mu[d_\Sigma(\widehat T_n,T_0(\mu))\ge\eta]=0$ for all $\eta>0$ and $\mu\in\Theta_0$. Then $T_0$ is $(d_\Theta,d_\Sigma)$-continuous.
\end{itemize}
\end{theorem}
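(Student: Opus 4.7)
The plan is to mirror the proof of Theorem \ref{hampel-huber generalized - plug-in estimator}, replacing independence of the coordinate projections by the strong mixing assumption and exploiting local compactness where i.i.d.\ laws of large numbers were used before. Assertion (iii) is literally the same as Theorem \ref{hampel-huber generalized - plug-in estimator}(iii): it follows at once from Theorem \ref{hampel-huber generalized - reversed} since the underlying product structure of $\pr^\mu$ played no role in that argument. Assertion (ii) is also immediate from Theorem \ref{hampel-huber generalized - finite sample} applied to the decomposition $\widehat T_n=\widehat t_n\circ(X_1,\ldots,X_n)$: condition (c) of that theorem holds by the assumed $(d_n,d_\Sigma)$-continuity of $\widehat t_n$, and condition (d) is exactly the hypothesis placed on $\mu\mapsto\pr^\mu\circ(X_1,\ldots,X_n)^{-1}$.

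For assertion (i) I would, exactly as before, apply Theorem \ref{hampel-huber generalized} with $(\Upsilon,d_\Upsilon):=(\Theta,d_\psi)$, $U(\mu):=\mu$, $V_n:=T_n$, $\widehat U_n:=\widehat m_n$, and would combine this with Lemma \ref{weak and psi weak topology} to replace $d_\Theta$ by $d_\psi$ on the locally uniformly $\psi$-integrating set $\Theta_0$. Condition (a) reduces to the assumed asymptotic $(d_\psi,d_\Sigma)$-continuity of $(T_n)$. The whole work is therefore to verify condition (b), that is, to show that for every $\mu_1\in\Theta_0$, $\varepsilon>0$, $\eta>0$ there exist $\delta>0$ and $n_0\in\N$ such that
\[
\mu_2\in\Theta_0,\ d_\psi(\mu_1,\mu_2)\le\delta\ \Longrightarrow\ \pr^{\mu_2}\!\bigl[d_\psi(\widehat m_n,\mu_2)\ge\eta\bigr]\le\varepsilon \text{ for all } n\ge n_0.
\]
Splitting $d_\psi=d_{\mbox{\scriptsize{\rm P}}}+|\int\psi\,d\cdot-\int\psi\,d\cdot|$, I would treat the two summands separately.

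For the $\psi$-integral part I would copy the truncation argument from the proof of Theorem \ref{hampel-huber generalized - plug-in estimator}(i): pick $a,\delta$ via the locally uniformly $\psi$-integrating property so that $\int\psi\eins_{\{\psi\ge a\}}d\mu_2$ is as small as desired for all $\mu_2\in\Theta_0$ with $d_\Theta(\mu_1,\mu_2)\le\delta$, and then control the truncated average $\frac1n\sum_{i=1}^n(\psi\wedge a)(X_i)-\int(\psi\wedge a)\,d\mu_2$ in $\pr^{\mu_2}$-probability by a variance estimate. Here the i.i.d.\ Chebyshev inequality is to be replaced by a strong-mixing variance bound of the form $\vari^{\mu_2}[\frac1n\sum f(X_i)]\le Cn^{-1}\|f\|_\infty^2(1+\sum_{k=1}^{n-1}\alpha_k^{\mu_2})$, and uniformity in $\mu_2$ near $\mu_1$ is delivered precisely by hypothesis (\ref{hampel-huber generalized - plug-in estimator - strong mixing - eq}) (any uniform $o(1)$ bound on $\alpha_k^{\mu_2}$ is enough after shrinking $\delta$ and enlarging $n_0$).

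For the Prohorov part I would use the local compactness of $(E,d_E)$, which together with separability gives a countable family $\{f_j\}\subset C_{\sf b}(E)$ of uniformly bounded Lipschitz functions metrizing the weak topology; this reduces the event $\{d_{\mbox{\scriptsize{\rm P}}}(\widehat m_n,\mu_2)\ge\eta\}$, up to arbitrarily small probability (by choosing a compact $K$ with $\mu_2[K^{\sf c}]$ small uniformly in $\mu_2$ near $\mu_1$, which is guaranteed by the locally uniformly $\psi$-integrating property), to a finite union of events $\{|\frac1n\sum f_j(X_i)-\int f_j d\mu_2|\ge\eta'\}$. Each of these is again handled by the strong-mixing variance bound and condition (\ref{hampel-huber generalized - plug-in estimator - strong mixing - eq}). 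The main obstacle is exactly this step: propagating the classical strong-mixing law of large numbers to a uniform-in-$\mu_2$ Glivenko--Cantelli statement in the Prohorov metric. Local compactness is what makes the reduction to a countable class work; without it one would need a much more delicate tightness argument. Once condition (b) is established this way, Theorem \ref{hampel-huber generalized} yields asymptotic $(d_\psi,\rho_{\mbox{\scriptsize{\rm P}}})$-robustness, and Lemma \ref{weak and psi weak topology} translates it into asymptotic $(d_\Theta,\rho_{\mbox{\scriptsize{\rm P}}})$-robustness on $\Theta_0$.
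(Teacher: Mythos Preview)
Your proposal is correct and follows the paper's strategy for all three parts: (ii) and (iii) via Theorems \ref{hampel-huber generalized - finite sample} and \ref{hampel-huber generalized - reversed}, and (i) via Theorem \ref{hampel-huber generalized} combined with Lemma \ref{weak and psi weak topology}, splitting $d_\psi$ into a weak-topology part and the $\psi$-integral part, with the i.i.d.\ variance estimates replaced by a strong-mixing covariance bound (the paper cites Inequality~(5.1) in Rio~\cite{Rio1995}, which produces the Ces\`aro average $\frac{1}{n}\sum_{i=1}^n\alpha_i^{\mu_2}$; this is then shown small near $\mu_1$ using (\ref{hampel-huber generalized - plug-in estimator - strong mixing - eq}) together with the universal bound $\alpha_i^{\mu_2}\le 1/4$).

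The one place where the paper is cleaner is the Prohorov summand. Rather than controlling $d_{\mbox{\scriptsize{\rm P}}}(\widehat m_n,\mu_2)$ directly via a tightness argument, the paper exploits that $d_{\mbox{\scriptsize{\rm w}}}$ in (\ref{Def d psi}) may be \emph{any} metric generating the weak topology, and simply replaces $d_{\mbox{\scriptsize{\rm P}}}$ by $d_{\mbox{\scriptsize{\rm vag}}}(\mu,\nu):=\sum_{k\ge1}2^{-k}\bigl(1\wedge|\int f_k\,d\mu-\int f_k\,d\nu|\bigr)$ for a sequence $(f_k)$ of continuous functions with compact support (available because $E$ is locally compact Polish). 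The reduction to finitely many bounded test functions is then immediate from the series tail, and no compact set $K$ needs to be extracted. This matters because your justification for that step is not quite right: the locally uniformly $\psi$-integrating property yields $\mu_2[\psi\ge a]\le\varepsilon$ but not tightness, since the level sets $\{\psi\le a\}$ are not assumed relatively compact here. One can recover uniform tightness near $\mu_1$ directly from local compactness and the Portmanteau theorem, so your route can be completed, but the paper's substitution of $d_{\mbox{\scriptsize{\rm vag}}}$ sidesteps the issue entirely.
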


\begin{proof}
We will only prove part (i); parts (ii) and (iii) can be proven exactly in the same line as parts (ii) and (iii) of Theorem \ref{hampel-huber generalized - plug-in estimator}. Since $(E,d_E)$ is a locally compact, complete and separable metric space, we can find a sequence $(f_k)$ of real-valued continuous functions on $E$ with compact support such that $d_{\mbox{\scriptsize{\rm vag}}}(\mu_1,\mu_2):=\sum_{k=1}^\infty2^{-k}(1\wedge|\int f_k\,d\mu_1-\int f_k\,d\mu_2|)$ provides a metric on $\Theta$ which metrizes the weak topology; cf.\ the proof of Theorem 31.5 in \cite{Bauer2001}. Then we can argue as in the proof of part (i) of Theorem \ref{hampel-huber generalized - plug-in estimator}, where $d_{\mbox{\scriptsize{\rm P}}}$ may be replaced by $d_{\mbox{\scriptsize{\rm vag}}}$. Condition (a) of Theorem \ref{hampel-huber generalized} holds by assumption. To verify condition (b) it suffices to show that for every $\mu_1\in\Theta_0$, $\varepsilon>0$, and $\eta>0$ there are some $\delta>0$ and $n_0\in\N$ such that
\begin{equation}\label{hampel-huber generalized - plug-in estimator - strong mixing - proof - 10}
    \mu_2\in\Theta_0,\quad d_\psi(\mu_1,\mu_2)\le\delta\quad\Longrightarrow\quad \pr^{\mu_2}\big[\,d_{\mbox{\scriptsize{\rm vag}}}(\widehat m_n,\mu_2)\ge\eta\,\big]\le\varepsilon \quad\mbox{for all }n\ge n_0
\end{equation}
and that for every $\mu_1\in\Theta_0$, $\varepsilon>0$, and $\eta>0$ there are some $\delta>0$ and $n_0\in\N$ such that
\begin{equation}\label{hampel-huber generalized - plug-in estimator - strong mixing -proof - 15}
    \mu_2\in\Theta_0,\quad d_\psi(\mu_1,\mu_2)\le\delta\quad\Longrightarrow\quad \pr^{\mu_2}\Big[\,\Big|\int\psi\,d\widehat m_n-\int\psi\,d\mu_2\Big|\ge\eta\,\Big]\le\varepsilon \quad\mbox{for all }n\ge n_0.
\end{equation}

We first verify (\ref{hampel-huber generalized - plug-in estimator - strong mixing - proof - 10}). Fix $\mu_1\in\Theta_0$, $\varepsilon>0$, and $\eta>0$. Choose $k_0=k_0(\eta)\in\N$ such that $\sum_{k=k_0}^\infty2^{-k}<\eta/2$. Then,
\begin{eqnarray}
    \pr^{\mu_2}\big[d_{\mbox{\scriptsize{\rm vag}}}(\widehat m_n,\mu_2)\ge\eta\big]
    & \le & \pr^{\mu_2}\Big[\sum_{k=1}^{k_0}\Big|\int f_k\,d\widehat m_n-\int f_k\,d\mu_2\Big|\ge\frac{\eta}{2}\Big]\nonumber\\
    & \le & \sum_{k=1}^{k_0}\pr^{\mu_2}\Big[\Big|\int f_k\,d\widehat m_n-\int f_k\,d\mu_2\Big|\ge\frac{\eta}{2k_0}\Big]\nonumber\\
    & = & \sum_{k=1}^{k_0}\pr^{\mu_2}\Big[\Big|\frac{1}{n}\sum_{i=1}^nf_k(X_i)-\ex^{\mu_2}[f_k(X_1)]\Big|\ge\frac{\eta}{2k_0}\Big]\nonumber\\
    & \le & \sum_{k=1}^{k_0}\frac{128\,k_0^2\,\|f_k\|_\infty^2}{\eta^2}\,\frac{1}{n}\sum_{i=1}^n\alpha_i^{\mu_2}\nonumber\\
    & \le & \frac{128\,k_0^3\,\max_{1\le k\le k_0}\|f_k\|_\infty^2}{\eta^2}\,\frac{1}{n}\sum_{i=1}^n\alpha_i^{\mu_2}\nonumber\\
    & =: & C(k_0,\eta)\,\frac{1}{n}\sum_{i=1}^n\alpha_i^{\mu_2},\label{hampel-huber generalized - plug-in estimator - strong mixing - proof - 20}
\end{eqnarray}
where the third from last line is a consequence of Inequality (5.1) on p.\,936 in \cite{Rio1995} (noting that the sequences $(f_k(X_i))$, $k\in\N$, have the same strong mixing coefficients as the sequence $(X_i)$). By assumption we may choose $\delta=\delta(\varepsilon)>0$ and $n_0'=n_0'(\varepsilon)\in\N$ such that for every $\mu_2\in\Theta_0$ with $d_{\mbox{\scriptsize{\rm vag}}}(\mu_1,\mu_2)\le\delta$ and $n\ge n_0'$ we have $\alpha_n^{\mu_2}\le \varepsilon/(2C(k_0,\eta))$. Since every strong mixing coefficient is bounded above by $1/4$ (cf.\ Inequality (1.9) in \cite{Bradley2005}), we can also choose some $n_0=n_0(\varepsilon)\ge n_0'$ such that for every $\mu_2\in\Theta_0$ we have that $\frac{1}{n_0}\sum_{i=1}^{n_0'}\alpha_i^{\mu_2}\le \varepsilon/(2C(k_0,\eta))$. Hence, for every $\mu_2\in\Theta_0$ with $d_{\mbox{\scriptsize{\rm vag}}}(\mu_1,\mu_2)\le\delta$ and $n\ge n_0$ we obtain
\begin{eqnarray}
    \frac{1}{n}\sum_{i=1}^n\alpha_i^{\mu_2}
    & = & \frac{1}{n}\sum_{i=1}^{n_0'}\alpha_i^{\mu_2}\,+\,\frac{1}{n}\sum_{i=n_0'+1}^n\alpha_i^{\mu_2}\nonumber\\
    & \le & \frac{1}{n_0}\sum_{i=1}^{n_0'}\alpha_i^{\mu_2}\,+\,\frac{1}{n}\sum_{i=n_0'+1}^n\frac{\varepsilon}{2C(k_0,\eta)}\nonumber\\
    & \le & \frac{\varepsilon}{2C(k_0,\eta)}\,+\,\frac{\varepsilon}{2C(k_0,\eta)}\,.\nonumber
\end{eqnarray}
Along with (\ref{hampel-huber generalized - plug-in estimator - strong mixing - proof - 20}) this implies (\ref{hampel-huber generalized - plug-in estimator - strong mixing - proof - 10}) with $d_\psi$ replaced by $d_{\mbox{\scriptsize{\rm vag}}}$. Since $d_{\mbox{\scriptsize{\rm vag}}}\le d_\psi$, we arrive at (\ref{hampel-huber generalized - plug-in estimator - strong mixing - proof - 10}).

Moreover, (\ref{hampel-huber generalized - plug-in estimator - strong mixing -proof - 15}) can be shown analogously to (\ref{hampel-huber generalized - plug-in estimator - proof - 10}) in the proof of Theorem \ref{hampel-huber generalized - plug-in estimator}. The only difference is in the analysis of $S_2(n,a)$. Instead of Chebychev's inequality one has to use Inequality (5.1) on p.\,936 in \cite{Rio1995}. Indeed, as in (\ref{hampel-huber generalized - plug-in estimator - strong mixing - proof - 20}) one gets
$$
    \pr^{\mu_2}\Big[\,\Big|\int\psi\eins_{\{\psi<a\}}\,d\widehat m_n-\int \psi\eins_{\{\psi<a\}}\,d\mu_2\Big|\ge\frac{\eta}{3}\Big]\,\le\,C(a,\eta)\,\frac{1}{n}\sum_{i=1}^n\alpha_i^{\mu_2},
$$
and then one can proceed as above. This completes the proof of Theorem \ref{hampel-huber generalized - plug-in estimator - strong mixing}.
\end{proof}


\subsubsection{A refined notion of robustness}\label{sec examples - Refinement}

Recall from Lemma \ref{weak and psi weak topology} that the locally uniformly $\psi$-integrating sets are exactly those sets on which the relative $\psi$-weak topology and the the relative weak topology coincide. This observation motivates Definition \ref{def psi robustness} below, which proposes a refined notion of robustness. Informally, the sequence $(\widehat T_n)$ will be said to be $\psi$-robust on $\Theta$ when it is ``Hampel robust'' on every subset $\Theta_0$ of $\Theta$ on which the relative $\psi$-weak topology and the relative weak topology coincide. Since $\psi\le\widetilde\psi$ implies that every locally uniformly $\widetilde\psi$-integrating set is also locally uniformly $\psi$-integrating (in particular that $\psi$-robustness entails $\widetilde\psi$-robustness), the function $\psi$ can be seen as a sort of gauge for the robustness.

The following definition is similar to Definition 2.13 in \cite{Kraetschmeretal2014} where $E$, ``$(d_\Theta,\rho_{\mbox{\scriptsize{\rm P}}})$-robust'' and ``locally uniformly'' are replaced by respectively $\R$, ``asymptotic $(d_\psi,\rho_{\mbox{\scriptsize{\rm P}}})$-robust'' and ``uniformly''. As before, $d_\Theta$ refers to any metric on $\Theta$ which metrizes the relative weak topology and $\widehat T_n$ is defined by (\ref{Sec statistical functionals - def}).

\begin{definition}\label{def psi robustness}
Take the notation from above, let $\psi: E\to[1,\infty)$ be a continuous function, and assume $\Theta\subset{\cal M}_1^\psi(E)$. The sequence $(\widehat T_n)$ is said to be $\psi$-robust (on $\Theta$) when it is $(d_\Theta,\rho_{\mbox{\scriptsize{\rm P}}})$-robust on every locally uniformly $\psi$-integrating set $\Theta_0\subset\Theta$.
\end{definition}

As already mentioned above, $\psi$-robustness is a stronger requirement than $\widetilde\psi$-robustness when $\psi\le\widetilde\psi$. In particular, $\psi_0$-robustness is the strongest notion of robustness within the framework of Definition \ref{def psi robustness}, where $\psi_0:\equiv1$. Note that every subset of ${\cal M}_1(E)$ is uniformly $\psi_0$-integrating. In this sense, $\psi_0$-robustness can be seen as Hampel robustness \cite{Cuevas1988,Hampel1971}. The following theorem provides a simple criterion for $\psi$-robustness.

\begin{theorem}\label{hampel-huber generalized - plug-in estimator - corollary}
Take the notation from above, and let $\psi: E\to[1,\infty)$ be a continuous function. Assume that $\Theta\subset{\cal M}_1^\psi(E)$ and that $\pr^\mu=\mu^{\otimes\N}$ for every $\mu\in\Theta$. Then the following assertions hold:
\begin{itemize}
    \item[(i)] If the family $\{T_n:n\in\N\}$ is equicontinuous w.r.t.\ the $\psi$-weakly topology, then the sequence $(\widehat T_n)$ is $\psi$-robust (on $\Theta$).
    \item[(ii)] If the sequence $(\widehat T_n)$ is $\psi$-robust (on $\Theta$) and there exists a map $T:\Theta\rightarrow\Sigma$ such that $\lim_{n\to\infty}\pr^\mu[|\widehat T_n-T(\mu)|\ge\eta\big]=0$ for all $\eta>0$ and $\mu\in\Theta$, then $T$ is $\psi$-weakly continuous.
\end{itemize}
In particular, when $T_n=T$ for all $n\in\N$ and $(\widehat T_n)$ is weakly consistent for $T(\mu)$ under $\pr^\mu$ for every $\mu\in\Theta$, the sequence $(\widehat T_n)$ is $\psi$-robust (on $\Theta$) if and only if the map $T$ is $\psi$-weakly continuous.
\end{theorem}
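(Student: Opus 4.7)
The plan is to reduce both parts to Theorem \ref{hampel-huber generalized - plug-in estimator} together with the characterization of locally uniformly $\psi$-integrating sets provided by Lemma \ref{characterization of relative psi weak compactness} and the metrizability of the $\psi$-weak topology. By Definition \ref{def psi robustness}, proving (i) amounts to establishing both asymptotic and finite-sample $(d_\Theta,\rho_{\mbox{\scriptsize{\rm P}}})$-robustness on every locally uniformly $\psi$-integrating set $\Theta_0\subset\Theta$. For the asymptotic half, equicontinuity of $\{T_n:n\in\N\}$ in the $\psi$-weak topology entails asymptotic $(d_\psi,d_\Sigma)$-continuity of $(T_n)$, so Theorem \ref{hampel-huber generalized - plug-in estimator}(i) applies directly. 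For the finite-sample half, by Theorem \ref{hampel-huber generalized - plug-in estimator}(ii) it suffices to check that each $\widehat t_n=T_n\circ\widehat m_n:E^n\to\Sigma$ is $(d_n,d_\Sigma)$-continuous. This follows because $(x_1,\ldots,x_n)\mapsto\widehat m_n(x_1,\ldots,x_n)$ is $(d_n,d_\psi)$-continuous (if $x^{(k)}\to x$ in $(E^n,d_n)$ then $\frac{1}{n}\sum_{i=1}^n f(x_i^{(k)})\to\frac{1}{n}\sum_{i=1}^n f(x_i)$ for every $f\in C_\psi(E)$ by continuity of $f$), and each $T_n$ is $\psi$-weakly continuous by the equicontinuity hypothesis.

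For part (ii), fix $\mu\in\Theta$ and any sequence $(\mu_k)\subset\Theta$ with $\mu_k\to\mu$ $\psi$-weakly. Since the $\psi$-weak topology is metrizable, it suffices to show $T(\mu_k)\to T(\mu)$ in $d_\Sigma$. Set $\Theta_0:=\{\mu\}\cup\{\mu_k:k\in\N\}$; as a convergent sequence together with its limit this set is compact, hence relatively compact, in the $\psi$-weak topology, so the implication (i)$\Rightarrow$(iv) of Lemma \ref{characterization of relative psi weak compactness} (which, as the accompanying remark observes, does not require relative compactness of the level sets of $\psi$) shows that $\Theta_0$ is uniformly -- hence locally uniformly -- $\psi$-integrating. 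The assumed $\psi$-robustness of $(\widehat T_n)$ therefore delivers asymptotic $(d_\Theta,\rho_{\mbox{\scriptsize{\rm P}}})$-robustness on $\Theta_0$, and combined with the weak consistency hypothesis this lets us invoke Theorem \ref{hampel-huber generalized - plug-in estimator}(iii) to conclude that $T$ is $(d_\Theta,d_\Sigma)$-continuous on $\Theta_0$. Since $\psi$-weak convergence implies weak convergence, we obtain $T(\mu_k)\to T(\mu)$, proving $\psi$-weak continuity of $T$ at $\mu$.

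The concluding ``In particular'' claim is then immediate: $\psi$-robustness together with weak consistency gives $\psi$-weak continuity of $T$ via (ii), while conversely, any $\psi$-weakly continuous $T$ makes the singleton family $\{T_n\}=\{T\}$ trivially equicontinuous, so (i) yields $\psi$-robustness. The only genuinely delicate step in the entire argument is the identification, via Lemma \ref{characterization of relative psi weak compactness}, of $\psi$-weakly convergent sequences with (locally) uniformly $\psi$-integrating sets; everything else is either a direct appeal to Theorem \ref{hampel-huber generalized - plug-in estimator} or a straightforward continuity check for the empirical-measure map.
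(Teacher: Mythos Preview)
Your proof is correct and part (i) mirrors the paper's argument exactly. For part (ii) you take a slightly different route from the paper: both arguments fix a $\psi$-weakly convergent sequence $\mu_k\to\mu$ and set $\Theta_0:=\{\mu,\mu_1,\mu_2,\ldots\}$, then invoke Theorem \ref{hampel-huber generalized - plug-in estimator}(iii) once $\Theta_0$ is known to be (locally) uniformly $\psi$-integrating. The paper verifies this last point by a direct $\varepsilon$-argument (splitting $\int\psi\,d\mu_n$ into the contributions from $\{\psi\le a\}$ and $\{\psi>a\}$ and using the Portmanteau theorem together with $\int\psi\,d\mu_n\to\int\psi\,d\mu$), whereas you observe that $\Theta_0$ is $\psi$-weakly compact as a convergent sequence with its limit in a metrizable space and then appeal to the implication (i)$\Rightarrow$(iv) of Lemma \ref{characterization of relative psi weak compactness}, which by the accompanying remark does not require relative compactness of the level sets of $\psi$. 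Your route is shorter and leverages machinery already in place; the paper's route is more self-contained. Either way the conclusion is the same, and the remaining steps are identical.
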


\begin{proof}
(i) The sequence $(T_n)$ is asymptotically continuous w.r.t.\ the $\psi$-weak topology, because the family $\{T_n:n\in\N\}$ was assumed to be equicontinuous w.r.t.\ the $\psi$-weak
topology. It follows by part (i) of Theorem \ref{hampel-huber generalized - plug-in estimator} that the sequence $(\widehat T_n)$ is asymptotically $(d_\Theta,\rho_{\mbox{\scriptsize{\rm P}}})$-robust on every locally uniformly $\psi$-integrating set $\Theta_0\subset\Theta$. In view of part (ii) of Theorem \ref{hampel-huber generalized - plug-in estimator}, for finite-sample $(d_\Theta,\rho_{\mbox{\scriptsize{\rm P}}})$-robustness of $(\widehat T_n)$ (on $\Theta_0$) it suffices to show that the map $(x_1,\ldots,x_n)\mapsto \widehat t_n(x_1,\ldots,x_n):=\widehat T_n(x_1,\ldots,x_n)=T_n(\widehat m_n(x_1,\ldots,x_n))$ is $(d_n,d_\Sigma)$-continuous for every $n\in\N$, where the metric $d_n=d_{E^n}$ is as defined in (\ref{def metric on En}). The continuity of $\widehat t_n$ holds true, because $T_n$ is $(d_\psi,d_\Sigma)$-continuous by assumption and the mapping $(x_1,\ldots,x_n)\mapsto\widehat m_n(x_1,\ldots,x_n)$ is easily seen to be $(d_n,d_\psi)$-continuous, where $d_\psi$ is as in (\ref{Def d psi}). This proves part (i).

(ii) Now assume that $(\widehat T_n)$ is $\psi$-robust and that there exists a map $T:\Theta\rightarrow\Sigma$ such that $\lim_{n\to\infty}\pr^\mu[|\widehat T_n-T(\mu)|\ge\eta\big]=0$ for all $\eta>0$ and $\mu\in\Theta$. The $\psi$-robustness means that $(\widehat T_n)$ is $(d_\Theta,d_\Sigma)$-robust on every locally uniformly $\psi$-integrating set $\Theta_0\subset\Theta$. By part (iii) of Theorem \ref{hampel-huber generalized - plug-in estimator} we can conclude that $T|_{\Theta_0}$ is $(d_\Theta,d_\Sigma)$-continuous for every locally uniformly $\psi$-integrating set $\Theta_0\subset\Theta$. In the remainder we will show that this implies $\psi$-weak continuity of $T$. Let $\mu,\mu_1,\mu_2,\ldots\in\Theta$ such that $\mu_n\to\mu$ $\psi$-weakly (in particular, $\mu_n\to\mu$ weakly). We have to show that $d_\Sigma(T(\mu_n),T(\mu))\to 0$. Since $T|_{\Theta_0}$ is $(d_\Theta,d_\Sigma)$-continuous for every locally uniformly $\psi$-integrating set $\Theta_0\subset\Theta$, it suffice to show that the set $\Theta_0:=\{\mu,\mu_1,\mu_2,\ldots\}$ is (locally) uniformly $\psi$-integrating. Let $\varepsilon>0$ be fixed. Choose $a_1=a_1(\varepsilon)>0$ such that $\int\psi\eins_{\{\psi>a_1\}}\,d\mu<\varepsilon/3$. Since $\mu\circ\psi^{-1}$ as a probability measure on the real line has at most countably many atoms, there are at most countably many different $a>0$ with $\mu[\psi=a]>0$. In particular, we may assume $\mu[\psi=a_1]=0$. By the ($\psi$-) weak convergence of $\mu_n$ to $\mu$ and the Portmanteau theorem, we can find some $n_1=n_1(\varepsilon)\in\N$ such that $|\int\psi\eins_{\{\psi\le a_1\}}\,d\mu-\int\psi\eins_{\{\psi\le a_1\}}\,d\mu_n|\le\varepsilon/3$ for all $n\ge n_1$. The $\psi$-weak convergence of $\mu_n$ to $\mu$ also implies the existence of some $n_0=n_0(\varepsilon)\ge n_1$ such that $|\int\psi\,d\mu_n-\int\psi\,d\mu|\le\varepsilon/3$ for all $n\ge n_0$. Thus we have for all $n\ge n_0$,
\begin{eqnarray*}
    \lefteqn{\int\psi\eins_{\{\psi>a_1\}}\,d\mu_n}\\
    & \le & \Big|\int\psi\,d\mu_n\,-\,\int\psi\,d\mu\Big|\,+\,\int\psi\eins_{\{\psi>a_1\}}\,d\mu\,+\,\Big|\int\psi\eins_{\{\psi\le a_1\}}\,d\mu\,-\,\int\psi\eins_{\{\psi\le a_1\}}\,d\mu_n\Big|\\
    & \le & \varepsilon/3\,+\,\varepsilon/3\,+\,\varepsilon/3~=~\varepsilon.
\end{eqnarray*}
Now choose $a=a(\varepsilon,n_0)\ge a_1$ such that $\int\psi\eins_{\{\psi\ge a\}}\,d\mu_n\le\varepsilon$ for all $n=1,\ldots,n_0$. We then have $\int\psi\eins_{\{\psi\ge a\}}\,d\mu\le\varepsilon$ and $\int\psi\eins_{\{\psi\ge a\}}\,d\mu_n\le\varepsilon$ for all $n\in\N$. Hence, $\Theta_0:=\{\mu,\mu_1,\mu_2,\ldots\}$ is indeed (locally) uniformly $\psi$-integrating.
\end{proof}

When $E=\R$, the function $\psi_p:\R\rightarrow[1,\infty)$ defined by
\begin{equation}\label{def of psi p}
    \psi_p(x):=(1+|x|)^p,\quad x\in\R
\end{equation}
provides a reasonable gauge function for every $p\in[0,\infty)$. Note that ${\cal M}_1^{\psi_p}(\R)$ coincides with the set of the distributions of all random variables in the usual $L^p$-space (associated with any atomless probability space). For $0\le q<p$, we have $\psi_q<\psi_p$ (except at the origin) and therefore $\psi_q$-robustness is a stronger condition than $\psi_p$-robustness. In the following example we will see that the Average Value at Risk functional divided by $n$ and evaluated at the $n$-fold convolution of an empirical probability measure $\widehat m_n$ (based on i.i.d.\ observations) is $\psi_1$-robust but not $\psi_q$-robust for any $q\in[0,1)$. In particular, it is not $\psi_0$-robust, i.e.\ not Hampel robust in the classical sense. For the Average Value at Risk functional evaluated at $\widehat m_n$, i.e.\ for the ordinary empirical Average Value at Risk, the missing Hampel robustness was already observed in \cite{Contetal2010}.

\begin{examplenorm}\label{Example AVaR}
The Average Value at Risk (Expected Shortfall) at level $\alpha\in(0,1)$ is one of the most popular risk measures in finance and insurance. It is defined by $\avatr_\alpha(\cdot):=\frac{1}{\alpha}\int_0^\alpha\vatr_s(\cdot)ds$ on the usual $L^1$-space for any atomless probability space, where $\vatr_s(X)$ refers to the Value at Risk of $X$ at level $s$, i.e.\ to the lower $(1-s)$-quantile of the distribution of $X$. Since $\avatr_\alpha(X)$ depends only on the distribution of $X$, we may identify $\avatr_\alpha$ with a statistical functional ${\cal R}_\alpha$ on ${\cal M}_1^{\psi_1}(\R)$ through ${\cal R}_\alpha(\mu):=\avatr_\alpha(X_\mu)$, where $X_\mu\in L^1$ is any random variable with distribution $\mu$. 
It is well known that ${\cal R}_\alpha$ admits the representation
$$
    {\cal R}_\alpha(\mu)\,=\,-\int_{-\infty}^0g_\alpha(F_\mu(x))\,dx+\int_0^\infty(1-g_\alpha(F_\mu(x)))\,dx
$$
with $g_\alpha(s):=\frac{1}{\alpha}\max\{s-(1-\alpha);0\}$, where $F_\mu$ stands for the distribution function of $\mu$.

For any $\mu\in{\cal M}_1^{\psi_1}(\R)$ the quantity
$$
    T_n(\mu):={\cal R}_\alpha(\mu^{*n})/n
$$
can be seen as a suitable insurance premium for each individual risk of an insurance collective consisting of $n$ risks that are i.i.d.\ according to $\mu$, where $\mu^{*n}$ refers to the $n$-fold convolution of $\mu$. The individual premium $T_n(\mu)$ depends on the size $n$ of the collective, which reflects the balancing of risks in collectives. If the claims $X_1,\ldots,X_n$ of the $n$ individual risks could be observed in the previous insurance period, then
\begin{equation}\label{Example AVaR - Eq - 2}
    \widehat T_n:=T_n(\widehat m_n)={\cal R}_\alpha(\widehat m_n^{*n})/n
\end{equation}
with $\widehat m_n:=\frac{1}{n}\sum_{i=1}^n\delta_{X_i}$ provides a reasonable estimator for the individual premium $T_n(\mu)$ for the next insurance period. This setting meets the framework above for any $\Theta\subset{\cal M}_1^{\psi_1}(\R)$. For background on the estimation of premiums for individual risks of insurance collectives, see \cite{LauerZaehle2015}. The following Lemma \ref{Example AVaR - Lemma} shows that on any $\Theta$ nested between ${\cal M}_1^\infty(\R)$ and ${\cal M}_1^{\psi_1}(\R)$ (with ${\cal M}_1^\infty(\R)$ as defined below), the sequence $(\widehat T_n)$ is $\psi_1$-robust but not $\psi_q$-robust for any $q\in[0,1)$.
{\hspace*{\fill}$\Diamond$\par\bigskip}
\end{examplenorm}

Let ${\cal M}_1^\infty(\R)$ be the set of all Borel probability measures on $\R$ with compact support.

\begin{lemma}\label{Example AVaR - Lemma}
The sequence $(\widehat T_n)$ introduced in (\ref{Example AVaR - Eq - 2}) in Example \ref{Example AVaR} is $\psi_1$-robust on ${\cal M}_1^{\psi_1}(\R)$ but not $\psi_q$-robust on ${\cal M}_1^\infty(\R)$ for $q\in[0,1)$.
\end{lemma}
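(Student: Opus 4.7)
For the positive part, I would appeal to Theorem \ref{hampel-huber generalized - plug-in estimator - corollary}(i) and show that the family $\{T_n:n\in\N\}$ with $T_n(\mu)={\cal R}_\alpha(\mu^{*n})/n$ is equicontinuous with respect to the $\psi_1$-weak topology on ${\cal M}_1^{\psi_1}(\R)$. The key is the well-known Lipschitz estimate
$$
    |{\cal R}_\alpha(\mu_1)-{\cal R}_\alpha(\mu_2)|\,\le\,\tfrac{1}{\alpha}\int_0^\alpha|F_{\mu_1}^{-1}(1-s)-F_{\mu_2}^{-1}(1-s)|\,ds\,\le\,\tfrac{1}{\alpha}\,W_1(\mu_1,\mu_2),
$$
where $W_1$ denotes the Wasserstein-1 distance. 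Combined with the subadditivity $W_1(\mu_1^{*n},\mu_2^{*n})\le nW_1(\mu_1,\mu_2)$ (obtained by summing $n$ independent copies of an optimal coupling), this yields
$$
    |T_n(\mu_1)-T_n(\mu_2)|\,\le\,\tfrac{1}{n\alpha}W_1(\mu_1^{*n},\mu_2^{*n})\,\le\,\tfrac{1}{\alpha}W_1(\mu_1,\mu_2)
$$
uniformly in $n$. Since $W_1$ metrizes the $\psi_1$-weak topology on ${\cal M}_1^{\psi_1}(\R)$, the family $\{T_n\}$ is equi-Lipschitz and hence $\psi_1$-weakly equicontinuous, and Theorem \ref{hampel-huber generalized - plug-in estimator - corollary}(i) delivers $\psi_1$-robustness on ${\cal M}_1^{\psi_1}(\R)$.

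For the negative part, I would first show that $\widehat T_n$ is $\pr^\mu$-consistent for the mean $T(\mu):=\int x\,d\mu(x)$ on ${\cal M}_1^\infty(\R)$. Using the positive homogeneity of ${\cal R}_\alpha$, $\widehat T_n={\cal R}_\alpha(\widehat m_n^{*n}/n)$ with $\widehat m_n^{*n}/n$ being the conditional law of the bootstrap mean $\bar Y_n=\frac{1}{n}\sum_{i=1}^n Y_i$ (the $Y_i$ i.i.d.\ from $\widehat m_n$ given $X_1,\ldots,X_n$). Bounding Wasserstein distance by the expected $L^1$-coupling cost and then by the $L^2$-norm,
$$
    W_1\big(\widehat m_n^{*n}/n,\,\delta_{\bar X_n}\big)\,\le\,\ex\big[|\bar Y_n-\bar X_n|\,\big|\,\widehat m_n\big]\,\le\,\hat\sigma_n/\sqrt{n},
$$
where $\hat\sigma_n^2=\frac{1}{n}\sum_{i=1}^n(X_i-\bar X_n)^2$. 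For $\mu\in{\cal M}_1^\infty(\R)$ the random variables $\hat\sigma_n$ are a.s.\ bounded by the diameter of $\supp(\mu)$, so the right-hand side tends to $0$ a.s. Together with the Lipschitz estimate from the first paragraph and the strong law $\bar X_n\to T(\mu)$, this gives $\widehat T_n\to T(\mu)$ $\pr^\mu$-a.s., hence in probability.

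Finally, I would exhibit an explicit failure of $\psi_q$-weak continuity of $T$ on ${\cal M}_1^\infty(\R)$ for $q\in[0,1)$. Take $\mu_n:=(1-1/n)\delta_0+(1/n)\delta_n\in{\cal M}_1^\infty(\R)$. Then $\mu_n\to\delta_0$ weakly and
$$
    \int\psi_q\,d\mu_n\,=\,(1-1/n)+\tfrac{(1+n)^q}{n}\,\longrightarrow\,1\,=\,\int\psi_q\,d\delta_0\quad\mbox{for }q<1,
$$
so $\mu_n\to\delta_0$ $\psi_q$-weakly, while $T(\mu_n)=1\not\to 0=T(\delta_0)$. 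If $(\widehat T_n)$ were $\psi_q$-robust on ${\cal M}_1^\infty(\R)$, then by Theorem \ref{hampel-huber generalized - plug-in estimator - corollary}(ii) combined with the consistency $\widehat T_n\to T(\mu)$ established above, the map $T$ would have to be $\psi_q$-weakly continuous on ${\cal M}_1^\infty(\R)$; this contradicts the displayed counterexample. The main technical point is controlling the fluctuations of the bootstrap mean under the $n$-fold convolution (solved by the $\hat\sigma_n/\sqrt{n}$ Wasserstein bound under compact support); once consistency toward the mean is in hand, both directions reduce to continuity/discontinuity of the mean functional.
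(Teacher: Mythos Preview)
Your proof is correct and follows the same overall architecture as the paper: for $\psi_1$-robustness you invoke Theorem \ref{hampel-huber generalized - plug-in estimator - corollary}(i) via the Wasserstein Lipschitz bound for ${\cal R}_\alpha$ together with $W_1(\mu_1^{*n},\mu_2^{*n})\le nW_1(\mu_1,\mu_2)$, exactly as the paper does; and for the failure of $\psi_q$-robustness you argue by contraposition through Theorem \ref{hampel-huber generalized - plug-in estimator - corollary}(ii), using consistency of $\widehat T_n$ toward the mean and discontinuity of the mean functional in the $\psi_q$-weak topology.

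The differences lie in how you fill in the two ingredients of the negative part. For consistency on ${\cal M}_1^\infty(\R)$ the paper simply cites \cite{LauerZaehle2015}, whereas you give a self-contained argument: rewrite $\widehat T_n={\cal R}_\alpha(\widehat m_n^{*n}/n)$, bound $W_1$ between the bootstrap-mean law and $\delta_{\bar X_n}$ by $\hat\sigma_n/\sqrt n$, and use that $\hat\sigma_n$ is a.s.\ bounded under compact support. This is elementary and transparent; the implicit step you use, ${\cal R}_\alpha(\delta_{\bar X_n})=\bar X_n$, is immediate but worth stating. For the discontinuity of $T(\mu)=\int x\,d\mu$, the paper appeals to the construction in the proof of Lemma \ref{lemma on continuity of pth moment} (random variables $w_n\eins_{[1/n,1]}x^{-1/p}$ on $[0,1]$), while your sequence $\mu_n=(1-1/n)\delta_0+(1/n)\delta_n$ is a simpler and more direct counterexample that stays inside ${\cal M}_1^\infty(\R)$ and makes the $\psi_q$-weak convergence to $\delta_0$ explicit. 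Both routes are valid; yours has the advantage of being fully self-contained within the paper's framework.
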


\begin{proof}
Part (i) of Theorem \ref{hampel-huber generalized - plug-in estimator - corollary} ensures that the sequence $(\widehat T_n)$ is $\psi_1$-robust on ${\cal M}_1^{\psi_1}(\R)$, because the family $\{T_n:n\in\N\}$ is equicontinuous on ${\cal M}_1^{\psi_1}(\R)$ w.r.t.\ the $\psi_1$-weak topology. The equicontinuity of the family $\{T_n:n\in\N\}$ can be seen as follows. On the one hand, the $\psi_1$-weak topology is metrizable by the $L^1$-Wasserstein metric $d_{\mbox{\scriptsize{\rm W,1}}}(\mu_1,\mu_2):=\int_{-\infty}^\infty|F_{\mu_1}(x)-F_{\mu_1}(x)|\,dx$ (cf.\ Remark 2.9 in \cite{Kraetschmeretal2014}), and on the other hand we have
\begin{eqnarray*}
    |T_n(\mu_1)-T_n(\mu_2)|
    & = & |{\cal R}_\alpha(\mu_1^{*n})/n-{\cal R}_\alpha(\mu_2^{*n})/n| ~\le~\frac{1}{\alpha n}\int_{-\infty}^\infty|F_{\mu_1^{*n}}(x)-F_{\mu_2^{*n}}(x)|\,dx\\
    & = & \frac{1}{\alpha n}\,d_{\mbox{\scriptsize{\rm W,1}}}(\mu_1^{*n},\mu_2^{*n})~\le~\frac{1}{\alpha n}\,n\,d_{\mbox{\scriptsize{\rm W,1}}}(\mu_1,\mu_2)~=~\frac{1}{\alpha}\,d_{\mbox{\scriptsize{\rm W,1}}}(\mu_1,\mu_2).
\end{eqnarray*}
For the second last step we used the inequality $d_{\mbox{\scriptsize{\rm W,1}}}(\mu_1^{*n},\mu_2^{*n})\le n\,d_{\mbox{\scriptsize{\rm W,1}}}(\mu_1,\mu_2)$, which can be easily shown by an induction on $n$.

To prove the second assertion, let $T(\mu):=\int x\,\mu(dx)$. On the one hand, the strong law of large numbers and Theorem 2.4\,(v) (and Example 2.3) in \cite{LauerZaehle2015} show that $\lim_{n\to\infty}\widehat T_n=T(\mu)$ $\pr^\mu$-a.s.\ for every $\mu\in{\cal M}_1^\infty(\R)$. On the other hand, the proof of Lemma \ref{lemma on continuity of pth moment} below shows that the functional $T$ on ${\cal M}_1^\infty(\R)$ is not continuous w.r.t.\ the $\psi_q$-weak topology for $q\in[0,1)$. So $(\widehat T_n)$ cannot be $\psi_q$-robust on ${\cal M}_1^\infty(\R)$ for $q\in[0,1)$, because otherwise we would obtain a contradiction to part (ii) of Theorem \ref{hampel-huber generalized - plug-in estimator - corollary}.
\end{proof}


\subsubsection{A degree of robustness}\label{sec examples - Degree}

In Example \ref{Example AVaR} we have seen that the sequence $(\widehat T_n)$ introduced in (\ref{Example AVaR - Eq - 2}) is $\psi_1$-robust but not $\psi_q$-robust for any $q\in[0,1)$. To some extent, this is a statement about the ``degree'' of robustness of the sequence $(\widehat T_n)$. In fact, the ``degree'' of robustness can be formalized as follows. Let $E=\R$ and $\Theta\subset{\cal M}_1(\R)$, and assume as before that $\mathfrak{E}_n\subset\Theta$ for all $n\in\N$. Then, for every sequence $(T_n)$ of maps $T_n:\Theta\to\Sigma$ we may define the index of robustness for the sequence $(\widehat T_n)\equiv(T_n(\widehat m_n))$ by
$$
    {\rm ior}_\Theta(T_n)\,:=\,1/\inf\big\{p\in[0,\infty):\,(\widehat T_n)\mbox{ is $\psi_p$-robust on }\Theta\big\}
$$
with the conventions $\inf\emptyset:=\infty$, $1/\infty:=0$, and $1/0:=\infty$. On a more informal level amd in a slightly different setting, this concept was already proposed in Remark 3.8 in \cite{Kraetschmeretal2012}; see also Section 2.4 in \cite{Kraetschmeretal2014} for a special case. Of course, in the same way we can define the index of robustness in the case $E=\R^d$; just replace the right-hand side in (\ref{def of psi p}) by $(1+\|x\|)^p$.

We have already seen in the proof of Lemma \ref{Example AVaR - Lemma} that Theorem \ref{hampel-huber generalized - plug-in estimator - corollary} is a useful tool for the specification of the index of robustness. We will now consider another example, namely the sample $p$-th absolute moment. On the one hand, this is more or less a toy example. On the other hand, Corollary \ref{robustness of pth moment} shows that the range of ${\rm ior}$ is a continuum and that the concept of the ${\rm ior}$ is compatible with our intuition that the degree of robustness of the sample $p$-th absolute moment is the higher the smaller $p$ is. For any $p\in(0,\infty)$ let
$$
    T^{(p)}(\mu):=\int |x|^p\,\mu(dx),\quad \mu\in{\cal M}_1^{\psi_p}(\R),
$$
and
$$
    \widehat T_n^{(p)}(x)\,:=\,T^{(p)}(\widehat m_n(x_1,\ldots,x_n))\,=\,\frac{1}{n}\sum_{i=1}^n|x_i|^p,\quad x=(x_1,x_2,\ldots)\in\R^\N,
$$
and recall that ${\cal M}_1^\infty(\R)$ refers to the set of all Borel probability measures on $\R$ with compact support.

\begin{lemma}\label{lemma on continuity of pth moment}
For any $p\in(0,\infty)$, the functional $T^{(p)}$ is $\psi_p$-weakly continuous on ${\cal M}_1^{\psi_p}(\R)$ but not $\psi_q$-weakly continuous on ${\cal M}_1^\infty(\R)$ for $q\in[0,p)$.
\end{lemma}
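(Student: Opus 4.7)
The plan for the lemma splits cleanly into the two directions.

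For the positive assertion, I would simply observe that the function $f(x):=|x|^p$ is continuous on $\R$ and satisfies $|f(x)|/\psi_p(x)=|x|^p/(1+|x|)^p\le 1$, hence $f\in C_{\psi_p}(\R)$. By the very definition of the $\psi_p$-weak topology on ${\cal M}_1^{\psi_p}(\R)$ as the coarsest topology making all maps $\mu\mapsto\int g\,d\mu$ continuous for $g\in C_{\psi_p}(\R)$, the map $T^{(p)}(\mu)=\int f\,d\mu$ is continuous. This is essentially a one-line check.

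For the negative assertion, the plan is to exhibit an explicit counterexample at the Dirac measure $\mu=\delta_0\in{\cal M}_1^\infty(\R)$. Fix $q\in[0,p)$ and consider the sequence of compactly supported probability measures
\begin{equation*}
\mu_n\,:=\,(1-a_n)\delta_0+a_n\delta_n,\qquad n\in\N,
\end{equation*}
where I choose $a_n:=n^{-(p+q)/2}$, so that $a_n\to 0$ while $a_n n^p=n^{(p-q)/2}\to\infty$ and $a_n(1+n)^q\sim n^{-(p-q)/2}\to 0$. Clearly $\mu_n\in{\cal M}_1^\infty(\R)$. To check $\psi_q$-weak convergence $\mu_n\to\delta_0$, I would take any $g\in C_{\psi_q}(\R)$ and use $|g(n)|\le\|g/\psi_q\|_\infty(1+n)^q$ to conclude
\begin{equation*}
\int g\,d\mu_n=(1-a_n)g(0)+a_n g(n)\,\longrightarrow\,g(0)=\int g\,d\delta_0,
\end{equation*}
by the choice of $a_n$. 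On the other hand, $T^{(p)}(\mu_n)=a_n n^p\to\infty\neq 0=T^{(p)}(\delta_0)$, which shows that $T^{(p)}$ fails to be $\psi_q$-weakly continuous at $\delta_0$ (and $\delta_0,\mu_n\in{\cal M}_1^\infty(\R)$).

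The only delicate point is the calibration of $a_n$ and $b_n:=n$: one needs $a_n(1+b_n)^q\to 0$ (so that all $C_{\psi_q}$-integrals behave as at $\delta_0$) together with $a_n b_n^p\not\to 0$ (so that $T^{(p)}(\mu_n)$ blows up). The exponent $(p+q)/2$ lies strictly between $q$ and $p$ and achieves both requirements simultaneously; this is really the only thing to get right in the argument, and the strict inequality $q<p$ is exactly what makes such a choice possible.
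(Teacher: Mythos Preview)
Your proof is correct. The positive direction matches the paper's (which just says ``clearly''), and your argument that $|x|^p\in C_{\psi_p}(\R)$ is exactly the right justification.

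For the negative direction the two proofs take genuinely different routes. The paper invokes a representation theorem from Kr\"atschmer et al.\ (2014) to translate $\psi_q$-weak convergence into $L^q$-convergence of random variables on the atomless space $([0,1],\ell)$, and then constructs $X_n(x)=w_n\eins_{[1/n,1]}(x)x^{-1/p}$ with $w_n\to 0$ and $w_n^p\log n\nearrow\infty$, so that $\|X_n\|_q\to 0$ for every $q<p$ while $\overline{\ex}[|X_n|^p]\to\infty$. You instead build a fully self-contained counterexample with two-point measures $\mu_n=(1-a_n)\delta_0+a_n\delta_n$ and verify $\psi_q$-weak convergence directly from the definition of the topology. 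Your argument is more elementary---no external reference, no representation theorem, no atomless space---and arguably more transparent. The paper's construction has the minor structural advantage that a \emph{single} sequence witnesses the failure for all $q\in(0,p)$ at once, whereas your $a_n=n^{-(p+q)/2}$ depends on the fixed $q$; this is not a defect, since the lemma only asks for discontinuity at each fixed $q$, and if you wanted a uniform sequence you could take, e.g., $a_n=n^{-p}\log n$.
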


\begin{proof}
The functional $T^{(p)}$ is clearly $\psi_p$-weakly continuous on ${\cal M}_1^{\psi_p}(\R)$. To verify the second claim we have to show that for every $q\in[0,p)$ there are some $\mu,\mu_1,\mu_2,\ldots\in{\cal M}_1^\infty(\R)$ such that $\mu_n\to\mu$ $\psi_q$-weakly but $T^{(p)}(\mu_n)\not\to T^{(p)}(\mu)$. By the Representation theorem 3.5 in \cite{Kraetschmeretal2014}, it suffices to show that for some atomless probability space $(\overline{\Omega},\overline{{\cal F}},\overline{\pr})$ there exists $X,X_1,X_2,\ldots\in L^\infty(\overline{\Omega},\overline{{\cal F}},\overline{\pr})$ such that $\|X_n-X\|_q\to 0$ for every $q\in(0,p)$ but $\overline{\ex}[|X_n|^p]\not\to\overline{\ex}[|X|^p]$. Let $(\overline{\Omega},\overline{{\cal F}},\overline{\pr})=([0,1],{\cal B}([0,1]),\ell|_{[0,1]})$ with $\ell$ the Lebesgue measure on the line. Set $X:\equiv 0$ and $X_n(x):=w_n\eins_{[1/n,1]}(x)x^{-1/p}$, $x\in[0,1]$, $n\in\N$, for any sequence $(w_n)\subset\R_+$ such that $w_n\to 0$ and $w_n^p\,\log n\nearrow\infty$. Then $\|X_n\|_q\to 0$ for every $q\in(0,p)$ but $\overline{\ex}[|X_n|^p]\nearrow\infty$.
\end{proof}

\begin{corollary}\label{robustness of pth moment}
Let $p\in(0,\infty)$, and $\Theta$ be any set of Borel probability measures on $\R$ such that ${\cal M}_1^\infty(\R)\subset\Theta\subset{\cal M}_1^{\psi_p}(\R)$. Then on $\Theta$ the sequence $(\widehat T_n^{(p)})$ is $\psi_p$-robust but not $\psi_q$-robust for $q\in[0,p)$. In particular, ${\rm ior}_\Theta(T^{(p)})=1/p$.
\end{corollary}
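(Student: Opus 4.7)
The approach is to read both halves of the statement off as consequences of Theorem \ref{hampel-huber generalized - plug-in estimator - corollary} combined with Lemma \ref{lemma on continuity of pth moment}. Here the sequence is constant, $T_n := T^{(p)}$ for every $n\in\N$, so $\widehat T_n^{(p)}$ is exactly $T^{(p)}(\widehat m_n)$ in the sense of (\ref{Sec statistical functionals - def}), and the setting of Subsection~\ref{sec examples - General criteria} is in force with $E=\R$.

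For the positive claim I would apply part~(i) of Theorem \ref{hampel-huber generalized - plug-in estimator - corollary}. When $T_n\equiv T^{(p)}$, the family $\{T_n:n\in\N\}$ is the singleton $\{T^{(p)}\}$, so equicontinuity with respect to the $\psi_p$-weak topology degenerates to ordinary $\psi_p$-weak continuity of $T^{(p)}$. Lemma \ref{lemma on continuity of pth moment} provides exactly this continuity on ${\cal M}_1^{\psi_p}(\R)$, hence \emph{a fortiori} on $\Theta\subset{\cal M}_1^{\psi_p}(\R)$, and part~(i) immediately yields the desired $\psi_p$-robustness on $\Theta$.

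For the negative claim I would fix $q\in[0,p)$ and argue by contradiction. Under $\pr^\mu=\mu^{\otimes\N}$ with $\mu\in\Theta\subset{\cal M}_1^{\psi_p}(\R)$, the coordinate projections $X_i$ are i.i.d.\ with $\ex^\mu[|X_1|^p]<\infty$, so the strong law of large numbers delivers $\widehat T_n^{(p)}\to T^{(p)}(\mu)$ almost surely (hence in probability) for every $\mu\in\Theta$. If $(\widehat T_n^{(p)})$ were $\psi_q$-robust on $\Theta$, part~(ii) of Theorem \ref{hampel-huber generalized - plug-in estimator - corollary} would force $T^{(p)}$ to be $\psi_q$-weakly continuous on $\Theta$, and in particular on the subset ${\cal M}_1^\infty(\R)\subset\Theta$. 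This contradicts the second half of Lemma \ref{lemma on continuity of pth moment}.

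Finally, for the identity ${\rm ior}_\Theta(T^{(p)})=1/p$, I would exploit the monotonicity of $\psi$-robustness in $\psi$ pointed out after Definition \ref{def psi robustness}: since $\psi_p\le\psi_r$ pointwise whenever $p\le r$, every locally uniformly $\psi_r$-integrating subset of $\Theta$ is also locally uniformly $\psi_p$-integrating, so $\psi_p$-robustness entails $\psi_r$-robustness for every $r\ge p$. Combined with the two parts above, the set $\{r\in[0,\infty):(\widehat T_n^{(p)})\mbox{ is }\psi_r\mbox{-robust on }\Theta\}$ equals $[p,\infty)$; its infimum is $p$, and therefore ${\rm ior}_\Theta(T^{(p)})=1/p$. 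I do not anticipate any serious obstacle beyond the bookkeeping needed for this last monotonicity step, since the analytical substance (the $\psi_p$-weak continuity of $T^{(p)}$ and its failure for $q<p$) has already been packaged into Lemma \ref{lemma on continuity of pth moment}.
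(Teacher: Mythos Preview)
Your proposal is correct and follows essentially the same route as the paper: both invoke Lemma \ref{lemma on continuity of pth moment} to supply the $\psi_p$-weak continuity of $T^{(p)}$ and its failure for $q<p$, note weak consistency of $\widehat T_n^{(p)}$, and then read off the two assertions from the respective parts of Theorem \ref{hampel-huber generalized - plug-in estimator - corollary}. Your write-up is simply more explicit about the details (the singleton-equicontinuity observation, the SLLN for consistency, and the monotonicity argument for the ${\rm ior}$), all of which the paper leaves implicit.
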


\begin{proof}
By Lemma \ref{lemma on continuity of pth moment}, the functional $T^{(p)}$ is $\psi_p$-weakly continuous on ${\cal M}_1^{\psi_p}(\R)$ but not $\psi_q$-weakly continuous on ${\cal M}_1^\infty(\R)$ for $q\in[0,p)$. Moreover, the sample $p$-th absolute moment is known to be weakly consistent on ${\cal M}_1^{\psi_p}(\R)$ in the sense that $\lim_{n\to\infty}\pr^\mu[|\widehat T_n^{(p)}-T^{(p)}(\mu)|\ge\eta\big]=0$ for all $\eta>0$ and $\mu\in{\cal M}_1^{\psi_p}(\R)$. Thus, the claim of Corollary \ref{robustness of pth moment} is an immediate consequence of Theorem \ref{hampel-huber generalized - plug-in estimator - corollary}.
\end{proof}


\subsection{Estimators in dominated parametric statistical models}\label{Sec exponential models}

In this section we consider point estimators in parametric statistical models. Although we allow $\Theta$ and $\Sigma$ to be rather general metric spaces, in applications one often has $\Theta\subset\R^k$ and $\Sigma=\R^d$. Theorems \ref{hampel-huber generalized - parametric model - finite sample} and \ref{hampel-huber generalized - parametric model - asymptotically} provide general criteria for respectively finite-sample robustness and asymptotic robustness in dominated statistical models. From these criteria one can easily derive robustness of very classical point estimators; cf.\ Examples \ref{hampel-huber generalized - parametric model - finite sample - remark} and \ref{hampel-huber generalized - parametric model - asymptotically - example}. Throughout we will restrict ourselves to a fixed aspect, that is, $T_n=T$ for all $n\in\N$.

Let $(\Theta,d_\Theta)$ be a metric space, $(\Sigma,d_\Sigma)$ be a complete and separable metric space, and ${\cal S}$ be the corresponding Borel $\sigma$-field on $\Sigma$. Let $(E,{\cal E})$ be a measurable space, $(\Omega,{\cal F}):=(E^\N,{\cal E}^{\otimes\N})$, $X_i$ be the $i$-th coordinate projection on $\Omega$, and $\pr^\theta$ be any probability measure on $(\Omega,{\cal F})$ for every $\theta\in\Theta$. We will assume that our parametric statistical model is dominated. That is, we assume that for every $n\in\N$ there is some $\sigma$-finite measure $\mu_n$ on $(E^n,{\cal E}^{\otimes n})$ (called the dominating measure) such that for every $\theta\in\Theta$ the law $\pr^\theta\circ(X_1,\ldots,X_n)^{-1}$ is absolutely continuous w.r.t.\ $\mu_n$ with Radon--Nikodym derivative
$$
    L_n(\,\cdot\,;\theta)\,:=\,\frac{d(\pr^\theta\circ(X_1,\ldots,X_n)^{-1})}{d\mu_n}\,.
$$
Let $T:\Theta\to\Sigma$ be any function. For every $n\in\N$, let $\widehat T_n:\Omega\to\R$ be any map such that $\widehat T_n(x)=\widehat T_n(x_1,\ldots,x_n)$ for all $x=(x_1,x_2,\dots)\in\Omega$, and assume that $\widehat T_n$ is $({\cal F},{\cal S})$-measurable.

\begin{theorem}\label{hampel-huber generalized - parametric model - finite sample}
Take the notation from above and let $\Theta_0\subset\Theta$. Assume that $(E,d_E)$ is a complete and separable metric space and that ${\cal E}$ is the corresponding Borel $\sigma$-field. Let $d_{E^n}$ be defined as in (\ref{def metric on En}), and assume that the following two conditions hold:
\begin{itemize}
    \item[(i)] $x^n\mapsto\widehat T_n(x^n)$ is $(d_{E^n},d_\Sigma)$-continuous on $E^n$ for every $n\in\N$.
    \item[(ii)] For every $\theta_1\in\Theta_0$, $\varepsilon>0$, and $n\in\N$ there is some $\delta>0$ such that
    $$
        \theta_2\in\Theta_0,\quad d_\Theta(\theta_1,\theta_2)\le\delta\quad\Longrightarrow\quad \int |L_n(x^n;\theta_1)-L_n(x^n;\theta_2)|\,\mu_n(dx^n)\,\le\,\varepsilon.
    $$
\end{itemize}
Then the sequence $(\widehat T_n)$ is finite-sample $(d_\Theta,\rho_{\mbox{\scriptsize{\rm P}}})$-robust on $\Theta_0$.
\end{theorem}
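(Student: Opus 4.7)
The plan is to deduce this from Theorem \ref{hampel-huber generalized - finite sample} by making the canonical choice $\Pi_n := (X_1,\ldots,X_n)$ and $\widehat t_n$ the map on $E^n$ underlying $\widehat T_n$, equipping $\Omega_n := E^n$ with the metric $d_{E^n}$ from (\ref{def metric on En}) so that $(\Omega_n, d_{E^n})$ is complete and separable (as $(E,d_E)$ is) and ${\cal F}_n = {\cal E}^{\otimes n}$ coincides with the Borel $\sigma$-field. Condition (c) of Theorem \ref{hampel-huber generalized - finite sample} is then literally condition (i) of the present theorem, so nothing has to be done there.

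The real content is verifying condition (d) of Theorem \ref{hampel-huber generalized - finite sample}, namely that $\theta \mapsto \pr^\theta \circ (X_1,\ldots,X_n)^{-1}$ is $(d_\Theta, \rho_n)$-continuous on $\Theta_0$ for every fixed $n$, where $\rho_n$ is any metric on ${\cal M}_1(E^n)$ metrizing the weak topology. The key observation is that for any two parameters $\theta_1, \theta_2 \in \Theta_0$, the total variation distance between the laws on $(E^n, {\cal E}^{\otimes n})$ satisfies
$$
    \big\|\pr^{\theta_1}\circ (X_1,\ldots,X_n)^{-1} - \pr^{\theta_2}\circ (X_1,\ldots,X_n)^{-1}\big\|_{\mathrm{TV}} \,=\, \tfrac{1}{2}\int |L_n(x^n;\theta_1)-L_n(x^n;\theta_2)|\,\mu_n(dx^n),
$$
which is precisely the quantity controlled by hypothesis (ii). Since convergence in total variation is strictly stronger than weak convergence on ${\cal M}_1(E^n)$, continuity in total variation (which is what (ii) gives us) implies $(d_\Theta, \rho_n)$-continuity, hence (d).

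With conditions (c) and (d) verified, Theorem \ref{hampel-huber generalized - finite sample} directly yields finite-sample $(d_\Theta, \rho)$-robustness of $(\widehat T_n)$ on $\Theta_0$ for every metric $\rho$ metrizing the weak topology, and in particular for $\rho = \rho_{\mbox{\scriptsize{\rm P}}}$. There is no real obstacle here; the only thing to check carefully is the passage from the $L^1(\mu_n)$-continuity of the densities to continuity of the laws in the weak topology, but this is standard via the total variation bound above. The theorem is therefore essentially a corollary of Theorem \ref{hampel-huber generalized - finite sample} combined with the elementary inequality relating $L^1$-distance of Radon--Nikodym densities to the total variation distance of the corresponding probability measures.
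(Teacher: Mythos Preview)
Your proof is correct and follows essentially the same route as the paper: both reduce to Theorem \ref{hampel-huber generalized - finite sample} via the canonical choice $\Pi_n=(X_1,\ldots,X_n)$, $\Omega_n=E^n$, and verify condition (d) by bounding the distance between $\pr^{\theta_1}\circ\Pi_n^{-1}$ and $\pr^{\theta_2}\circ\Pi_n^{-1}$ through the $L^1(\mu_n)$-distance of the densities. The only cosmetic difference is that the paper writes out the inequality $\pr^{\theta_1}\circ\Pi_n^{-1}[A]\le\pr^{\theta_2}\circ\Pi_n^{-1}[A]+\int|L_n(\cdot;\theta_1)-L_n(\cdot;\theta_2)|\,d\mu_n$ directly and reads off the Prohorov bound, whereas you phrase the same estimate via total variation and then invoke that total variation dominates weak convergence.
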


\begin{proof}
It suffices to check that conditions (c)--(d) in Theorem \ref{hampel-huber generalized - finite sample} hold. Condition (c) is nothing but assumption (i). So it suffices to show that condition (d) holds, that is, that the mapping
$$
    \theta\mapsto\pr^\theta\circ(X_1,\ldots,X_n)^{-1}[\,\cdot\,]\,=\,\int_{\cdot}L_n(x^n;\theta)\,\mu_n(dx^n)
$$
is $(d_\Theta,\rho_{n})$-continuous on $\Theta_0$ for every $n\in\N$. Without loss of generality we may assume that $\rho_{n}$ is the Prohorov metric (defined analogously to (\ref{def p metric})) on ${\cal M}_1(E^n)$. Let $\theta_1\in\Theta_0$, $\varepsilon>0$, and $n\in\N$ be fixed. By assumption (ii) we can find some $\delta>0$ such that for every $\theta_2\in\Theta_0$ with $d(\theta_1,\theta_2)\le\delta$ and $A\in{\cal E}^{\otimes n}$,
\begin{eqnarray*}
    \lefteqn{\pr^{\theta_1}\circ(X_1,\ldots,X_n)^{-1}[A]}\\
    & = & \int_A L_n(x^n;\theta_1)\,\mu_n(dx^n)\\
    & \le & \int_A L_n(x^n;\theta_2)\,\mu_n(dx^n)\,+\,\int|L_n(x^n;\theta_1)-L_n(x^n;\theta_2)|\,\mu_n(dx^n)\\
    & \le & \pr^{\theta_2}\circ(X_1,\ldots,X_n)^{-1}[A]\,+\,\varepsilon\\
    & \le & \pr^{\theta_2}\circ(X_1,\ldots,X_n)^{-1}[A^\varepsilon]\,+\,\varepsilon.
\end{eqnarray*}
That is, $\rho_{n}(\pr^{\theta_1}\circ(X_1,\ldots,X_n)^{-1},\pr^{\theta_2}\circ(X_1,\ldots,X_n)^{-1})\le\varepsilon$. This completes the proof.
\end{proof}

\begin{examplenorm}\label{hampel-huber generalized - parametric model - finite sample - remark}
Conditions (i)--(ii) in Theorem \ref{hampel-huber generalized - parametric model - finite sample} are easily checked in many specific situations. If $\pr^\theta$ is an infinite product measure of the shape $\pr^\theta=(\pr_1^\theta)^{\otimes\N}$ with $\pr_1^\theta[\,\cdot\,]:=\int_{\cdot}L_1(x^1;\theta)\,\mu_1(dx^1)$ (corresponding to i.i.d.\ observations drawn from $\pr_1^\theta$) for every $\theta\in\Theta$, then condition (ii) reduces to the condition (ii)' that for every $\theta_1\in\Theta_0$ and $\varepsilon>0$ there be some $\delta>0$ such that
\begin{equation}\label{hampel-huber generalized - parametric model - finite sample - remark - 10}
    \theta_2\in\Theta_0,\quad d_\Theta(\theta_1,\theta_2)\le\delta\quad\Longrightarrow\quad \int|L_1(x^1;\theta_1)-L_n(x^1;\theta_2)|\,\mu_1(dx^1)\,\le\,\varepsilon.
\end{equation}
If, for example, $\pr_1^\theta$ is the Bernoulli distribution ${\rm B}_{1,\theta}$, the Poisson distribution ${\rm Poiss}_\theta$, the exponential distribution ${\rm Exp}_\theta$, or the normal distribution ${\rm N}_{\theta,\sigma^2}$, then condition (ii)' is easily seen to hold for respectively $\Theta_0=(0,1)$, $\Theta_0=(0,\infty)$, $\Theta_0=(0,\infty)$, $\Theta_0=\R$, and condition (i) is easily seen to hold for the corresponding well known maximum likelihood estimators for $\theta$.
{\hspace*{\fill}$\Diamond$\par\bigskip}
\end{examplenorm}

Let us now assume that $\Sigma=\R$, and that the estimator $\widehat T_n$ is unbiased for $T(\theta)$ for every $n\in\N$. In this case we know from Theorem \ref{hampel-huber generalized} and Remark \ref{hampel-huber generalized - remark on variance} that the sequence $(\widehat T_n)$ is asymptotically $(d_\Theta,\rho_{\mbox{\scriptsize{\rm P}}})$-robust on $\Theta_0\subset\Theta$ if $T|_{\Theta_0}$ is continuous and if for every $\theta_1\in\Theta_0$ and $\varepsilon>0$ there are some $\delta>0$ and $n_0\in\N$ such that $\vari^{\theta_2}[\widehat T_n]\le\varepsilon$ holds for all $n\ge n_0$ and $\theta_2\in\Theta_0$ satisfying $d_\Theta(\theta_1,\theta_2)\le\delta$. If $\Theta$ is an open interval in $\R$, then under conditions (i)--(vi) in the following Theorem \ref{hampel-huber generalized - parametric model - asymptotically} (in this case we speak of a ``regular'' dominated statistical model) the commonly known Cramér--Rao--Fréchet information inequality
\begin{equation}\label{information inequality}
    \vari^\theta\big[\widehat T_n\big]\,\ge\,T'(\theta)^2/I_n(\theta)
\end{equation}
holds for every $\theta\in\Theta$ and $n\in\N$, where the Fisher information $I_n$ is defined in (\ref{def fisher information}) below. The estimator $\widehat T_n$ is said to be Cramér--Rao efficient for $T(\theta)$ if equality holds in (\ref{information inequality}).

\begin{theorem}\label{hampel-huber generalized - parametric model - asymptotically}
Take the notation from above, let $\Theta$ be an open interval in $\R$ (possibly infinite or semi-infinite) and $\Theta_0\subset\Theta$, and assume that the following eight conditions hold:
\begin{itemize}
    \item[(i)] For every $x^n\in E^n$ and $n\in\N$, the mapping $\theta\mapsto L_n(x^n;\theta)$ is strictly positive and continuously differentiable on $\Theta$.
    \item[(ii)] For every $\theta\in\Theta$ and $n\in\N$, we have that
    $$
        \int \frac{\partial}{\partial\theta} L_n(x^n;\theta)\,\mu_n(dx^n)\,=\,\frac{\partial}{\partial\theta}\int L_n(x^n;\theta)\,\mu_n(dx^n).
    $$
    \item[(iii)] For every $\theta\in\Theta$ and $n\in\N$, the Fisher information
    \begin{equation}\label{def fisher information}
        I_n(\theta)\,:=\,\int \Big(\frac{\partial}{\partial\theta}\log L_n(x^n;\theta)\Big)^2L_n(x^n;\theta)\,\mu_n(dx^n)
    \end{equation}
    is finite and strictly positive.
    \item[(iv)] $T$ is continuously differentiable with $T'(\theta)\not=0$ for all $\theta\in\Theta$.
    \item[(v)] For every $\theta\in\Theta$ and $n\in\N$, we have that $\ex^\theta[\widehat T_n]=T(\theta)$.
    \item[(vi)] For every $\theta\in\Theta$ and $n\in\N$, we have that
    $$
        \int \widehat T_n(x^n)\frac{\partial}{\partial\theta} L_n(x^n;\theta)\,\mu_n(dx^n)\,=\,\frac{\partial}{\partial\theta}\int \widehat T_n(x^n)L_n(x^n;\theta)\,\mu_n(dx^n).
    $$
    \item[(vii)] $\widehat T_n$ is Cramér--Rao efficient for $T(\theta)$ for every $n\in\N$.
    \item[(viii)] For every $\theta_1\in\Theta_0$ there is some $\delta>0$ such that
    $$
        \sup_{\theta\in\Theta_0:\,d_\Theta(\theta_1,\theta)\le\delta}T'(\theta)^2/I_1(\theta)\,<\,\infty.
    $$
\end{itemize}
Then the sequence $(\widehat T_n)$ is asymptotically $(|\cdot|,\rho_{\mbox{\scriptsize{\rm P}}})$-robust on $\Theta_0$.
\end{theorem}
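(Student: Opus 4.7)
The plan is to apply Theorem \ref{hampel-huber generalized} in the degenerate setting described as situation 1) in the discussion preceding it. That is, I would take $\Upsilon := \Sigma := \R$, $U := T$, $\widehat U_n := \widehat T_n$, and $V_n := \mathrm{id}$, so that decomposition (\ref{composition of estimator}) becomes trivial. Condition (a) of Theorem \ref{hampel-huber generalized} is then immediate: $T|_{\Theta_0}$ is $(|\cdot|,|\cdot|)$-continuous by assumption (iv) (in fact it is continuously differentiable), and the constant sequence of identity maps is trivially asymptotically continuous. By Remark \ref{hampel-huber generalized - remark on b} it suffices to check condition (b) for $\eta = \varepsilon/2$.

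In view of Remark \ref{hampel-huber generalized - remark on variance}, together with the unbiasedness assumption (v), condition (b) reduces to showing that the variance $\vari^{\theta_2}[\widehat T_n]$ can be made arbitrarily small, uniformly for $\theta_2$ in a small neighborhood of any given $\theta_1 \in \Theta_0$, provided $n$ is sufficiently large. Now, conditions (i)--(iii) and (v)--(vi) are exactly the standard regularity hypotheses under which the Cram\'er--Rao--Fr\'echet information inequality (\ref{information inequality}) is valid, and assumption (vii) strengthens this inequality to the equality $\vari^{\theta_2}[\widehat T_n] = T'(\theta_2)^2/I_n(\theta_2)$.

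The remaining, and most substantive, step is to translate the locally uniform bound on $T'(\theta)^2/I_1(\theta)$ provided by (viii) into a bound on $T'(\theta)^2/I_n(\theta)$ that vanishes as $n \to \infty$. Here the product structure of the model enters: when $L_n(x_1,\ldots,x_n;\theta) = \prod_{i=1}^n L_1(x_i;\theta)$, the score $\partial_\theta \log L_n = \sum_{i=1}^n \partial_\theta \log L_1(X_i;\theta)$ is a sum of $n$ independent mean-zero summands under $\pr^\theta$ (by (ii)), yielding the additive property $I_n(\theta) = n\,I_1(\theta)$. Combined with (viii) this gives $\vari^{\theta_2}[\widehat T_n] \le C/n$ uniformly on the neighborhood of $\theta_1$ supplied by (viii), so choosing $n_0$ with $C/n_0 \le \varepsilon$ completes the verification of condition (b). The main conceptual obstacle is thus the identification of the additive behavior of Fisher information coming from the product structure; once this is in place, the rest is straightforward bookkeeping through Markov's inequality and the cited remarks.
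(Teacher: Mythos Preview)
Your proposal is correct and follows essentially the same route as the paper's own proof: apply Theorem \ref{hampel-huber generalized} with $\Upsilon=\Sigma$, $U=T$, $\widehat U_n=\widehat T_n$, $V_n=\mathrm{id}$; get condition (a) from (iv); and get condition (b) from Remark \ref{hampel-huber generalized - remark on variance} via the Cram\'er--Rao equality (assumption (vii) together with (i)--(vi)), the additivity $I_n=nI_1$, and the local bound (viii). The paper simply cites $I_n(\cdot)=nI_1(\cdot)$ as ``well known'' without discussion, whereas you spell out that it rests on the product structure $L_n=\prod_{i=1}^n L_1(X_i;\theta)$; otherwise the arguments coincide.
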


\begin{proof}
It suffices to check that conditions (a)--(b) in Theorem \ref{hampel-huber generalized} hold. Conditions (a) in Theorem \ref{hampel-huber generalized} holds by assumptions (iv). Assumption (vii) (along with (i)--(vi)) ensures that equality holds in (\ref{information inequality}). Together with Remark \ref{hampel-huber generalized - remark on variance}, the well known identity $I_n(\cdot)=n I_1(\cdot)$, and assumption (viii), this ensures that assumption (b) in Theorem \ref{hampel-huber generalized} holds too.
\end{proof}

\begin{remarknorm}\label{hampel-huber generalized - parametric model - asymptotically - remark}
The considered dominated statistical model is said to be exponential when for every $n\in\N$ the Likelihood function $L_n$ has the shape
\begin{equation}\label{exponential likelihood}
    L_n(x^n;\theta)\,=\,h_n(x^n)\,c_n(\theta)\,e^{q_n(\theta)\widehat T_n(x^n)}\quad\mbox{ for all $x^n\in E^n$, $\theta\in\Theta$}
\end{equation}
for an $({\cal E}^{\otimes n},{\cal B}(\R))$-measurable function $h_n:E^n\to\R$ and two functions $c_n,q_n:\Theta\to\R$. If an exponential statistical model satisfies conditions (i)--(vi) in Theorem \ref{hampel-huber generalized - parametric model - asymptotically}, and $q_n(\Theta)$ is open and $q_n$ is continuously differentiable on $\Theta$ for every $n\in\N$, then the sequence $(\widehat T_n)$ satisfies condition (vii) in Theorem \ref{hampel-huber generalized - parametric model - asymptotically}; see, for instance, \cite{Wijsman1973}. Wijsman \cite{Wijsman1973} even showed that under conditions (i)--(vi) the representation (\ref{exponential likelihood}) (with $q_n$ continuously differentiable on $\Theta$) is not only sufficient but also necessary for condition (vii).
{\hspace*{\fill}$\Diamond$\par\bigskip}
\end{remarknorm}

\begin{examplenorm}\label{hampel-huber generalized - parametric model - asymptotically - example}
For instance, if $\pr_1^\theta$ is the Bernoulli distribution ${\rm B}_{1,\theta}$, the Poisson distribution ${\rm Poiss}_\theta$, the exponential distribution ${\rm Exp}_\theta$, or the normal distribution ${\rm N}_{\theta,\sigma^2}$, then the conditions of Theorem \ref{hampel-huber generalized - parametric model - asymptotically} are fulfilled for the corresponding maximum likelihood estimators for $\theta$ and for $\Theta_0=(0,1)$, $\Theta_0=(0,\infty)$, $\Theta_0=(0,\infty)$, and $\Theta_0=\R$, respectively. The validity of (vii) follows from Remark \ref{hampel-huber generalized - parametric model - asymptotically - remark} and the fact that each of these distributions induces an exponential statistical model (subject to conditions (i)--(vi)) for their maximum likelihood estimators. For (viii) note that the Fisher informations of these distributions are $I_n(\theta)=1/(\theta(1-\theta))$, $I_1(\theta)=1/\theta$, $I_1(\theta)=1/\theta^2$, and $I_n\equiv1/\sigma^2$, respectively.
{\hspace*{\fill}$\Diamond$\par\bigskip}
\end{examplenorm}


\subsection{Yule--Walker estimator for the parameter of a linear process}\label{Sec autoregressive process}

In this section we consider an example for a statistical model whose parameter space $\Theta$ is the product set of an interval in the real line and a ``nonparametric'' set of Borel probability measures on the real line. To some extent, this model can be referred to as semiparametric. To describe the model, assume that we can observe a linear process
\begin{equation}\label{def ar 1 process}
    X_n\,=\,\sum_{k=0}^\infty a^kZ_{n-k},\quad n\in\N
\end{equation}
for any $a\in(-1,1)$ and any sequence $(Z_k)_{k\in\Z}$ of i.i.d.\ real-valued random variables with expectation zero and a finite second moment distinct from zero, but we do not know the constant $a$ (and the distribution $\mu$ of $Z_1$). The assumption $|a|<1$ ensures that $(X_n)$ is a strictly stationary, zero mean $L^2$-process. In view of
$$
    \frac{\ex^{(a,\mu)}[X_1X_2]}{\ex^{(a,\mu)}[X_1^2]}\,=\,a\qquad\mbox{(if $a\not=0$)}
$$
a reasonable estimators for $a$ based on the first $n$ observations is given by $\widehat T_n(X_1,\ldots,X_n)$, where
\begin{equation}\label{def yule walker}
    \widehat T_n(x_1,\ldots,x_n)\,:=\,
        \left\{
        \begin{array}{lll}
            \frac{\frac{1}{n-1}\sum_{i=1}^{n-1}x_i\,x_{i+1}}{\frac{1}{n}\sum_{i=1}^{n}x_i^2} & , & \sum_{i=1}^{n}x_i^2>0\\
            0 & , & \mbox{else}
        \end{array}
    \right..
\end{equation}
This situation corresponds to a statistical model parameterized by the couple $(a,\mu)$, where the interest is in the aspect $T(a,\mu)=a$. Theorem \ref{hampel-huber generalized - Yule Walker} below shows that a small change in $a$ only leads to a small change in the distribution of the estimator $\widehat T_n$ uniformly in $n$. Since a small change in $a$ might come along with a small change in $\mu$, we also allow for small changes in $\mu$. However, one can also assume that $\mu$ is fixed if one starts from the premise that the distribution of the noise cannot change. In this case the change in the underlying model can be measured simply by the Euclidean distance $|a_1-a_2|$ and the semiparametric model turns into a parametric model with parameter space $(-1,1)$.

\begin{remarknorm}
When $a$ and $(Z_k)_{k\in\N}$ are as above, then the AR(1) process $X_n=a X_{n-1}+Z_n$, $n\in\N$, has the representation (\ref{def ar 1 process}); cf.\ Example 3.1.2 (or Theorem 3.1.1) in \cite{BrockwellDavis2006}. In this case, the estimator $\widehat T_n$ defined in (\ref{def yule walker}) is also called Yule--Walker estimator for $a$.
{\hspace*{\fill}$\Diamond$\par\bigskip}
\end{remarknorm}

Let $\Theta:=(-1,1)\times{\cal M}_1^{0,2}(\R)$, where ${\cal M}_1^{0,2}(\R)$ is the set of all probability measures on $\R$ with mean zero and a finite second moment distinct from zero. Choose the metric
$$
    d_\Theta((a_1,\mu_1),(a_2,\mu_2))\,:=\,\max\{|a_1-a_2|\,;\,d_{0,2}(\mu_1,\mu_2)\}
$$
on $\Theta$, where $d_{0,2}$ is any metric which metrizes the weak topology on ${\cal M}_1^{0,2}(\R)$. Let $(\Sigma,d_\Sigma)=((-1,1),|\cdot|)$, ${\cal S}={\cal B}((-1,1))$, and $T:\Theta\to(-1,1)$ be defined by $T(a,\mu):=a$.
Let $(\Omega,{\cal F}):=(\R^\N,{\cal B}(\R)^{\otimes\N})$, $X_i$ be the $i$-th coordinate projection on $\Omega$, and $\pr^{(a,\mu)}$ be the law on $(\Omega,{\cal F})$ of the linear process defined in (\ref{def ar 1 process}) when $Z_1$ is distributed according to $\mu$, $(a,\mu)\in\Theta$. Finally, for any $\Theta_0\subset\Theta$ let
$$
    {\cal M}_0\,:=\,\{\mu:(a,\mu)\in\Theta_0\mbox{ for some }a\in(-1,1)\}
$$
and recall from Definition \ref{def of uniformly psi integrating} the meaning of ``locally uniformly $\psi$-integrating''.

\begin{theorem}\label{hampel-huber generalized - Yule Walker}
Take the notation from above, let $\Theta_0\subset\Theta$, and assume that the set ${\cal M}_0$ is locally uniformly $\psi_2$-integrating with $\psi_2$ as in (\ref{def of psi p}) and that all elements of ${\cal M}_0$ are absolutely continuous w.r.t.\ the Lebesgue measure. Then the sequence $(\widehat T_n)$ is $(d_\Theta,\rho_{\mbox{\scriptsize{\rm P}}})$-robust on $\Theta_0$.
\end{theorem}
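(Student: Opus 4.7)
I would split the proof into the asymptotic part and the finite-sample part. For the asymptotic part I would apply Theorem~\ref{hampel-huber generalized} in the degenerate setting $\Upsilon:=\Sigma=(-1,1)$, $U(a,\mu):=a$, $V_n\equiv\mathrm{id}$, and $\widehat U_n:=\widehat T_n$. Condition (a) is immediate: $U$ is the projection on the first factor of $\Theta$ and hence $d_\Theta$-continuous. By Remark~\ref{hampel-huber generalized - remark on b}, condition (b) reduces to uniform weak consistency, i.e.\ for every $(a_1,\mu_1)\in\Theta_0$ and $\varepsilon>0$ one must find $\delta>0$ and $n_0\in\N$ such that
$$
   \pr^{(a_2,\mu_2)}\bigl[\,|\widehat T_n-a_2|\ge\varepsilon/2\,\bigr]\,\le\,\varepsilon\qquad\text{for all }n\ge n_0
$$
whenever $(a_2,\mu_2)\in\Theta_0$ and $d_\Theta((a_1,\mu_1),(a_2,\mu_2))\le\delta$.

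The algebraic key is that under $\pr^{(a_2,\mu_2)}$ one has $X_{i+1}-a_2X_i=Z_{i+1}$, whence
$$
   \widehat T_n-a_2\,=\,\frac{N_n}{D_n},\qquad D_n:=\frac{1}{n}\sum_{i=1}^n X_i^2,\qquad N_n:=\frac{1}{n-1}\sum_{i=1}^{n-1}X_iZ_{i+1}+a_2\,r_n,
$$
where $r_n=\frac{1}{n(n-1)}\sum_{i=1}^{n-1}X_i^2-X_n^2/n$ is a boundary correction with $\ex^{(a_2,\mu_2)}[|r_n|]=O(\gamma_2/n)$ and $\gamma_2:=\sigma_2^2/(1-a_2^2)$. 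Since $X_i$ is $\sigma(Z_j:j\le i)$-measurable and therefore independent of $Z_{i+1}$, the summands $X_iZ_{i+1}$ are pairwise uncorrelated and centered, so $\vari^{(a_2,\mu_2)}\big((n-1)^{-1}\sum X_iZ_{i+1}\big)=\sigma_2^2\gamma_2/(n-1)$, uniformly of order $1/n$ on any neighborhood on which $|a_2|\le r<1$ and $\sigma_2^2$ stays bounded. Chebyshev therefore controls $|N_n|$ uniformly.

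The critical step is a uniform lower bound for $D_n$, which I would establish by truncation. With $Y_i:=X_i^2\wedge K$ one has $D_n\ge\frac{1}{n}\sum Y_i$ and $\ex^{(a_2,\mu_2)}[Y_1]\ge \gamma_2-\ex^{(a_2,\mu_2)}[X_1^2\eins_{\{X_1^2>K\}}]$. To make the tail term small uniformly I would invoke the weighted Cauchy--Schwarz bound
$$
   X_1^2\,=\,\Bigl(\sum_{k\ge0}a_2^kZ_{1-k}\Bigr)^{\!2}\,\le\,\frac{1}{1-|a_2|}\sum_{k\ge 0}|a_2|^k\,Z_{1-k}^2
$$
together with local uniform $\psi_2$-integrability of $\mathcal M_0$ and the splitting $\ex[Z^2\eins_{\{W>L\}}]\le M\pr[W>L]+\ex[Z^2\eins_{\{Z^2>M\}}]$ with $W=\sum|a_2|^k Z_{1-k}^2$; this yields local uniform integrability of $X_1^2$. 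For the variance of $\frac{1}{n}\sum Y_i$, I would use that the representation (\ref{def ar 1 process}) coincides with the AR(1) recursion $X_n=a_2X_{n-1}+Z_n$, which, because $\mu_2$ is assumed absolutely continuous with respect to Lebesgue measure, is geometrically $\alpha$-mixing by classical Markov-chain ergodicity results. Boundedness of the $Y_i$ and standard covariance inequalities for mixing sequences then give $|\covi(Y_i,Y_j)|\le 4K^2\alpha^{(a_2,\mu_2)}(|i-j|)$ with a summable bound, hence $\vari(\frac{1}{n}\sum Y_i)=O(K^2/n)$ uniformly; Chebyshev completes the argument.

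For finite-sample robustness I would invoke Theorem~\ref{hampel-huber generalized - finite sample} with $\Omega_n:=\R^n$, $\Pi_n:=(X_1,\dots,X_n)$, and $\widehat t_n:=\widehat T_n$. Condition (d), weak continuity of $(a,\mu)\mapsto\pr^{(a,\mu)}\circ(X_1,\dots,X_n)^{-1}$, follows from the representation $(X_1,\dots,X_n)=\Phi_a((Z_k)_{k\le n})$ with $\Phi_a$ linear and continuous in $a$, plus $L^1$-truncation of the tail $\sum_{k>K}a^kZ_{-k}$ via local uniform $\psi_2$-integrability. Condition (c) is delicate because $\widehat T_n$ is continuous on $\R^n\setminus\{0\}$ only; absolute continuity of $\mu_2$, however, renders the joint law of $(X_1,\dots,X_n)$ absolutely continuous on $\R^n$, so $\{0\}$ is a null set for every $\pr^{(a,\mu)}\circ\Pi_n^{-1}$. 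I would handle this either by modifying $\widehat T_n$ continuously on an arbitrarily small ball around the origin (whose mass can be made small uniformly by (d)) or by a direct continuous-mapping argument that needs only a.e.\ continuity with respect to the limit measure. The main obstacle throughout is making the truncation-plus-mixing argument for $D_n$ work with only second moments available; that is precisely where the absolute-continuity hypothesis on $\mu$ (for mixing) and the local uniform $\psi_2$-integrability of $\mathcal M_0$ (for uniform tail control) jointly carry the proof.
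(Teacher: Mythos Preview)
Your scaffolding matches the paper's: asymptotic robustness via Theorem~\ref{hampel-huber generalized} with $\Upsilon=\Sigma$, $U(a,\mu)=a$, $V_n=\mathrm{id}$, $\widehat U_n=\widehat T_n$, and finite-sample robustness via Theorem~\ref{hampel-huber generalized - finite sample} together with a.e.\ continuity of $\widehat t_n$. The finite-sample part you sketch is essentially the paper's argument (it invokes Lemma~\ref{hampel-huber generalized - Yule Walker - Lemma 2} for condition (d) and the Continuous Mapping theorem in the form permitting a null set of discontinuities). Your martingale-style control of the numerator $N_n$ via the orthogonality of $(X_iZ_{i+1})_i$ is different from, and arguably slicker than, the paper's treatment of that term.

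The gap is in your control of the denominator $D_n$. You truncate at the $X$-level and then appeal to ``geometric $\alpha$-mixing by classical Markov-chain ergodicity results'' drawn from the absolute continuity of $\mu_2$ alone. Absolute continuity of the innovation density does not by itself guarantee \emph{geometric} mixing of the AR(1) chain (that typically requires a density bounded below on an interval, or a minorization condition), and more seriously you need $\sum_k\alpha^{(a_2,\mu_2)}(k)$ bounded \emph{uniformly} over a $d_\Theta$-neighbourhood of $(a_1,\mu_1)$ with $\mu_2$ varying only in the weak topology---no off-the-shelf result delivers this. The paper avoids the issue entirely: instead of writing $\widehat T_n-a=N_n/D_n$, it reduces condition (b) to a locally uniform weak law for the sample autocovariances $\frac{1}{n}\sum_i X_iX_{i+\ell}$, $\ell\in\{0,1\}$ (Lemma~\ref{hampel-huber generalized - Yule Walker - Lemma}), proved by expanding $X_iX_{i+\ell}$ as a double series in the i.i.d.\ $Z_j$'s, truncating at the $Z$-level via $Z_j^u:=Z_j\eins_{\{|Z_j|\le u\}}$, and computing covariances of the bounded products $Z_i^uZ_j^u$ directly from independence. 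That yields variance bounds of order $u^4/n$ with constants that are manifestly uniform once $|a_2|$ is bounded away from $1$ and second moments are locally controlled via Lemma~\ref{weak and psi weak topology}; in the paper the absolute-continuity hypothesis is used only to make $\{\sum_i X_i^2=0\}$ a null set, not to obtain mixing. If you wish to keep your $X$-level truncation, replace the mixing step by an $m$-dependent approximation $X_i^{(m)}:=\sum_{k=0}^m a_2^kZ_{i-k}$ with an $L^2$ remainder controlled uniformly through $|a_2|\le r<1$; that is essentially the paper's approach reorganised.
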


The proof of Theorem \ref{hampel-huber generalized - Yule Walker} relies on the following two lemmas.

\begin{lemma}\label{hampel-huber generalized - Yule Walker - Lemma}
Take the notation from above, let $\Theta_0\subset\Theta$, and assume that the set ${\cal M}_0$ is locally uniformly $\psi_2$-integrating. Let $\ell\in\N_0$. Then for every $(a_1,\mu_1)\in\Theta_0$, $\varepsilon>0$, and $\eta>0$ there exist some $\delta>0$ and $n_0\in\N$ such that for every $n\ge n_0$ and $(a_2,\mu_2)\in\Theta_0$ with $d_\Theta((a_1,\mu_1),(a_2,\mu_2))\le\delta$ we have that
\begin{equation}\label{hampel-huber generalized - Yule Walker - Lemma - EQ}
    \pr^{(a_2,\mu_2)}\Big[\Big|\frac{1}{n}\sum_{i=1}^nX_{i}X_{i+\ell}-\ex^{(a_2,\mu_2)}[X_1X_{1+\ell}]\Big|\ge\eta\Big]\,\le\,\varepsilon.
\end{equation}
When the set ${\cal M}_0$ is even uniformly $\psi_2$-integrating, then for every $\alpha\in(0,1)$, $\varepsilon>0$, and $\eta>0$ there exists some $n_0\in\N$ such that the inequality (\ref{hampel-huber generalized - Yule Walker - Lemma - EQ}) holds for all $n\ge n_0$ and $(a_2,\mu_2)\in\Theta_0^\alpha\,:=\{(a,\mu)\in\Theta_0: |a|\le\alpha\}$.
\end{lemma}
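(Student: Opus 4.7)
The plan is to establish the concentration bound via a truncation of the innovations combined with Chebyshev's inequality. A direct variance bound on $\frac{1}{n}\sum_{i=1}^{n}X_iX_{i+\ell}$ would require finite fourth moments of the innovations $Z_k$, which the $\psi_2$-integrability hypothesis does not provide; I therefore first reduce to the bounded-innovation case.

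Fix $(a_1,\mu_1)\in\Theta_0$ and choose $\delta_0>0$ small enough that $|a_2|\le|a_1|+\delta_0=:\alpha_1<1$ whenever $d_\Theta((a_1,\mu_1),(a_2,\mu_2))\le\delta_0$. For $M>0$, decompose $Z_k=Z_k^M+R_k^M$, where $Z_k^M:=Z_k\eins_{\{|Z_k|\le M\}}-\ex^{\mu_2}[Z_k\eins_{\{|Z_k|\le M\}}]$ is bounded and centered, and set $X_i^M:=\sum_{k\ge 0}a_2^kZ_{i-k}^M$ and $\widetilde X_i^M:=X_i-X_i^M$. Since the $R_k^M$ are i.i.d.\ and centered, $\|\widetilde X_i^M\|_{L^2(\pr^{(a_2,\mu_2)})}^2=\|R_0^M\|_{L^2(\mu_2)}^2/(1-a_2^2)\le\|R_0^M\|_{L^2(\mu_2)}^2/(1-\alpha_1^2)$, and the local uniform $\psi_2$-integrability of ${\cal M}_0$ forces $\lim_{M\to\infty}\sup\{\|R_0^M\|_{L^2(\mu_2)}:\mu_2\in{\cal M}_0,\,d_{0,2}(\mu_1,\mu_2)\le\delta_0\}=0$. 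On the other hand, $|X_i^M|\le 2M/(1-\alpha_1)$ has all moments, and a direct covariance expansion on the MA representation (each non-vanishing fourth-order term in the $Z_k^M$ forces the exponent of $a_2$ to be at least $|h|$) yields $|\covi(X_1^MX_{1+\ell}^M,X_{1+h}^MX_{1+h+\ell}^M)|\le C_1(M,\alpha_1)\,\alpha_1^{|h|}$, whence $\vari^{(a_2,\mu_2)}\big(\frac{1}{n}\sum_{i=1}^{n}X_i^MX_{i+\ell}^M\big)\le C_2(M,\alpha_1)/n$ uniformly in the admissible $(a_2,\mu_2)$.

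Combining these ingredients, one first chooses $M$ so large that the Cauchy--Schwarz estimate $\ex^{(a_2,\mu_2)}|X_iX_{i+\ell}-X_i^MX_{i+\ell}^M|\le 2\sigma(\mu_2)\|R_0^M\|_{L^2(\mu_2)}/(1-\alpha_1^2)$ and its analogue for $|\ex^{(a_2,\mu_2)}[X_1X_{1+\ell}]-\ex^{(a_2,\mu_2)}[X_1^MX_{1+\ell}^M]|$ are both below $\eta/3$; Markov's inequality then bounds the contribution of the truncation error to the deviation probability by $\varepsilon/2$, uniformly in the chosen neighborhood. For the truncated sample mean, Chebyshev together with the variance bound above yields an $n_0$ such that the remaining deviation is at most $\varepsilon/2$ for all $n\ge n_0$, proving (\ref{hampel-huber generalized - Yule Walker - Lemma - EQ}). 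The main obstacle is making every estimate uniform in $(a_2,\mu_2)$: local uniform $\psi_2$-integrability of ${\cal M}_0$ handles the innovation moments, while restricting to $d_\Theta((a_1,\mu_1),(a_2,\mu_2))\le\delta_0$ keeps $|a_2|\le\alpha_1<1$ and so controls the geometric factors in the covariance bound. For the uniform version, the same argument applies globally on $\Theta_0^\alpha$ after replacing $\alpha_1$ by $\alpha$, so that $n_0$ depends only on $\alpha$, $\varepsilon$, $\eta$, and the uniform $\psi_2$-integrability modulus of ${\cal M}_0$.
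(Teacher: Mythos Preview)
Your strategy is correct and shares the paper's backbone: truncate the innovations, use local uniform $\psi_2$-integrability to make the truncation remainder small in $L^2$ uniformly over a neighbourhood, and then control the bounded truncated part by a second-moment (Chebyshev) argument. The organization differs, however. The paper first expands $X_iX_{i+\ell}=\sum_{k,m}a^{k+m}Z_{i-k}Z_{i+\ell-m}$, applies a union bound with weights $q^{k+m}(1-q)^2$ to pass to individual innovation products, and only then truncates each $Z_jZ_k$; this makes the summation over $(k,m)$ very explicit but requires the auxiliary parameter $q\in(|a_2|,1)$ to keep the resulting series of probabilities finite. Your route truncates at the process level, defining $X_i^M$ directly and bounding the remainder $\widetilde X_i^M$ in $L^2$; this avoids the $q$-trick and the countable union bound, at the cost of a slightly more delicate fourth-moment covariance expansion for the truncated process. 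Both arguments are standard for linear processes and yield the same conclusion.

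Two small points to tighten. First, fixing $\delta_0$ at the outset and then asserting $\lim_{M\to\infty}\sup_{d_{0,2}(\mu_1,\mu_2)\le\delta_0}\|R_0^M\|_{L^2(\mu_2)}=0$ is a claim of \emph{uniform} $\psi_2$-integrability on a fixed ball, which is stronger than the \emph{locally} uniform hypothesis; the latter only guarantees that for each target level there exist a $\delta$ and an $M$. The paper handles this by shrinking $\delta$ successively ($\delta_1\ge\delta_2\ge\delta_3\ge\delta_4$) as each new requirement appears, and you should do the same: fix $\alpha_1\in(|a_1|,1)$ first, determine the needed size for $\|R_0^M\|_{L^2}$ from $\eta,\varepsilon,\alpha_1$ and a bound $C(\mu_1)\ge\sigma(\mu_2)$, then invoke local uniform $\psi_2$-integrability to obtain the corresponding $\delta$ and $M$. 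Second, for Markov's inequality to bound the first truncation term by $\varepsilon/2$ you need $\ex|X_iX_{i+\ell}-X_i^MX_{i+\ell}^M|\le\eta\varepsilon/6$, not merely $\le\eta/3$; only the deterministic term $|\ex[X_1X_{1+\ell}]-\ex[X_1^MX_{1+\ell}^M]|$ needs $\le\eta/3$.
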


\begin{proof}
Let $(\overline{\Omega},\overline{{\cal F}}):=(\R^\Z,{\cal B}(\R)^{\otimes\Z})$, $Z_k$ be the $k$-th coordinate projection on $\overline{\Omega}$, and $\overline{\pr}^{\mu}:=\mu^{\otimes\Z}$, $\mu\in{\cal M}_1^{0,2}(\R)$. Then, for every $(a,\mu)\in\Theta$, we have
\begin{eqnarray}
    \lefteqn{\pr^{(a,\mu)}\Big[\Big|\frac{1}{n}\sum_{i=1}^nX_{i}X_{i+\ell}-\ex^{(a,\mu)}[X_1X_{1+\ell}]\Big|\ge\eta\Big]}\nonumber\\
    & = & \overline{\pr}^\mu\Big[\Big|\frac{1}{n}\sum_{i=1}^n\sum_{k=0}^\infty\sum_{m=0}^\infty a^ka^mZ_{i-k}Z_{i+\ell-m}-\sum_{k=0}^\infty\sum_{m=0}^\infty a^ka^m\overline{\ex}^{\mu_2}[Z_{-k} Z_{\ell-m}]\Big|\ge\eta\Big]\nonumber\\
    & = & \overline{\pr}^\mu\Big[\Big|\sum_{k=0}^\infty\sum_{m=0}^\infty a^{k+m}\,\frac{1}{n}\sum_{i=1}^n\big(Z_{i-k}Z_{i+\ell-m}-\overline{\ex}^{\mu_2}[Z_{-k}Z_{\ell-m}]\big)\Big|\ge\eta\Big]\nonumber\\
    & =: & P(n,a,\mu,\eta).\nonumber
\end{eqnarray}
Let $(a_1,\mu_1)\in\Theta_0$, $\varepsilon>0$, and $\eta>0$ be fixed. In the following we will show that there exist $\delta>0$ and $n_0\in\N$ such that for all $n\ge n_0$ and $(a_2,\mu_2)\in\Theta_0$ with $d_\Theta((a_1,\mu_1),(a_2,\mu_2))\le\delta$ we have that $P(n;a_2,\mu_2;\eta)\le\varepsilon$.

For every $(a_2,\mu_2)\in\Theta_0$ (with $a_2\not=0$) and $q\in(0,1)$ we have
\begin{eqnarray}
    \lefteqn{P(n,a_2,\mu_2,\eta)}\nonumber\\
    & \le & \overline{\pr}^{\mu_2}\Big[\sum_{k=0}^\infty\sum_{m=0}^\infty |a_2|^{k+m}\Big|\frac{1}{n}\sum_{i=1}^n\big(Z_{i-k}Z_{i+\ell-m}-\overline{\ex}^{\mu_2}[Z_{-k}Z_{\ell-m}]\big)\Big|\ge\eta\Big]\nonumber\\
    & \le & \sum_{k=0}^\infty\sum_{m=0}^\infty\overline{\pr}^{\mu_2}\Big[|a_2|^{k+m}\Big|\frac{1}{n}\sum_{i=1}^n\big(Z_{i-k}Z_{i+\ell-m}-\overline{\ex}^{\mu_2}[Z_{-k}Z_{\ell-m}]\big)\Big|\ge \eta\,q^{k+m}(1-q)^2\Big]\nonumber\\
   & = & \sum_{k=0}^\infty\sum_{m=0}^\infty\overline{\pr}^{\mu_2}\Big[\Big|\frac{1}{n}\sum_{i=1}^n\big(Z_{i-k}Z_{i+\ell-m}-\overline{\ex}^{\mu_2}[Z_{-k}Z_{\ell-m}]\big)\Big|\ge \eta\,|q/a_2|^{k+m}(1-q)^2\Big].\nonumber
\end{eqnarray}
For every $u>0$ and $j\in\Z$, we set $Z_j^u=Z_j\eins_{\{|Z_j|\le u\}}$. Then $Z_jZ_k$ has the representation $Z_jZ_k=Z_j^uZ_k^u+Z_jZ_k\eins_{\{|Z_j|>u\}\cup\{|Z_k|>u\}}$. In particular,
\begin{eqnarray}
    \lefteqn{P(n,a_2,\mu_2,\eta)}\nonumber\\
    & \le & \sum_{k=0}^\infty\sum_{m=0}^\infty\overline{\pr}^{\mu_2}\Big[\Big|\frac{1}{n}\sum_{i=1}^n\big(Z_{i-k}^uZ_{i+\ell-m}^u-\overline{\ex}^{\mu_2}[Z_{-k}^uZ_{\ell-m}^u]\big)\Big|\ge \frac{\eta\,|q/a_2|^{k+m}(1-q)^2}{3}\Big]\nonumber\\
    & & +\,\sum_{k=0}^\infty\sum_{m=0}^\infty\overline{\pr}^{\mu_2}\Big[\Big|\frac{1}{n}\sum_{i=1}^nZ_{i-k}Z_{i+\ell-m}\eins_{\{|Z_{i-k}|>u\}\cup\{|Z_{i+\ell-m}|>u\}}\Big|\ge \frac{\eta\,|q/a_2|^{k+m}(1-q)^2}{3}\Big]\nonumber\\
    & & +\,\sum_{k=0}^\infty\sum_{m=0}^\infty\overline{\pr}^{\mu_2}\Big[\overline{\ex}^{\mu_2}\big[Z_{-k}Z_{\ell-m}\eins_{\{|Z_{-k}|>u\}\cup\{|Z_{\ell-m}|>u\}}\big] \ge \frac{\eta\,|q/a_2|^{k+m}(1-q)^2}{3}\Big]\nonumber\\
    & =: & P_{1}(n,u,q,a_2,\mu_2,\eta)\,+\,P_{2}(n,u,q,a_2,\mu_2,\eta)\,+\,P_{3}(u,q,a_2,\mu_2,\eta).\label{hampel-huber generalized - Yule Walker - PROOF - 20}
\end{eqnarray}
Let $\delta_1\in(0,1-|a_1|)$ and set $q_1:=(1+|a_1|+\delta_1)/2$. Then $q_1\in(|a_1|+\delta_1,1)$. In particular, $|a_2|/q_1\le (|a_1|+\delta_1)/q_1=:q_0\in(0,1)$ when $|a_1-a_2|\le\delta_1$. Since ${\cal M}_0$ is locally uniformly $\psi_2$-integrating, Lemma \ref{weak and psi weak topology} enures that the mapping ${\cal M}_0\ni\mu\mapsto\overline{\ex}^\mu[Z_1^2]$ is $(d_{0,2},|\cdot|)$-continuous, and therefore we can find some $\delta_2\in(0,\delta_1]$ such that $\overline{\ex}^{\mu_2}[Z_1^2]\le \overline{\ex}^{\mu_1}[Z_1^2]+1=:C(\mu_1)$ for all $\mu_2\in\Theta_0$ with $d_{0,2}(\mu_1,\mu_2)\le\delta_2$. Since ${\cal M}_0$ is locally uniformly $\psi_2$-integrating, we can also find some $\delta_3\in(0,\delta_2]$ and $u_0>0$ such that for every $u\ge u_0$ and $\mu_2\in{\cal M}_0$ with $d_{0,2}(\mu_1,\mu_2)\le\delta_3$ we have that $\overline{\ex}^{\mu_2}[Z_{1}^2\eins_{\{|Z_{1}|>u\}}]\le\eta^2(1-q_0)^4(1-q_1)^4 C(\mu_1)^{-1}\,\varepsilon^2/144$. Then, using Markov's inequality and H\"older's inequality, we obtain for every $n\in\N$, $u\ge u_0$, and $(a_2,\mu_2)\in\Theta_0$ with $d_{\Theta}((a,\mu_1),(a_2,\mu_2))\le\delta_3$,
\begin{eqnarray}
    \lefteqn{P_{2}(n,u,q_1,a_2,\mu_2,\eta)}\nonumber\\
    & \le & 3\eta^{-1}(1-q_1)^{-2}\max\big\{\overline{\ex}^{\mu_2}\big[|Z_{0}Z_{1}|\eins_{\{|Z_{0}|>u\}\cup\{|Z_{1}|>u\}}\big]\,;\, \overline{\ex}^{\mu_2}\big[Z_{1}^2\eins_{\{|Z_{1}|>u\}}\big]\big\}\sum_{k=0}^\infty\sum_{m=0}^\infty q_0^{k+m}\nonumber\\
    & = & 3\eta^{-1}(1-q_1)^{-2}\max\big\{\overline{\ex}^{\mu_2}\big[|Z_{0}Z_{1}|\eins_{\{|Z_{0}|>u\}\cup\{|Z_{1}|>u\}}\big]\,;\, \overline{\ex}^{\mu_2}\big[Z_{1}^2\eins_{\{|Z_{1}|>u\}}\big]\big\}\,(1-q_0)^{-2}\nonumber\\
    & \le & 3\eta^{-1}(1-q_1)^{-2}\max\big\{\overline{\ex}^{\mu_2}\big[|Z_{0}Z_{1}|(\eins_{\{|Z_{0}|>u\}}+\eins_{\{|Z_{1}|>u\}})\big]\,;\, \overline{\ex}^{\mu_2}\big[Z_{1}^2\eins_{\{|Z_{1}|>u\}}\big]\big\}\,(1-q_0)^{-2}\nonumber\\
    & \le & 3\eta^{-1}(1-q_1)^{-2}\,2C(\mu_1)^{1/2}\,\overline{\ex}^{\mu_2}\big[|Z_{1}^2|\eins_{\{|Z_{1}|>u\}}\big]^{1/2}\,(1-q_0)^{-2}\nonumber\\
    & \le & \varepsilon/2.\label{hampel-huber generalized - Yule Walker - PROOF - 30}
\end{eqnarray}
By the assumption that ${\cal M}_0$ is locally uniformly $\psi_2$-integrating, and using arguments as for $P_{2}(n,u,q_1,a_2,\mu_2,\eta)$, we can also find some $\delta_4\in(0,\delta_3]$ and $u_1\ge u_0$ such that for every $k,m\in\Z$ and  $\mu_2\in\Theta_0$ with $d_{0,2}(\mu_1,\mu_2)\le\delta_4$ we have $\overline{\ex}^{\mu_2}\big[Z_{-k}Z_{\ell-m}\eins_{\{|Z_{-k}|>u\}\cup\{|Z_{\ell-m}|>u\}}\big]<\eta(1-q_1)^{2}/(3q_0)$. Since $\eta\,|q_1/a_2|^{k+m}(1-q_1)^2/3>\eta(1-q_1)^2/(3q_0)$ when $|a_1-a_2|\le\delta_4$, it follows that for every $(a_2,\mu_2)\in\Theta_0$ with $d_\Theta((a_1,\mu_1),(a_2,\mu_2))\le\delta_4$,
\begin{equation}\label{hampel-huber generalized - Yule Walker - PROOF - 40}
    P_{3}(u_1,q_1,a_2,\mu_2,\eta)\,=\,0.
\end{equation}
Further, let $n_0\in\N$ such that $n_0\ge 108\eta^{-2}(1-q_1)^{-4}(1-q_0^2)^{-2}u_1^2\,\varepsilon^{-1}$. Then, by Markov's inequality we obtain for every $n\ge n_0$ and $(a_2,\mu_2)\in\Theta_0$ with $d_{\Theta}((a,\mu_1),(a_2,\mu_2))\le\delta_4$,
\begin{eqnarray}
    \lefteqn{P_{1}(n,u_1,q_1,a_2,\mu_2,\eta)}\nonumber\\
    & \le & 9\eta^{-2}n^{-2}(1-q_1)^{-4}\sum_{k=0}^\infty\sum_{m=0}^\infty     q_0^{2(k+m)}\,\overline{\ex}^{\mu_2}\Big[\Big(\sum_{i=1}^n\big(Z_{i-k}^{u_1}Z_{i+\ell-m}^{u_1}-\overline{\ex}^{\mu_2}[Z_{-k}^{u_1}Z_{\ell-m}^{u_1}]\big)\Big)^2\Big]\nonumber\\
    & \le & 9\eta^{-2}(1-q_1)^{-4}(1-q_0^2)^{-2}\,n^{-2}\sup_{k,m\in\Z} \,\overline{\ex}^{\mu_2}\Big[\Big(\sum_{i=1}^n\big(Z_{i}^{u_1}Z_{i+k+\ell-m}^{u_1}-\overline{\ex}^{\mu_2}[Z_{0}^{u_1}Z_{k+\ell-m}^{u_1}]\big)\Big)^2\Big]\nonumber\\
    & = & 9\eta^{-2}(1-q_1)^{-4}(1-q_0^2)^{-2}\,n^{-2}\sup_{k,m\in\Z} \,\sum_{i=1}^n\sum_{j=1}^n\overline{\covi}^{\mu_2}\big(Z_{i}^{u_1}Z_{i+k+\ell-m}^{u_1},Z_{j}^{u_1}Z_{j+k+\ell-m}^{u_1}\big)\nonumber\\
    & = & 9\eta^{-2}(1-q_1)^{-4}(1-q_0^2)^{-2}\,n^{-2}\sup_{k,m\in\Z} \,\sum_{i=1}^n\sum_{j\in J(i,k,m)}\overline{\covi}^{\mu_2}\big(Z_{i}^{u_1}Z_{i+k+\ell-m}^{u_1},Z_{j}^{u_1}Z_{j+k+\ell-m}^{u_1}\big)\nonumber\\
    & \le & 9\eta^{-2}(1-q_1)^{-4}(1-q_0^2)^{-2}\,n^{-2}\,(n\cdot3\cdot2u_1^2)\nonumber\\
    & \le & \varepsilon/2\label{hampel-huber generalized - Yule Walker - PROOF - 50}
\end{eqnarray}
with $J(i,k,m):=\{i,i+k+\ell-m,i-k-\ell+m\}$. Altogether, (\ref{hampel-huber generalized - Yule Walker - PROOF - 20})--(\ref{hampel-huber generalized - Yule Walker - PROOF - 50}) imply that for every $n\ge n_0$ and $(a_2,\mu_2)\in\Theta_0$ with $d_\Theta((a_1,\mu_1),(a_2,\mu_2))\le\delta:=\delta_4$ we have that $P(n,a_2,\mu_2,\eta)\le\varepsilon$. This proves the first claim of the lemma. The second claim of the lemma can be shown analogously.
\end{proof}

\begin{lemma}\label{hampel-huber generalized - Yule Walker - Lemma 2}
Take the notation from above, let $\Theta_0\subset\Theta$, and assume that the set ${\cal M}_0$ is locally uniformly $\psi_1$-integrating with $\psi_1$ as in (\ref{def of psi p}). Then, for every $n\in\N$, the mapping
$$
    \Theta_0\ni(a,\mu)\,\longmapsto\,\pr^{(a,\mu)}\circ(X_1,\ldots,X_n)^{-1}
$$
is $(d_\Theta,\rho_{n})$-continuous, where $\rho_{n}$ refers to any metric on ${\cal M}_1(\R^n)$ which metrizes the weak topology.
\end{lemma}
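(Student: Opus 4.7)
The plan is to reduce the required weak continuity to a truncation argument on the canonical noise space. Fix $(a,\mu)\in\Theta_0$ and any sequence $(a_m,\mu_m)\in\Theta_0$ with $d_\Theta((a_m,\mu_m),(a,\mu))\to 0$. I would realize all processes on $(\overline{\Omega},\overline{{\cal F}})=(\R^{\Z},{\cal B}(\R)^{\otimes\Z})$ with coordinates $(Z_k)_{k\in\Z}$, noise law $\overline{\pr}^{\mu_m}=\mu_m^{\otimes\Z}$ resp.\ $\overline{\pr}^\mu=\mu^{\otimes\Z}$, so that the vectors of interest are $X_i^m=\sum_{k\ge 0}a_m^k Z_{i-k}$ and $X_i=\sum_{k\ge 0}a^k Z_{i-k}$. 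For $N\in\N$ introduce the truncations
$$
    X_i^{m,(N)}\,:=\,\sum_{k=0}^N a_m^k Z_{i-k},\qquad X_i^{(N)}\,:=\,\sum_{k=0}^N a^k Z_{i-k},\qquad i=1,\ldots,n.
$$

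For each fixed $N$ the map
$$
    \Phi_N:\R\times\R^{n+N}\longrightarrow\R^n,\qquad (a;z_{1-N},\ldots,z_n)\longmapsto\Big(\sum_{k=0}^N a^k z_{i-k}\Big)_{i=1}^n
$$
is jointly continuous. Since $\mu_m^{\otimes(n+N)}\to\mu^{\otimes(n+N)}$ weakly on $\R^{n+N}$ and $a_m\to a$, the Skorokhod representation applied to the i.i.d.\ marginals (or equivalently the extended continuous-mapping theorem) yields $(X_1^{m,(N)},\ldots,X_n^{m,(N)})\to(X_1^{(N)},\ldots,X_n^{(N)})$ weakly on $\R^n$ as $m\to\infty$, for every fixed $N$.

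Next I would control the tails uniformly in $m$. Pick $\alpha\in(|a|,1)$ so that $|a_m|\le\alpha$ for all $m$ large. Lemma~\ref{weak and psi weak topology} applied with $\psi=\psi_1$ says that local uniform $\psi_1$-integrability of ${\cal M}_0$ forces the relative weak and the relative $\psi_1$-weak topologies on ${\cal M}_0$ to coincide, so $\nu\mapsto\int|z|\,\nu(dz)$ is continuous at $\mu$ along ${\cal M}_0$ and hence $C:=\sup_{m\text{ large}}\overline{\ex}^{\mu_m}[|Z_1|]<\infty$. By Tonelli, for $m$ large and every $i\in\{1,\ldots,n\}$,
$$
    \overline{\ex}^{\mu_m}\big[\,|X_i^m-X_i^{m,(N)}|\,\big]\,\le\,\sum_{k=N+1}^\infty|a_m|^k\,\overline{\ex}^{\mu_m}[|Z_1|]\,\le\,\frac{C\,\alpha^{N+1}}{1-\alpha},
$$
which vanishes as $N\to\infty$ uniformly in $m$; the same bound with $a$ in place of $a_m$ gives $\overline{\ex}^\mu[|X_i-X_i^{(N)}|]\to 0$. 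By Markov this delivers convergence in probability of the truncations to the full vectors, uniformly in $m$.

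These two ingredients---weak convergence of the truncated laws for each fixed $N$, together with uniform-in-$m$ approximation of $(X_1^m,\ldots,X_n^m)$ by $(X_1^{m,(N)},\ldots,X_n^{m,(N)})$ in probability---combine by a standard three-$\varepsilon$ argument (e.g.\ Theorem~3.2 in Billingsley's \emph{Convergence of Probability Measures}) to yield the desired weak convergence $(X_1^m,\ldots,X_n^m)\to(X_1,\ldots,X_n)$ on $\R^n$. The only delicate point is the joint continuous-mapping step, since both $a_m$ and the underlying law $\mu_m$ drift with $m$; Skorokhod representation collapses this to an elementary almost-sure continuity, so no further technical obstacle is anticipated.
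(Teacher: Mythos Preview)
Your argument is correct and follows essentially the same route as the paper: truncate the moving-average representation at a finite level, use continuous mapping for the truncated vectors, and control the remainder uniformly in $m$ via the first-moment bound supplied by local uniform $\psi_1$-integrability (through Lemma~\ref{weak and psi weak topology}). The only cosmetic difference is that the paper first separates the variation in $a$ from the variation in $\mu$ by working with bounded Lipschitz test functions and an explicit three-term split, whereas you handle both simultaneously via Skorokhod/extended continuous mapping and then invoke Billingsley's Theorem~3.2; both packagings amount to the same three-$\varepsilon$ estimate.
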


\begin{proof}
Let $(a,\mu)\in\Theta_0$, $(a_m,\mu_m)\subset\Theta_0$, and $n\in\N$. Assume that $d_\Theta((a_m,\mu_m),(a,\mu))\to0$, that is, $a_m\to a$ and $\mu_m$ converges weakly to $\mu$. We have to show that the probability measure $\pr^{(a_m,\mu_m)}\circ(X_1,\ldots,X_n)^{-1}$ converges weakly to $\pr^{(a,\mu)}\circ(X_1,\ldots,X_n)^{-1}$. Let $f:\R^n\to\R$ be a bounded and Lipschitz continuous function; the Lipschitz constant will be denoted by $L$. We have
\begin{eqnarray*}
    \lefteqn{\big|\ex^{(a_m,\mu_m)}[f(X_1,\ldots,X_n)]\,-\,\ex^{(a,\mu)}[f(X_1,\ldots,X_n)]\big|}\\
    & = & \Big|\int f\Big(\sum_{k=0}^\infty a_m^kz_{1-k}\,,\ldots,\sum_{k=0}^\infty a_m^kz_{n-k}\Big)\,\mu_{m}^{\otimes\Z}(d(z_j)_{j\in\Z})\\
    & & \qquad-\,\int f\Big(\sum_{k=0}^\infty a^kz_{1-k}\,,\ldots,\sum_{k=0}^\infty a^kz_{n-k}\Big)\,\mu^{\otimes\Z}(d(z_j)_{j\in\Z})\Big|\\
    & \le & \int \Big|f\Big(\sum_{k=0}^\infty a_m^kz_{1-k}\,,\ldots,\sum_{k=0}^\infty a_m^kz_{n-k}\Big)\\
    & & \qquad-\,f\Big(\sum_{k=0}^\infty a^kz_{1-k}\,,\ldots,\sum_{k=0}^\infty a^kz_{n-k}\Big)\Big|\,\mu_m^{\otimes\Z}(d(z_j)_{j\in\Z})\\
    & & +\,\Big|\int f\Big(\sum_{k=0}^\infty a^kz_{1-k}\,,\ldots,\sum_{k=0}^\infty a^kz_{n-k}\Big)\Big(\mu_m^{\otimes\Z}(d(z_j)_{j\in\Z})-\mu^{\otimes\Z}(d(z_j)_{j\in\Z})\Big)\Big|\\
    & =: & S_1(m)\,+\,S_2(m).
\end{eqnarray*}
On the one hand, using the Lipschitz continuity of $f$ and the Mean value theorem, we obtain
\begin{eqnarray*}
    S_1(m)
    & \le & \int L\sum_{i=1}^n\Big|\sum_{k=1}^\infty (a_m^k-a^k)z_{i-k}\Big|\,\mu_m^{\otimes\Z}(d(z_j)_{j\in\Z})\\
    & \le & L\int\sum_{i=1}^n\sum_{k=1}^\infty k\max\{|a|;|a_m|\}^{k-1}|a_m-a|\,|z_{i-k}|\,\mu_m^{\otimes\Z}(d(z_j)_{j\in\Z})\\
    & \le & \Big(Ln\int |z|\,\mu_m(dz)\sum_{k=1}^\infty k\max\{|a|;|a_m|\}^{k-1}\Big)|a_m-a|.
\end{eqnarray*}
By Lemma \ref{weak and psi weak topology}, the weak convergence of $\mu_m$ to $\mu$ and the assumption on ${\cal M}_0$ imply that  $\int |z|\,\mu_m(dz)\to \int |z|\,\mu(dz)$. Together with $|a|<1$ and $a_m\to a$, this implies $S_1(m)\to 0$.

On the other hand, for any $h\in\N$ we have
\begin{eqnarray*}
    S_2(m)
    & \le & \Big|\int f\Big(\sum_{k=0}^\infty a^kz_{1-k}\,,\ldots,\sum_{k=0}^\infty a^kz_{n-k}\Big)\,\mu_m^{\otimes\Z}(d(z_j)_{j\in\Z})\\
    & & \quad-\int f\Big(\sum_{k=0}^h a^kz_{1-k}\,,\ldots,\sum_{k=0}^h a^kz_{n-k}\Big)\,\mu_m^{\otimes\Z}(d(z_j)_{j\in\Z})\Big|\\
    &  & +\,\Big|\int f\Big(\sum_{k=0}^h a^kz_{1-k}\,,\ldots,\sum_{k=0}^h a^kz_{n-k}\Big)\,\mu_m^{\otimes\Z}(d(z_j)_{j\in\Z})\\
    & & \quad-\int f\Big(\sum_{k=0}^h a^kz_{1-k}\,,\ldots,\sum_{k=0}^h a^kz_{n-k}\Big)\,\mu^{\otimes\Z}(d(z_j)_{j\in\Z})\Big|\\
    & & +\,\Big|\int f\Big(\sum_{k=0}^h a^kz_{1-k}\,,\ldots,\sum_{k=0}^h a^kz_{n-k}\Big)\,\mu^{\otimes\Z}(d(z_j)_{j\in\Z})\\
    & & \quad-\int f\Big(\sum_{k=0}^\infty a^kz_{1-k}\,,\ldots,\sum_{k=0}^\infty a^kz_{n-k}\Big)\,\mu^{\otimes\Z}(d(z_j)_{j\in\Z}\Big|\\
    & =: & S_{2,1}(m,h)\,+\,S_{2,2}(m,h)\,+\,S_{2,3}(h).
\end{eqnarray*}
Fix $\varepsilon>0$, and note that $\sup_{m\in\N}\int|z_1|\,\mu_m(dz_1)<\infty$ (because $\mu_m$ converges weakly to $\mu$ and ${\cal M}_0$ was assumed to be locally uniformly $\psi_1$-integrating). Choose $h_0=h_0(\varepsilon)\in\N$ such that $\sum_{k=h_0+1}^\infty|a|^k\le(Ln\,\sup_{m\in\N}\int|z_1|\,\mu_m(dz_1))^{-1}\varepsilon/3$. By the Lipschitz continuity of $f$, we obtain
\begin{eqnarray}
    S_{2,1}(m,h_0)
    & \le & \int L\sum_{i=1}^n\Big|\sum_{k=h_0+1}^\infty a^kz_{i-k}\Big|\,\mu_m^{\otimes\Z}(d(z_j)_{j\in\Z})\nonumber\\
    & \le & (Ln\int|z_1|\,\mu_m(dz_1)\Big)\sum_{k=h_0+1}^\infty|a|^k\nonumber\\
    & \le & \varepsilon/3\quad\mbox{ for all }m\in\N.\label{hampel-huber generalized - Yule Walker - Lemma 2 - PROOF 10}
\end{eqnarray}
Analogously we obtain
\begin{eqnarray}\label{hampel-huber generalized - Yule Walker - Lemma 2 - PROOF 20}
    S_{2,3}(h_0)\,\le\,\varepsilon/3.
\end{eqnarray}
Further, $\mu_m$ converges weakly to $\mu$ and therefore $\mu_m^{\otimes (n+h_0)}$ converges weakly to $\mu^{\otimes (n+h_0)}$. Since the mapping $(z_j)_{j\in\{-h_0+1,\ldots,0,\ldots,n\}}\mapsto f(\sum_{k=0}^{h_0} a^kz_{1-k},\ldots,\sum_{k=0}^{h_0} a^kz_{n-k})$ is bounded and continuous on $\R^{\{-h_0+1,\ldots,0,\ldots,n\}}$, it follows that there exists some $m_0=m_0(\varepsilon)\in\N$ such that
\begin{eqnarray}\label{hampel-huber generalized - Yule Walker - Lemma 2 - PROOF 30}
    S_{2,2}(m,h_0)\,\le\,\varepsilon/3\quad\mbox{ for all }m\ge m_0.
\end{eqnarray}
By (\ref{hampel-huber generalized - Yule Walker - Lemma 2 - PROOF 10})--(\ref{hampel-huber generalized - Yule Walker - Lemma 2 - PROOF 30}), we have $S_2(m)\le\varepsilon$ for all $m\ge m_0$. Thus, $S_2(m)\to 0$.

We have shown that $\ex^{(a_m,\mu_m)}[f(X_1,\ldots,X_n)]\to\ex^{(a,\mu)}[f(X_1,\ldots,X_n)]$ as $m\to\infty$ for every bounded and Lipschitz continuous $f:\R^n\to\R$. This completes the proof.
\end{proof}

\bigskip

\begin{proof}{\bf of Theorem \ref{hampel-huber generalized - Yule Walker}:}
The sequence $(\widehat T_n)$ is asymptotically $(d_\Theta,\rho_{\mbox{\scriptsize{\rm P}}})$-robust on $\Theta_0$, because conditions (a)--(b) of Theorem \ref{hampel-huber generalized} are satisfied for $\Upsilon:=(-1,1)$, $U(a,\mu):=T(a,\mu):=a$, $\widehat U_n:=\widehat T_n$ and $V_n(u):=u$ for all $n\in\N$. Indeed, condition (a) trivially holds, because $(a,\mu)\mapsto a$ is continuous. Further, let $\eta>0$. For every $(a,\mu)\in\Theta$ with $a=0$ we have $\pr^{(a,\mu)}[|\widehat T_n(X_1,\ldots,X_n)-T(a,\mu)|\ge\eta]=\pr^{(a,\mu)}[0\ge\eta]=0=\pr^{(a,\mu)}[0\ge\eta]=\pr^{(a,\mu)}[|\frac{1}{n}\sum_{i=1}^{n}X_i^2-\ex^{(a,\mu)}[X_1^2]|\ge\eta]$. For every $(a,\mu)\in\Theta$ with $a\not=0$ we obtain
\begin{eqnarray*}
    \lefteqn{\pr^{(a,\mu)}\big[\big|\widehat T_n(X_1,\ldots,X_n)-T(a,\mu)\big|\ge\eta\big]}\\
    & = & \pr^{(a,\mu)}\Big[\,\Big|\frac{\frac{1}{n-1}\sum_{i=1}^{n-1}X_iX_{i+1}}{\frac{1}{n}\sum_{i=1}^{n}X_i^2}-\frac{\ex^{(a,\mu)}[X_1X_2]}{\ex^{(a,\mu)}[X_1^2]}\Big|\ge\eta\,\Big|\,\sum_{i=1}^{n}X_i^2>0\Big]\nonumber\\
    & \le & \pr^{(a,\mu)}\Big[\frac{1}{\ex^{(a,\mu)}[X_1^2]}\,\Big|\frac{1}{n-1}\sum_{i=1}^{n-1}X_iX_{i+1}-\ex^{(a,\mu)}[X_1X_2]\Big|\ge\eta/2\,\Big|\,\sum_{i=1}^{n}X_i^2>0\Big]\nonumber\\
    & & +\,\pr^{(a,\mu)}\Big[\frac{\frac{1}{n-1}\sum_{i=1}^{n-1}|X_iX_{i+1}|}{\big(\frac{1}{n}\sum_{i=1}^{n}X_i^2\big)\,\ex^{(a,\mu)}[X_1^2]}\,
    \Big|\frac{1}{n}\sum_{i=1}^{n}X_i^2-\ex^{(a,\mu)}[X_1^2]\Big|\ge\eta/2\,\Big|\,\sum_{i=1}^{n}X_i^2>0\Big]\nonumber\\
    & \le & \pr^{(a,\mu)}\Big[\Big|\frac{1}{n-1}\sum_{i=1}^{n-1}X_iX_{i+1}-\ex^{(a,\mu)}[X_1X_2]\Big|\ge\ex^{(a,\mu)}[X_1^2]\,\eta/2\Big]\nonumber\\
    & & +\,\pr^{(a,\mu)}\Big[\Big|\frac{1}{n}\sum_{i=1}^{n}X_i^2-\ex^{(a,\mu)}[X_1^2]\Big|\ge\ex^{(a,\mu)}[X_1^2]\,\eta/4\Big]\nonumber,
\end{eqnarray*}
where for first and the last step we used $\pr^{(a,\mu)}[\sum_{i=1}^{n}X_i^2=0]=0$ (recall that $\mu$ is absolutely continuous w.r.t.\ the Lebesgue measure) and for the last step we used H\"older's inequality in the form of $\frac{1}{n-1}\sum_{i=1}^{n-1}|X_iX_{i+1}|\le\frac{n}{n-1}\frac{1}{n}\sum_{i=1}^{n}X_i^2\le\frac{2}{n}\sum_{i=1}^{n}X_i^2$. Since the mapping
$$
    (a,\mu)\longmapsto\ex^{(a,\mu)}[X_1^2]=\sum_{k=0}^\infty\sum_{m=0}^\infty a^{k+m}\int z_{1-k}\,z_{1-m}\,\mu^{\otimes\Z}(d(z_j)_{j\in\Z})
$$
is easily seen to be $(d_\Theta,|\cdot|)$-continuous on $\Theta_0$ (use Lemma \ref{weak and psi weak topology} and the assumption that ${\cal M}_0$ is locally uniformly $\psi_2$-integrating), it follows from $\ex^{(a,\mu)}[X_1^2]>0$ (for all $(a,\mu)\in\Theta=(-1,1)\times{\cal M}_1^{0,2}(\R)$ with $a\not=0$) and the first part of Lemma \ref{hampel-huber generalized - Yule Walker - Lemma} that condition (b) of Theorem \ref{hampel-huber generalized} holds too.

The sequence $(\widehat T_n)$ is also finite-sample $(d_\Theta,\rho_{\mbox{\scriptsize{\rm P}}})$-robust on $\Theta_0$. Indeed, by Lemma \ref{hampel-huber generalized - Yule Walker - Lemma 2} the mapping $(a,\mu)\mapsto\pr^{(a,\mu)}\circ(X_1,\ldots,X_n)^{-1}$ is $(d_\Theta,\rho_{n})$-continuous, where $\rho_{n}$ refers to any metric on ${\cal M}_1(\R^n)$ which metrizes the weak topology. Moreover, the mapping $(x_1,\ldots,x_n)\mapsto \widehat t_n(x_1,\ldots,x_n):=\widehat T_n(x_1,\ldots,x_n)$ is $(\|\cdot\|,d_\Sigma)$-continuous on $\R^n\setminus\{(0,\ldots,0)\}$. For every $\theta\in\Theta_0$ the probability measure $\pr^{\theta}\circ(X_1,\ldots,X_n)^{-1}$ is absolutely continuous w.r.t.\ the $n$-dimensional Lebesgue measure, because the elements of ${\cal M}_0$ were assumed to be absolute continuous w.r.t.\ the Lebesgue measure. It follows that for every $\theta\in\Theta_0$ the $({\cal B}(\R^n),{\cal S})$-measurable mapping $(x_1,\ldots,x_n)\mapsto \widehat t_n(x_1,\ldots,x_n)$ is continuous outside a $\pr^{\theta}\circ(X_1,\ldots,X_n)^{-1}$-null set. Then, setting $\Pi_n:=(X_1,\ldots,X_n)$, finite-sample $(d_\Theta,\rho_{\mbox{\scriptsize{\rm P}}})$-robustness of $(\widehat T_n)$ can be obtained as in the proof of Theorem \ref{hampel-huber generalized - finite sample} (Section \ref{proof of hampel-huber generalized - finite sample}), where one has to use the Continuous Mapping theorem in the form of Theorem 2.7 in \cite{Billingsley1999}. This completes the proof of Theorem \ref{hampel-huber generalized - Yule Walker}.
\end{proof}


\section{Proofs of the results of Section \ref{Sec Hampel Huber}}\label{Proofs of the results of Section Hampel Huber}

The proof of Theorem \ref{hampel-huber generalized} relies on Strassen's theorem. For the reader's convenience, we first of all recall Strassen's theorem as formulated in Theorem 2.4.7 in \cite{Huber1981}; the proof is contained in the seminal paper \cite{Strassen1965}. See also Theorem 11.6.2 and the succeeding remark in \cite{Dudley2002}.

\begin{theorem}{\em (Strassen)}\label{strassens theorem}
Let $(\Sigma,d_\Sigma)$ be a complete and separable metric space equipped with the corresponding Borel $\sigma$-field ${\cal S}$. Then, for any two probability measures $\mu_1,\mu_2$ on $(\Sigma,{\cal S})$ and any $\alpha,\beta>0$, the following two statements are equivalent:
\begin{itemize}
    \item[(i)] For every $A\in{\cal S}$ we have
    $$
        \mu_1[A]\,\le\,\mu_2[A^{\beta}]+\alpha,
    $$
    where $A^{\beta}:=\{s\in\Sigma\,:\,\inf_{a\in A}d_\Sigma(s,a)\le\beta\}$.
    \item[(ii)] There is some probability measure $\mu$ on $(\Sigma\times \Sigma,{\cal S}\otimes{\cal S})$ such that $\mu\circ\pi_1^{-1}=\mu_1$, $\mu\circ\pi_2^{-1}=\mu_2$, and
    $$
        \mu\big[\big\{(s_1,s_2)\in\Sigma\times\Sigma:\,d_\Sigma(s_1,s_2)\le\beta\big\}\big]\,\ge\,1-\alpha,
    $$
    where $\pi_i:\Sigma\times\Sigma\to\Sigma$ denotes the projection on the $i$-th coordinate, $i=1,2$.
\end{itemize}
\end{theorem}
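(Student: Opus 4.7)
The implication (ii)$\Rightarrow$(i) is the easy direction and I would dispatch it first. Given a coupling $\mu$ as in (ii), for any $A\in{\cal S}$, since $\{\pi_1\in A\}\cap\{d_\Sigma(\pi_1,\pi_2)\le\beta\}\subset\{\pi_2\in A^\beta\}$, one has
$$
   \mu_1[A]=\mu[\pi_1\in A]\le\mu[\pi_2\in A^\beta]+\mu[d_\Sigma(\pi_1,\pi_2)>\beta]\le\mu_2[A^\beta]+\alpha.
$$

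The substantive direction is (i)$\Rightarrow$(ii). My plan is the classical three-step route via discretization, a combinatorial coupling lemma, and a weak-compactness passage to the limit.

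\emph{Step 1 (Discretization).} Since $(\Sigma,d_\Sigma)$ is complete and separable, both $\mu_1$ and $\mu_2$ are tight. For each $\varepsilon>0$ I would choose a compact $K_\varepsilon\subset\Sigma$ with $\mu_i[K_\varepsilon^{\sf c}]\le\varepsilon$ for $i=1,2$, partition $K_\varepsilon$ into finitely many Borel cells $C_1,\ldots,C_N$ of diameter at most $\varepsilon$ with distinguished representatives $s_j\in C_j$, and define discrete approximants $\mu_i^\varepsilon:=\sum_j \mu_i[C_j]\,\delta_{s_j}+\mu_i[K_\varepsilon^{\sf c}]\,\delta_{s_0}$ (with $s_0$ an auxiliary point placed at $d_\Sigma$-distance $>\beta+1$ from all $s_j$, added only if needed to make total mass $1$). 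A short calculation shows that condition (i) for $(\mu_1,\mu_2,\alpha,\beta)$ implies the analogous condition for $(\mu_1^\varepsilon,\mu_2^\varepsilon,\alpha+2\varepsilon,\beta+2\varepsilon)$ on the finite support.

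\emph{Step 2 (Finite/combinatorial case).} In the finite case the required object $\mu$ is a nonnegative matrix $(\mu_{jk})$ whose row sums are $\mu_1^\varepsilon(\{s_j\})$, whose column sums are $\mu_2^\varepsilon(\{s_k\})$, and with $\sum_{(j,k):\,d_\Sigma(s_j,s_k)>\beta'}\mu_{jk}\le\alpha'$ (writing $\alpha':=\alpha+2\varepsilon$, $\beta':=\beta+2\varepsilon$). Existence of such a matrix is equivalent, by the max-flow/min-cut theorem applied to the bipartite capacitated network with source-row capacities $\mu_1^\varepsilon(\{s_j\})$, sink-column capacities $\mu_2^\varepsilon(\{s_k\})$ and a unit-capacity edge from $s_j$ to $s_k$ whenever $d_\Sigma(s_j,s_k)\le\beta'$, to a deficient Hall-type condition: for every $A\subset\{s_1,\ldots,s_N,s_0\}$,
$$
   \mu_1^\varepsilon[A]\le\mu_2^\varepsilon[A^{\beta'}]+\alpha'.
$$
This is precisely the condition produced in Step 1, so the finite coupling $\mu^\varepsilon$ exists.

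\emph{Step 3 (Passage to the limit).} The marginals of $\mu^\varepsilon$ converge weakly to $\mu_1$ and $\mu_2$ as $\varepsilon\downarrow 0$, hence the family $\{\mu^\varepsilon\}$ is tight on $\Sigma\times\Sigma$ by marginal tightness. Prokhorov's theorem yields a subsequential weak limit $\mu$, whose marginals must be $\mu_1$ and $\mu_2$. Since the set $F:=\{(s_1,s_2):d_\Sigma(s_1,s_2)\le\beta\}$ is closed and $\{d_\Sigma\le\beta'\}\downarrow F$ as $\varepsilon\downarrow 0$, the Portmanteau theorem together with $\mu^\varepsilon[\{d_\Sigma\le\beta'\}]\ge 1-\alpha'$ gives $\mu[F]\ge 1-\alpha$, which is (ii).

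\emph{Main obstacle.} The principal work lies in Step~2: the defect Hall / max-flow min-cut argument in the finite case is the combinatorial heart of Strassen's theorem and requires the careful network construction above. A secondary technical point is arranging the discretization in Step~1 so that the approximate version of (i) holds with losses controlled by $\varepsilon$; one has to verify that enlarging $A$ to $A^\beta$ and moving mass to cell representatives at distance $\le\varepsilon$ degrades the inequality only by $O(\varepsilon)$ in both $\alpha$ and $\beta$, which then disappears in the weak limit thanks to the closedness of $F$.
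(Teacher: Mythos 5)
The paper does not prove this theorem at all: it is quoted verbatim from the literature (Huber, Theorem 2.4.7), with the proof attributed to Strassen's original paper and to Dudley, Theorem 11.6.2. So there is no "paper proof" to compare against; what you have written is, in outline, exactly the classical argument that appears in the cited sources (Dudley's proof runs through the same three stages: reduction to a finite problem, a max-flow/min-cut or defect-Hall argument for the discrete coupling, and a tightness/Prokhorov passage to the limit). Your (ii)$\Rightarrow$(i) direction is complete as written, and that is incidentally the only direction the paper actually uses (in Step 1 of the proof of Theorem \ref{hampel-huber generalized}).

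Two small points if you were to write Step 2 out in full. First, the edges from $s_j$ to $s_k$ with $d_\Sigma(s_j,s_k)\le\beta'$ should carry unbounded (or at least $\ge 1$) capacity rather than literally "unit" capacity in a normalized sense; what matters is that they never form the bottleneck, so that the min cut is computed over sets of rows and their $\beta'$-neighborhoods of columns. Second, max-flow/min-cut under the deficient Hall condition gives you a sub-probability coupling of total mass $\ge 1-\alpha'$ supported on $\{d_\Sigma\le\beta'\}$; you still need one sentence saying that the residual row and column masses (each totaling $\le\alpha'$) can be paired off arbitrarily to complete $\mu^\varepsilon$ to a genuine coupling, with the defect $\mu^\varepsilon[\{d_\Sigma>\beta'\}]\le\alpha'$ preserved. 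With those details supplied, Steps 1 and 3 are correct as you have argued them (in Step 3, fix $\varepsilon_0$, apply Portmanteau to the closed set $\{d_\Sigma\le\beta+2\varepsilon_0\}$ along the subsequence, then let $\varepsilon_0\downarrow 0$ and use continuity from above to land on $F$).
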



\subsection{Proof of Theorem \ref{hampel-huber generalized} and Remark \ref{hampel-huber generalized - remark on b}}\label{proof of hampel-huber generalized}

We will adapt arguments of the proof of Theorem 2.21 in \cite{Huber1981}. We have to show that for every $\theta_1\in\Theta_0$ and $\varepsilon>0$ there are some $\delta>0$ and $n_0\in\N$ such that
\begin{equation}\label{proof of hampel-huber generalized - eq - 1}
    \theta_2\in\Theta_0,\quad d_\Theta(\theta_1,\theta_2)\le\delta\quad\Longrightarrow\quad \rho_{\mbox{\scriptsize{\rm P}}}(\pr^{\theta_1}\circ \widehat T_n^{-1},\pr^{\theta_2}\circ \widehat T_n^{-1})\le\varepsilon \quad\mbox{for all }n\ge n_0.
\end{equation}
Since
$$
    \rho_{\mbox{\scriptsize{\rm P}}}(\pr^{\theta_1}\circ \widehat T_n^{-1},\pr^{\theta_2}\circ \widehat T_n^{-1})\,\le\,
    \rho_{\mbox{\scriptsize{\rm P}}}(\pr^{\theta_1}\circ \widehat T_n^{-1},\delta_{T_n(\theta_1)})\,+\,
    \rho_{\mbox{\scriptsize{\rm P}}}(\delta_{T_n(\theta_1)},\pr^{\theta_2}\circ \widehat T_n^{-1})
$$
(with $\delta_{T_n(\theta_1)}$ the dirac measure on $(\Sigma,{\cal S})$ with atom $T_n(\theta_1)$), for (\ref{proof of hampel-huber generalized - eq - 1}) it suffices to show that for every $\theta_1\in\Theta_0$ and $\varepsilon>0$ there are some $\delta>0$ and $n_0\in\N$ such that
\begin{equation}\label{proof of hampel-huber generalized - eq - 2}
    \theta_2\in\Theta_0,\quad d_\Theta(\theta_1,\theta_2)\le\delta\quad\Longrightarrow\quad \rho_{\mbox{\scriptsize{\rm P}}}(\delta_{T_n(\theta_1)},\pr^{\theta_2}\circ \widehat T_n^{-1})\le\varepsilon/2 \quad\mbox{for all }n\ge n_0.
\end{equation}
The remainder of the proof is divided into two steps. In Step 1, we will verify that for (\ref{proof of hampel-huber generalized - eq - 2}) it suffices to show that for every $\theta_1\in\Theta_0$ and $\varepsilon>0$ there are some $\delta>0$ and $n_0\in\N$ such that
\begin{eqnarray}
    \lefteqn{\theta_2\in\Theta_0,\quad d_\Theta(\theta_1,\theta_2)\le\delta}\nonumber\\
    & & \Longrightarrow\quad \pr^{\theta_2}\big[\big\{\omega\in\Omega:\,d_\Sigma(T_n(\theta_1),\widehat T_n(\omega))\le\varepsilon/2\big\}\big]\ge 1-\varepsilon/2 \quad\mbox{for all }n\ge n_0.\label{proof of hampel-huber generalized - eq - 3}
\end{eqnarray}
In Step 2, we will verify (\ref{proof of hampel-huber generalized - eq - 3}).

{\em Step 1.} Note that the right-hand side in (\ref{proof of hampel-huber generalized - eq - 3}) is equivalent to
\begin{equation}\label{proof of hampel-huber generalized - eq - 3 - 5}
    \big(\delta_{T_n(\theta_1)}\times(\pr^{\theta_2}\circ \widehat T_n^{-1})\big)\big[\big\{(s_1,s_2)\in\Sigma\times\Sigma:\,d_\Sigma(s_1,s_2)\le\varepsilon/2\big\}\big]\ge 1-\varepsilon/2\quad\mbox{for all }n\ge n_0.
\end{equation}
By the implication (ii)$\Rightarrow$(i) in Strassen's theorem \ref{strassens theorem} (with $\mu:=\delta_{T_n(\theta_1)}\times(\pr^{\theta_2}\circ \widehat T_n^{-1})$ and $\alpha:=\beta:=\varepsilon/2$), condition (\ref{proof of hampel-huber generalized - eq - 3 - 5}) implies
$$
    \delta_{T_n(\theta_1)}[A]\,\le\,\pr^{\theta_2}\circ \widehat T_n^{-1}[A^{\varepsilon/2}]+\varepsilon/2\quad\mbox{ for all }A\in{\cal S},\mbox{ for all }n\ge n_0,
$$
that is,
$$
    \rho_{\mbox{\scriptsize{\rm P}}}(\delta_{T_n(\theta_1)},\pr^{\theta_2}\circ \widehat T_n^{-1})\,\le\,\varepsilon/2\quad\mbox{for all }n\ge n_0.
$$
Thus, the right-hand side in (\ref{proof of hampel-huber generalized - eq - 3}) implies the right-hand side in (\ref{proof of hampel-huber generalized - eq - 2}).

{\em Step 2.} To verify (\ref{proof of hampel-huber generalized - eq - 3}), fix $\theta_1\in\Theta_0$ and $\varepsilon>0$. By the $(d_\Theta,d_\Sigma)$-continuity of $U|_{\Theta_0}$ at $\theta_1$, we can find for every $\eta>0$ some $\delta'=\delta'(\eta)>0$ such that for every $\theta_2\in\Theta_0$,
\begin{equation}\label{proof of hampel-huber generalized - eq - 4}
    d_\Theta(\theta_1,\theta_2)\le \delta'\quad\Longrightarrow\quad d_\Upsilon(U(\theta_1),U(\theta_2))\le\eta.
\end{equation}
In particular, for any $\theta_2\in\Theta_0$ satisfying $d_\Theta(\theta_1,\theta_2)\le\delta'$ we have
\begin{eqnarray}
    d_\Upsilon(U(\theta_1),\widehat U_n(\cdot))
    & \le &  d_\Upsilon(U(\theta_1),U(\theta_2))\,+\,d_\Upsilon(U(\theta_2),\widehat U_n(\cdot))\nonumber\\
    & \le &  \eta\,+\,d_\Upsilon(U(\theta_2),\widehat U_n(\cdot)).\label{proof of hampel-huber generalized - eq - 5}
\end{eqnarray}
Further, due to assumption (b) we can find some $\delta''=\delta''(\eta)>0$ and $n_0'=n_0'(\eta)\in\N$ such that for all $n\ge n_0'$ and $\theta_2\in\Theta_0$ satisfying $d_\Theta(\theta_1,\theta_2)\le\delta''$,
\begin{equation}\label{proof of hampel-huber generalized - eq - 6}
    \pr^{\theta_2}\big[\big\{\omega\in\Omega:\,d_\Upsilon(\widehat U_n(\omega),U(\theta_2))\le\eta\big\}\big]\,\ge\,1-\varepsilon/2.
\end{equation}
By (\ref{proof of hampel-huber generalized - eq - 5}), the left-hand side in (\ref{proof of hampel-huber generalized - eq - 6}) is bounded above by $\pr^{\theta_2}[d_\Upsilon(\widehat U_n,U(\theta_1))\le 2\eta]$ for all $\theta_2\in\Theta_0$ satisfying $d_\Theta(\theta_1,\theta_2)\le\delta=\delta(\eta):=\min\{\delta',\delta''\}=\min\{\delta'(\eta),\delta''(\eta)\}$. That is, for all $n\ge n_0'$ and $\theta_2\in\Theta_0$ satisfying $d(\theta_1,\theta_2)\le\delta$,
\begin{equation}\label{proof of hampel-huber generalized - eq - 7}
    \pr^{\theta_2}\big[\big\{\omega\in\Omega:\,d_\Upsilon(\widehat U_n(\omega),U(\theta_1))\le 2\eta\big\}\big]\,\ge\, 1-\varepsilon/2.
\end{equation}
In the case where $\Upsilon=\Sigma$ and $V_n(u)=u$, $n\in\N$, we can choose $\eta=\eta(\varepsilon):=\varepsilon/4$ to obtain (\ref{proof of hampel-huber generalized - eq - 3}). This justifies Remark \ref{hampel-huber generalized - remark on b}. In the general case, we can conclude (\ref{proof of hampel-huber generalized - eq - 3}) as follows. By the asymptotic $(d_\Upsilon,d_\Sigma)$-continuity of $(V_n)$ we can find some $\eta=\eta(\varepsilon)$ and $n_0''=n_0''(\varepsilon)$ such that $d_\Upsilon(u,U(\theta_1))\le 2\eta$ implies $d_\Sigma(V_n(u),V_n(U(\theta_1)))\le\varepsilon/2$ for all $u\in\Upsilon$ and $n\ge n_0''$. Together with (\ref{proof of hampel-huber generalized - eq - 7}) and the representations $T_n=V_n\circ U$ and $\widehat T_n=V_n\circ\widehat U_n$, this implies (\ref{proof of hampel-huber generalized - eq - 3}) with $n_0=n_0(\varepsilon):=\max\{n_0'(\eta),n_0''\}$ and $\delta=\delta(\varepsilon):=\delta(\eta)$. This completes the proof of Theorem \ref{hampel-huber generalized}.{\hspace*{\fill}$\Box$\par\bigskip}


\subsection{Proof of Theorem \ref{hampel-huber generalized - reversed}}\label{proof of hampel-huber generalized - reversed}

Let $\theta_1\in\Theta_0$ and $\varepsilon>0$ be fixed. By the triangular inequality, we have for every $\theta_2\in\Theta_0$
\begin{eqnarray*}
    \lefteqn{\min\{d_\Sigma(T_0(\theta_1),T_0(\theta_2));1\} }\\
    & = & \rho(\delta_{T_0(\theta_1)},\delta_{T_0(\theta_2)})\\
    & \le & \rho(\delta_{T_0(\theta_1)},\pr^{\theta_1}\circ \widehat T_n^{-1})+\rho(\pr^{\theta_1}\circ \widehat T_n^{-1},\pr^{\theta_2}\circ \widehat T_n^{-1})+  \rho(\pr^{\theta_2}\circ \widehat T_n^{-1},\delta_{T_0(\theta_2)}).
\end{eqnarray*}
By assumption ($\alpha$), we may choose $\delta>0$ and $n_0\in\N$ such that the second summand is bounded above by $\varepsilon/3$ for all $n\ge n_0$ and $\theta_2\in\Theta_0$ with $d_\Theta(\theta_1,\theta_2)\le\delta$. Since $\rho$ was assumed to metrize the weak topology, condition ($\beta$) implies that the third summand converges to $0$ as $n\to\infty$ for every $\theta_2\in\Theta_0$. That is, for every $\theta_2\in\Theta_0$ and sufficiently large $n=n(\theta_2)\ge n_0$ the third summand is bounded above by $\varepsilon/3$ too. The same argument shows that for sufficiently large $n=n(\theta_1)\ge n_0$ the first summand is bounded above by $\varepsilon/3$. Hence, we have found some $\delta>0$ for which $\min\{d_\Sigma(T_0(\theta_1),T_0(\theta_2));1\}\le\varepsilon$ holds for every $\theta_2\in\Theta_0$ with $d(\theta_1,\theta_2)\le\delta$. This finishes the proof.{\hspace*{\fill}$\Box$\par\bigskip}


\subsection{Proof of Theorem \ref{hampel-huber generalized - finite sample}}\label{proof of hampel-huber generalized - finite sample}

We will adapt arguments of the proof of Theorem 2 in \cite{Cuevas1988}. Finite-sample robustness of $(\widehat T_n)$ means that for every $\theta_1\in\Theta_0$, $\varepsilon>0$, and $n\in\N$ there is some $\delta>0$ such that $\rho(\pr^{\theta_1}\circ\widehat T_n^{-1},\pr^{\theta_2}\circ\widehat T_n^{-1})\le\varepsilon$ for every $\theta_2\in\Theta_0$ with $d_\Theta(\theta_1,\theta_2)\le\delta$. That is, we have to show that the mapping $\Theta_0\ni\theta\mapsto\pr^{\theta}\circ\widehat T_n^{-1}$ is weakly continuous at every $\theta_1\in\Theta_0$. It suffices to show that this mapping is sequentially weakly continuous at every $\theta_1\in\Theta_0$. Let $\theta_1\in\Theta_0$ and $(\theta_{2,k})\subset\Theta_0$ be any sequence such that $\lim_{k\to\infty}d_\Theta(\theta_1,\theta_{2,k})=0$. By assumption (d) we then have that $\pr^{\theta_{k,2}}\circ\Pi_n^{-1}$ converges weakly to $\pr^{\theta_{1}}\circ\Pi_n^{-1}$ as $k\to\infty$. It follows by assumption (c) and the Continuous Mapping theorem that $(\pr^{\theta_{k,2}}\circ\Pi_n^{-1})\circ\widehat t_n^{-1}=\pr^{\theta_{k,2}}\circ\widehat T_n^{-1}$ converges weakly to $(\pr^{\theta_{1}}\circ\Pi_n^{-1})\circ\widehat t_n^{-1}=\pr^{\theta_{1}}\circ\widehat T_n^{-1}$ as $k\to\infty$. This completes the proof of Theorem \ref{hampel-huber generalized - finite sample}.
{\hspace*{\fill}$\Box$\par\bigskip}


%
%




\begin{thebibliography}{0}
    \bibitem{Bauer2001} Bauer, H. (2001). {\it Measure and integration theory}. De Gruyter, Berlin.
    \bibitem{Billingsley1999} Billingsley, P. (1999). {\it Convergence of probability measures}, second edition. Wiley, New York.
    \bibitem{Boenteatal1987} Boente, G., Fraiman, R. and Yohai, V.J. (1987). Qualitative robustness for stochastic processes. {\it Annals of Statistics}, 15, 1293--1312.
    \bibitem{Bradley2005} Bradley, R.C. (2005). Basic properties of strong mixing conditions. A survey and some open problems. {\it Probability Surveys}, 2, 107--144.
    \bibitem{BrockwellDavis2006} Brockwell, P.J. and Davis, R.A. (2006). {\it Time series: theory and methods}, Springer-Verlag, New York.
    \bibitem{Bustos1981} Bustos, O.H. (1981). Qualitative robustness for general processes. {\it Informes de Matematica, Serie B-002/81}, Institute de Matematica Pura e Aplicada, Rio de Janeiro.
    \bibitem{Contetal2010} Cont, R., Deguest, R. and Scandolo, G. (2010). {\it Robustness and sensitivity analysis of risk measurement procedures.} Quantitative Finance, 10, 593--606.
    \bibitem{Cox1981} Cox, D. (1981). Metrics on stochastic processes and qualitative robustness. Technical Report 3, {\it Department of Statistics, University of Washington}.
    \bibitem{Cuevas1988} Cuevas, A. (1988). Qualitative robustness in abstract inference. {\it Journal of Statistical Planning and Inference}, 18, 277--289.
    \bibitem{Doukhan1994} Doukhan, P. (1994). Mixing: properties and examples. {\it Lecture Notes in Statistics}, Springer-Verlag, New York.
    \bibitem{Dudley2002} Dudley, R.M. (2002). {\it Real analysis and probability}. Cambridge University Press, Cambridge.
    \bibitem{FoellmerSchied2011} F\"ollmer, H. and Schied, A. (2011). {\it Stochastic finance. An introduction in discrete time}. De Gruyter, Berlin.
    \bibitem{HableChristmann2011} Hable, R. and Christmann, A. (2011). On qualitative robustness of support vector machines. {\it Journal of Multivariate Analysis}, 102, 993--1007.
    \bibitem{Hampel1971} Hampel, F.R. (1971). A general qualitative definition of robustness. {\it Annals of Mathematical Statistics}, 42, 1887--1896.
    \bibitem{Huber1981} Huber, P.J. (1981). {\it Robust statistics}. Wiley, New York.
    \bibitem{Kraetschmeretal2012} Kr\"atschmer, V., Schied, A. and Z\"ahle, H. (2012). Qualitative and infinitesimal robustness of tail-dependent statistical functionals. {\it Journal of Multivariate Analysis}, 103, 35--47.
    \bibitem{Kraetschmeretal2014} Kr\"atschmer, V., Schied, A. and Z\"ahle, H. (2014). Comparative and qualitative robustness for law-invariant risk measures. {\it Finance and Stochastics}, 18, 271--295.
    \bibitem{LauerZaehle2015} Lauer, A. and Z{\"a}hle, H. (2015). Nonparametric estimation of risk measures of collective risks and a nonuniform Berry--Esséen inequality. {\it Submitted} (arXiv:1504.02693).
    \bibitem{Mizera2010} Mizera, I. (2010). Qualitative robustness and weak continuity: the extreme unction. {\it In:} J.~Antoch, M.~Hu{\v s}kova, P.K.~Sen (Eds.), IMS Collections Festschrift in honor of Professor Jana Jure{\v c}kov{\'a}: {\it Nonparametrics and robustness in modern statistical inference and time series analysis}. Institute of Mathematical Statistics, 169--181.
    \bibitem{PapantoniKazakosGray1979} Papantoni-Kazakos, P. and Gray, R.M. (1979). Robustness of estimators on stationary observations. {\it Annals of Probability}, 7, 989--1002.
    \bibitem{Rio1995} Rio, E. (1995). A maximal inequality and dependent Marcinkiewicz--Zygmund strong laws. {\it Annals of Probability}, 23, 918--937.
    \bibitem{Rosenblatt1956} Rosenblatt, M. (1956). A central limit theorem and a mixing condition. {\it Proceedings of the National Academy of Sciences of the USA}, 42, 412--413.
    \bibitem{Strassen1965} Strassen, V. (1965). The existence of probability measures with given marignals. {\it Annals of Mathematical Statistics}, 36, 423--439.
    \bibitem{StrohrieglHable2014} Strohriegl, K. and Hable, R. (2014). Qualitative robustness of estimators on stochastic processes. {\it Submitted}.
    \bibitem{Wijsman1973} Wijsman, R.A. (1973). On the attainment of the Cramér--Rao lower bound. {\it Annals of Statistics}, 1, 538--542.
    \bibitem{Zaehle2014b} Z{\"a}hle, H. (2015). Qualitative robustness of statistical functionals under strong mixing. {\it Bernoulli}, 21, 1412--1434.
\end{thebibliography}
\end{document}